\documentclass[11pt,a4paper]{article}

\usepackage[margin=2.7cm]{geometry}
\usepackage[T1]{fontenc}
\usepackage[utf8]{inputenc}
\usepackage{lmodern}
\usepackage{amsmath,amssymb,amsthm,mathtools}
\usepackage{microtype}
\usepackage{enumitem}
\usepackage{xcolor}
\usepackage[hidelinks]{hyperref}
\usepackage[nameinlink,capitalise]{cleveref}

\newtheorem{theorem}{Theorem}[section]
\newtheorem{lemma}[theorem]{Lemma}
\newtheorem{proposition}[theorem]{Proposition}
\newtheorem{corollary}[theorem]{Corollary}
\theoremstyle{definition}

\newtheorem{remark}[theorem]{Remark}

\newcommand{\C}{\mathbb C}

\newcommand{\T}{\mathbb T}
\newcommand{\supp}{\operatorname{supp}}

\newcommand{\tr}{\operatorname{tr}}

\newcommand{\op}{\mathrm{op}}

\newcommand{\BH}{\mathrm{BH}}

\title{Intrinsic Bohnenblust--Hille Inequalities for Local Qudit Systems}
\author{Andreas Defant \and Daniel Galicer}
\date{}

\begin{document}

\maketitle

\begin{abstract}
We study dimension-free Bohnenblust--Hille inequalities for local operators on
systems of $K$-level qudits. 
For the operator space of support at most $d$, uniformly over all tensor-product orthonormal operator bases, we prove a Bohnenblust--Hille inequality with the optimal exponent $2d/(d+1)$ whose asymptotic exponential base in the interaction order is of order $O(\sqrt K)$.
Our proof is intrinsic, based on a one-site
scalarization and block-transversal decoupling, and does not rely on a
scalarization to the cyclic group or on a Remez-type argument. We complement the upper bound by showing that the asymptotic exponential
Bohnenblust--Hille base satisfies
\(
cK^{1/4}\le \beta(K)\le C\sqrt K
\)
with universal constants \(c,C>0\).
In this sense, the present work may be viewed as continuing the line of work
of Slote--Volberg--Zhang from a complementary intrinsic viewpoint. We also
revisit their Gell--Mann and Heisenberg--Weyl scalarization procedures. In the
Heisenberg--Weyl setting, the prime-dimensional case already yields the
optimal interaction exponent, whereas for composite $K$ the scalar total
degree used in their reduction leads to a larger exponent. 
A support-sensitive formulation recovers the optimal interaction exponent for every $K$, while the intrinsic argument gives the stronger dependence on the local dimension at the level of the asymptotic exponential base.
As further consequences, we determine the sharp scale, up to factors exponential in $d$ with base depending only on $K$, of coefficient $\ell_1$-normalization and
unconditionality on the exact-support spaces. This yields dimension-free
coefficient sparsification, including sparse generalized-Pauli approximation
in Heisenberg--Weyl coordinates, as well as normalization and query bounds for
canonical LCU/qubitization constructions.
\end{abstract}

\tableofcontents

\section{Introduction}\label{sec:introduction}

Bohnenblust--Hille inequalities control coefficient norms of bounded
low-complexity functions by the uniform norm, with constants independent of
the ambient dimension. On the Boolean cube, the optimal exponent for degree
\(d\) is \(2d/(d+1)\), and the corresponding constants grow subexponentially
in \(d\); see \cite{DMPBoolean} and, for a recent refinement in the cyclic
setting, \cite{DGMMM-support,PR}. For the historical background of the
Bohnenblust--Hille inequality, and in particular for the renewed interest in
the subject initiated by \cite{DFOSO} and further developed in
\cite{BPS}, we refer to \cite{DGMS}.

Quantum versions of this principle were introduced in connection with the analysis and
learning of local observables \cite{HCP,VZnoncomm}. For systems of qudits,
a sequence of works by Becker, Klein, Slote, Volberg, and Zhang developed
dimension-free estimates through reductions of Gell--Mann and
Heisenberg--Weyl expansions to commutative Boolean and cyclic models; see
\cite{BSVZnormdesign,KSVZ,SVZcyclic,SVZqudit,VZnoncomm}.
Related recent
developments include \cite{Slote2026,SloteVolberg2026,SVZtightness}.

The noncommutative setting exhibits a genuinely different behavior in the
interaction order. For qubits, Volberg--Zhang proved an exponential upper
bound, which was subsequently improved by Becker--Slote--Volberg--Zhang to
an exponential base of \(\sqrt{3}\), up to the subexponential Boolean
Bohnenblust--Hille factor. On the other hand, a recent result of Slote shows
that exponential growth in \(d\) is unavoidable already for \(K=2\).
Thus, for general local dimension \(K\), the natural question is not whether
exponential growth can be avoided, but rather how its exponential base
depends on \(K\).

In this paper we study this question for the intrinsic $d$-local operator
space, without fixing a preferred local coordinate system. We write
$M_K^0=\{A\in M_K(\mathbb C):\operatorname{tr}A=0\}$ and denote by
$\mathcal H_{\le d,K}(n)$ the subspace of $M_K(\mathbb C)^{\otimes n}$
consisting of operators whose tensor expansion involves at most $d$
nonidentity sites. We also write $\mathcal H_{d,K}(n)$ for the corresponding
exact-support space, consisting of operators whose tensor expansion involves
exactly $d$ nonidentity sites. If
$\mathcal B=\{I,B_1,\ldots,B_{K^2-1}\}$ is any orthonormal basis for the
normalized Hilbert--Schmidt inner product, its tensor powers give a coefficient
sequence $\widehat A_{\mathcal B}$. 

Our main result is a dimension-free Bohnenblust--Hille inequality at the
optimal exponent $2d/(d+1)$, uniform in the number of sites $n$ and in
the choice of the tensor-product Hilbert--Schmidt orthonormal basis.
Namely,
$$
\|\widehat A_{\mathcal B}\|_{\ell_{p_d}}
\le
K(C\sqrt K)^d
\|A\|_{\rm op}
\qquad
A\in\mathcal H_{\leq d,K}(n), 
$$
where $C>0$ is an universal constant. For the exact-support spaces
$\mathcal H_{d,K}(n)$, let $C_{d,K}$ denote the optimal dimension-free
constant and set
\[
 \beta(K):=\limsup_{d\to\infty} C_{d,K}^{1/d}.
\]
Our upper estimate gives $\beta(K)\le C\sqrt K$, while a complementary
Slote-type construction yields
\[
 cK^{1/4}\le \beta(K)\le C\sqrt K
\]
with universal constants $c,C>0$. Thus the dependence of the exponential
base on the local dimension is polynomially constrained from both sides.

The proof of the upper bound is intrinsic to the decomposition
$M_K=\mathbb CI\oplus M_K^0$. Its basic ingredients are a one-site
scalarization, whose relevant norm grows only like $\sqrt K$, and a
block-transversal decoupling of the physical sites. On the transversal part,
the scalarization tensorizes and can be combined with the Blei inequality and
the complex Khinchin--Steinhaus inequality. In particular, the argument does
not pass through a scalarization to the cyclic group and does not require a
Remez-type inequality. 
This is what allows us to keep the optimal interaction exponent while obtaining an $O(\sqrt K)$ bound for the asymptotic exponential base.

The present work may be viewed as continuing the line of research of
Slote--Volberg--Zhang from a complementary intrinsic viewpoint. We therefore
also revisit their Gell--Mann and Heisenberg--Weyl scalarization procedures
\cite{SVZqudit}. Their arguments provide natural benchmarks for the general
qudit problem. 
In the Heisenberg--Weyl setting, the prime-dimensional case
already gives the optimal interaction exponent. For composite $K$, however,
the scalar total degree used in their reduction may increase from $d$ to
$(K-1)d$, and consequently the resulting Bohnenblust--Hille exponent is
larger than $2d/(d+1)$. A support-sensitive formulation recovers the optimal interaction exponent for every $K$, while the intrinsic argument gives the stronger dependence on the local dimension at the level of the asymptotic exponential base.

There is also a useful basis-independence principle behind this comparison.
At the Bohnenblust--Hille exponent, changing between two normalized local
Hilbert--Schmidt orthonormal bases entails only a one-site loss, independent
of the interaction order at the level of the exponential base. Consequently,
bounds obtained in the Gell--Mann or Heisenberg--Weyl coordinates can be
transferred to arbitrary tensor-product operator bases. Together with the
support-sensitive cyclic Bohnenblust--Hille inequality of
\cite{DGMMM-support} and the recent subexponential estimate of
Pellegrino--Raposo \cite{PR}, this also clarifies what can be obtained from the
existing scalarization procedures in composite dimensions. 
These arguments are useful for comparison, while the direct intrinsic proof gives the substantially stronger $O(\sqrt K)$ bound for the asymptotic exponential base.

Beyond the Bohnenblust--Hille inequality itself, we study two related
coefficient-geometric quantities. For the exact $d$-support space, the
coefficient $\ell_1$ normalization and the unconditional basis constant have,
up to factors depending exponentially only on $d$ and $K$, the sharp scale
$(n/d)^{(d-1)/2}$. The lower bound follows from a basis-adapted commutative
corner and the corresponding Hamming-scheme estimates from
\cite{DGMMM-support}. This coefficient geometry also has two natural quantum
consequences. First, bounded local operators admit dimension-free sparse
tensor-product approximations in normalized Hilbert--Schmidt norm,
including sparse generalized-Pauli approximations in
Heisenberg--Weyl coordinates. Second, in unitary
tensor-product coordinates the coefficient $\ell_1$ norm is exactly the
normalization parameter of the canonical linear-combination-of-unitaries
representation, leading to PREPARE/SELECT block encodings and the corresponding
qubitization bounds \cite{ChildsWiebe,LowChuang}. These are statements about
the canonical unitary-basis encoding, rather than lower bounds for arbitrary
Hamiltonian-simulation algorithms.

The paper is organized as follows. Section~2 introduces the commutative and
operator notions of support and degree. Section~3 proves the intrinsic
Bohnenblust--Hille inequality, first for exact support and then for the full
$d$-local filtration, and establishes the lower bound for the exponential
base. Section~4 discusses the Gell--Mann and Heisenberg--Weyl bases, the
change-of-coordinates principle, and the relation with the scalarization
procedures of \cite{SVZqudit}. Section~5 develops the coefficient
$\ell_1$ geometry and records its consequences for coefficient sparsification
and canonical LCU/qubitization constructions.
\section{Commutative and noncommutative low-complexity spaces}

This section fixes the two notions of complexity that will be used below.  Support
degree counts active coordinates and is intrinsic to the product decomposition;
total degree also records the local frequencies carried by those coordinates.  We
first make this distinction on $C_K^n$, then pass to $M_K(\mathbb C)^{\otimes n}$.
\subsection{Commutative support and total-degree spaces}
\label{sec:Commutative support and total-degree spaces}

Fix $K\ge2$, let $\zeta=e^{2\pi i/K}$ and
$C_K=\{1,\zeta,\ldots,\zeta^{K-1}\}$, and identify $\widehat{C_K^n}$ with
$\mathbb Z_K^n$.  For $m=(m_1,\ldots,m_n)\in\mathbb Z_K^n$, write
$\chi_m(x)=x_1^{m_1}\cdots x_n^{m_n}$.  We use
$s(m)=|\supp(m)|$, with $\supp(m)=\{j:m_j\ne0\}$, for the support degree and
$|m|=m_1+\cdots+m_n$ for the ordinary total degree.

The exact and cumulative support spaces are
\[
 B_d(C_K^n):=\operatorname{span}\{\chi_m:s(m)=d\},
 \qquad
 B_{\le d}(C_K^n):=\operatorname{span}\{\chi_m:s(m)\le d\}
 =\bigoplus_{r=0}^d B_r(C_K^n),
\]
whereas the corresponding total-degree spaces are
\[
 P_d(C_K^n):=\operatorname{span}\{\chi_m:|m|=d\},
 \qquad
 P_{\le d}(C_K^n):=\operatorname{span}\{\chi_m:|m|\le d\}
 =\bigoplus_{r=0}^dP_r(C_K^n).
\]
These are subspaces of $C(C_K^n)$ with the supremum norm.  Since every nonzero
coordinate contributes at least one unit to $|m|$, one has
$s(m)\le |m|$ and therefore
\[
 P_{\le d}(C_K^n)\subset B_{\le d}(C_K^n).
\]
At exact total degree, several support levels may occur:
\begin{equation}\label{eq:P-support-decomposition}
 P_d(C_K^n)
 =
 \bigoplus_{r=0}^d\bigl(P_d(C_K^n)\cap B_r(C_K^n)\bigr).
\end{equation}
For $K=2$ support and total degree coincide; for $K\ge3$ they are genuinely
different.

The support decomposition can also be written without characters.  Let
$\mathbb E_K$ denote averaging over $C_K$, and let $C_0(C_K)$ be its kernel.
Then $C(C_K)=\mathbb C1\oplus C_0(C_K)$, and the mean-zero space is spanned
by the nontrivial characters.  For $S\subset[n]$ set
\[
 B_S(C_K^n)
 :=
 \left(\bigotimes_{j\in S}C_0(C_K)\right)
 \otimes
 \left(\bigotimes_{j\notin S}\mathbb C1\right).
\]
Then $B_d(C_K^n)=\bigoplus_{|S|=d}B_S(C_K^n)$.  Thus an inactive coordinate
contributes a constant, while an active coordinate contributes a mean-zero local
function.  This is the model for the noncommutative definition below.

\subsection{Exact $d$-support and $d$-local operator spaces}

The canonical local decomposition is
$M_K(\mathbb C)=\mathbb CI\oplus M_K^0$, where
\[
 M_K^0=\{A\in M_K(\mathbb C):\tr A=0\}.
\]
For $S\subset[n]$ define
\[
 \mathcal H_S
 :=
 \left(\bigotimes_{j\in S}M_K^0\right)
 \otimes
 \left(\bigotimes_{j\notin S}\mathbb CI\right)
 \subset M_K(\mathbb C)^{\otimes n}.
\]
The \emph{exact $d$-support operator space} and the \emph{$d$-local operator space} are
\[
 \mathcal H_{d,K}(n):=\bigoplus_{|S|=d}\mathcal H_S,
 \qquad
 \mathcal H_{\le d,K}(n):=\bigoplus_{|S|\le d}\mathcal H_S
 =\bigoplus_{r=0}^d\mathcal H_{r,K}(n).
\]

They carry the ambient operator norm.  The spaces
\(\mathcal H_{d,K}(n)\) and \(\mathcal H_{\le d,K}(n)\) are intrinsic:
they are determined by the canonical decomposition
\(
M_K(\mathbb C)=\mathbb CI\oplus M_K^0
\)
and do not depend on a choice of operator basis.  

To describe them in
coordinates, we now fix a normalized Hilbert--Schmidt orthonormal local
basis.
On \(M_K(\mathbb C)\) we use the normalized Hilbert--Schmidt inner
product
\[
 \langle A,B\rangle_{2,K}
 :=
 \frac1K\operatorname{tr}(AB^*).
\]
Let
\[
 \mathcal B=\{B_0,B_1,\ldots,B_{K^2-1}\}
 =
 \{I,B_1,\ldots,B_{K^2-1}\}
\]
be an orthonormal basis for \(\langle\cdot,\cdot\rangle_{2,K}\), with
\(B_\mu\in M_K^0\) for \(1\le\mu\le K^2-1\).
On the \(n\)-site algebra
\[
 M_K(\mathbb C)^{\otimes n}\cong M_{K^n}(\mathbb C)
\]
we use the corresponding normalized Hilbert--Schmidt inner product
\[
 \langle A,B\rangle_{2,K^n}
 :=
 \frac1{K^n}\operatorname{tr}(AB^*).
\]
For elementary tensors,
\[
 \left\langle
 A_1\otimes\cdots\otimes A_n,\,
 B_1\otimes\cdots\otimes B_n
 \right\rangle_{2,K^n}
 =
 \prod_{j=1}^n
 \langle A_j,B_j\rangle_{2,K}.
\]
We write
\[
 B_\beta
 :=
 B_{\beta_1}\otimes\cdots\otimes B_{\beta_n}
 \,, \qquad 
 \beta=(\beta_1,\ldots,\beta_n)
 \in\{0,\ldots,K^2-1\}^n
\]
Then
\[
 \mathcal B^{\otimes n}
 :=
 \{B_\beta:\beta\in\{0,\ldots,K^2-1\}^n\}
\]
is an orthonormal basis of \(M_K(\mathbb C)^{\otimes n}\) for
\(\langle\cdot,\cdot\rangle_{2,K^n}\).  We refer to it as the
tensor-product operator basis associated with \(\mathcal B\).  For
\(A\in M_K(\mathbb C)^{\otimes n}\), we define its
\(\mathcal B^{\otimes n}\)-coefficients by
\[
 \widehat A_{\mathcal B}(\beta)
 :=
 \langle A,B_\beta\rangle_{2,K^n}
 =
 \frac1{K^n}\operatorname{tr}(AB_\beta^*),
\]
so that
\[
 A
 =
 \sum_{\beta}
 \widehat A_{\mathcal B}(\beta)B_\beta.
\]
Put
\[
 \operatorname{supp}(\beta)
 :=
 \{j\in[n]:\beta_j\ne0\}.
\]
Since \(B_0=I\) and \(B_\mu\in M_K^0\) for \(\mu\ne0\), the intrinsic
support spaces admit the equivalent coordinate descriptions
\[
 \mathcal H_{d,K}(n)
 =
 \operatorname{span}
 \bigl\{
 B_\beta:\ |\operatorname{supp}(\beta)|=d
 \bigr\},
\]
and
\[
 \mathcal H_{\le d,K}(n)
 =
 \operatorname{span}
 \bigl\{
 B_\beta:\ |\operatorname{supp}(\beta)|\le d
 \bigr\}.
\]
Thus the basis \(\mathcal B\) merely provides coordinates for the
intrinsic support decomposition.  

This basis-independent notion of
support has a natural commutative model, to which we now turn.
We shall repeatedly use a canonical commutative corner.  Let
$E_{rs}=|e_r\rangle\langle e_s|$ and
$\mathcal D_K=\operatorname{span}\{E_{00},\ldots,E_{K-1,K-1}\}$.  For
$m\in\mathbb Z_K$ put
\begin{equation}\label{eq:Dm-definition}
 D_m:=\sum_{r=0}^{K-1}\zeta^{mr}E_{rr}.
\end{equation}
Then $D_0=I$ and $\tr D_m=0$ for $m\ne0$, so
$(D_m)_{m\ne0}$ is a basis of $\mathcal D_K^0:=\mathcal D_K\cap M_K^0$.
Define $J$ on characters by
\[
 J(\chi_m):=D_{m_1}\otimes\cdots\otimes D_{m_n}.
\]

\begin{lemma}[Commutative spherical corners]
\label{lem:spherical-corners}
For every $0\le d\le n$, the map $J$ restricts to isometric embeddings
\[
 J:B_d(C_K^n)\longrightarrow\mathcal H_{d,K}(n),
 \qquad
 J:B_{\le d}(C_K^n)\longrightarrow\mathcal H_{\le d,K}(n),
\]
and both ranges are $1$-complemented.
\end{lemma}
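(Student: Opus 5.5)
The map $J$ is the restriction to diagonal matrices of the algebra isomorphism between $C(C_K^n)$ and the diagonal subalgebra $\mathcal D_K^{\otimes n}\subset M_K(\mathbb C)^{\otimes n}$. I would first record the diagonal form of $J(f)$, which gives isometry, then locate the range inside the support spaces, and finally build the projection from the diagonal conditional expectation.

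\emph{Diagonal form and isometry.} For $f=\sum_m \widehat f(m)\chi_m$, the definition \eqref{eq:Dm-definition} gives
\[
 J(f)=\sum_m\widehat f(m)\bigotimes_j\sum_{r_j}\zeta^{m_jr_j}E_{r_jr_j}
 =\sum_{r\in\mathbb Z_K^n}f(\zeta^{r_1},\ldots,\zeta^{r_n})\,E_{r_1r_1}\otimes\cdots\otimes E_{r_nr_n}.
\]
So $J(f)$ is the diagonal matrix whose diagonal entries are the values of $f$ on $C_K^n$. The operator norm of a diagonal matrix is its largest entry in absolute value, hence $\|J(f)\|_{\op}=\|f\|_\infty$. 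This holds for every $f\in C(C_K^n)$, and in particular on $B_d(C_K^n)$ and $B_{\le d}(C_K^n)$.

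\emph{Range.} If $s(m)=d$, then $D_{m_j}=I$ for $j\notin\supp(m)$ and $D_{m_j}\in M_K^0$ for $j\in\supp(m)$. Therefore $J(\chi_m)\in\mathcal H_{\supp(m)}\subset\mathcal H_{d,K}(n)$. By linearity $J(B_d)\subset\mathcal H_{d,K}(n)$, and summing over $r\le d$ gives $J(B_{\le d})\subset\mathcal H_{\le d,K}(n)$.

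\emph{Complementation.} Let $\Phi$ be the conditional expectation onto the diagonal,
\[
 \Phi(A)=\sum_r E_{rr}AE_{rr},\qquad E_{rr}=E_{r_1r_1}\otimes\cdots\otimes E_{r_nr_n}.
\]
Equivalently $\Phi(A)=\mathbb E_{u}\,U_uAU_u^*$, the average over the diagonal unitaries $U_u=D_{u_1}\otimes\cdots\otimes D_{u_n}$ for $u\in\mathbb Z_K^n$. This exhibits $\Phi$ as an average of unitary conjugations, so it is contractive for the operator norm. It also factorizes as $\Phi=\Phi_1\otimes\cdots\otimes\Phi_n$, where $\Phi_1$ is the one-site diagonal projection.

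Each one-site factor preserves the decomposition $M_K=\mathbb CI\oplus M_K^0$: $\Phi_1(I)=I$, and $\tr\Phi_1(A)=\tr A$, so $\Phi_1$ maps $M_K^0$ into $\mathcal D_K^0$. Hence $\Phi(\mathcal H_S)\subset J(B_S(C_K^n))$ for every $S\subset[n]$.

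Set $P=\Phi$ restricted to $\mathcal H_{d,K}(n)$, respectively to $\mathcal H_{\le d,K}(n)$. Then $P$ is a contractive projection onto the range of $J$: it fixes diagonal matrices, and it lands in $J(B_d)$, respectively $J(B_{\le d})$, by the previous paragraph. This gives $1$-complementation. Equivalently, $J^{-1}\circ\Phi$ is a norm-one left inverse of $J$.

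The one point to verify carefully is that $\Phi$ respects the exact support grading, which is exactly the trace-preservation of $\Phi_1$ on $M_K^0$. Everything else is bookkeeping.
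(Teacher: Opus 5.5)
Your proof is correct and follows essentially the same route as the paper. For the isometry you use the diagonal form of $J(f)$, and for the complementation you use the tensorized diagonal conditional expectation, which fixes $I$ and preserves trace and hence maps each $\mathcal H_S$ into $J(B_S(C_K^n))$. The only cosmetic difference is how contractivity is justified: you write the pinching as an average of conjugations by the diagonal unitaries $D_{u_1}\otimes\cdots\otimes D_{u_n}$, while the paper appeals to unital complete positivity.
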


\begin{proof}
Because $D_0=I$ and $D_m\in M_K^0$ for $m\ne0$, the tensor $J(\chi_m)$ lies in
$\mathcal H_{\supp(m)}$; hence $J$ preserves support level.  If
$f=\sum_m a_m\chi_m$, then all matrices $J(\chi_m)$ are diagonal and
\[
 \|J(f)\|_{\rm op}
 =
 \max_{r\in\mathbb Z_K^n}
 \left|\sum_m a_m\zeta^{\langle m,r\rangle}\right|
 =
 \|f\|_{L^\infty(C_K^n)}.
\]
Thus $J$ is isometric.

Let $\mathbb E_{\mathcal D}(A)=\sum_{r=0}^{K-1}E_{rr}AE_{rr}$ be the canonical
conditional expectation onto $\mathcal D_K$.  It is unital and completely
positive, so $\|\mathbb E_{\mathcal D}\|=1$; it also fixes $I$, preserves trace,
and maps $M_K^0$ onto $\mathcal D_K^0$.  Therefore
$P_{\mathcal D}:=\mathbb E_{\mathcal D}^{\otimes n}$ is a norm-one projection
preserving every $\mathcal H_S$, with
\begin{equation*}
     P_{\mathcal D}(\mathcal H_{d,K}(n))=J(B_d(C_K^n)),
 \qquad
 P_{\mathcal D}(\mathcal H_{\le d,K}(n))=J(B_{\le d}(C_K^n)).\qedhere
\end{equation*}
\end{proof}

Thus $B_d(C_K^n)$ is a contractively complemented commutative model inside the
exact $d$-support operator space, and $B_{\le d}(C_K^n)$ plays the same role inside
the $d$-local operator space.

\subsection{Gell--Mann and Heisenberg--Weyl bases}

We record two standard single-site orthonormal systems that will be used as reference
coordinates.  On the standard basis of $\mathbb C^K$, let
$X|r\rangle=|r+1\rangle$ and $Z|r\rangle=\zeta^r|r\rangle$, with indices modulo
$K$, so $ZX=\zeta XZ$.  For $(a,b)\in\mathbb Z_K^2$ set
\[
 W_{a,b}:=X^aZ^b.
\]
The Heisenberg--Weyl family $\{W_{a,b}:(a,b)\in\mathbb Z_K^2\}$ is orthonormal
for $\langle A,B\rangle=K^{-1}\tr(A^*B)$; moreover
$\tr W_{a,b}=0$ unless $(a,b)=(0,0)$.  Hence the nonidentity Weyl matrices form
an orthonormal basis of $M_K^0$.

For
$\alpha=((a_1,b_1),\ldots,(a_n,b_n))\in(\mathbb Z_K^2)^n$, write
$W_\alpha=W_{a_1,b_1}\otimes\cdots\otimes W_{a_n,b_n}$ and
$s_{\rm HW}(\alpha)=|\{j:(a_j,b_j)\ne(0,0)\}|$.  Then
\[
 \mathcal H_{d,K}(n)=\operatorname{span}\{W_\alpha:s_{\rm HW}(\alpha)=d\},
 \qquad
 \mathcal H_{\le d,K}(n)=\operatorname{span}\{W_\alpha:s_{\rm HW}(\alpha)\le d\}.
\]
For the generalized Gell--Mann basis, define for $0\le j<k\le K-1$
\[
 A_{jk}=\sqrt{\frac K2}(E_{jk}+E_{kj}),
 \qquad
 B_{jk}=\sqrt{\frac K2}(-iE_{jk}+iE_{kj}),
\]
and, for $1\le m\le K-1$,
\[
 C_m=\sqrt{\frac K{m^2+m}}
 \left(\sum_{r=0}^{m-1}E_{rr}-mE_{mm}\right).
\]
Then
\[
 \operatorname{GM}(K)
 =
 \{I\}\cup\{A_{jk},B_{jk}:0\le j<k\le K-1\}
 \cup\{C_m:1\le m\le K-1\}
\]
is again orthonormal, and every nonidentity element is traceless.  If
$\mathfrak G_K=\{0\}\cup\mathfrak G_K^\times$ indexes this basis with $M_0=I$,
then for $\gamma\in\mathfrak G_K^n$ the tensors
$M_\gamma=M_{\gamma_1}\otimes\cdots\otimes M_{\gamma_n}$ satisfy
\[
 \mathcal H_{d,K}(n)=\operatorname{span}\{M_\gamma:|\supp(\gamma)|=d\},
 \qquad
 \mathcal H_{\le d,K}(n)=\operatorname{span}\{M_\gamma:|\supp(\gamma)|\le d\}.
\]
Thus both systems provide tensor-product coordinates for the same intrinsic
support spaces \(\mathcal H_{d,K}(n)\) and \(\mathcal H_{\le d,K}(n)\).

\subsection{The Heisenberg--Weyl total-degree filtration}
\label{sec:HW-total-degree}

The HW system carries an additional, basis-dependent total degree.  For a multi-index
$\alpha=((a_j,b_j))_{j=1}^n$, choose the representatives
$a_j,b_j\in\{0,\ldots,K-1\}$ and set
\[
 |\alpha|_{\rm HW}:=\sum_{j=1}^n(a_j+b_j).
\]
For $0\le d\le2(K-1)n$ define
\[
 \mathcal P^{\rm HW}_{d,K}(n):=\operatorname{span}\{W_\alpha:|\alpha|_{\rm HW}=d\},
 \qquad
 \mathcal P^{\rm HW}_{\le d,K}(n):=\operatorname{span}\{W_\alpha:|\alpha|_{\rm HW}\le d\}
 =\bigoplus_{r=0}^d\mathcal P^{\rm HW}_{r,K}(n).
\]
Every active site contributes at least one unit to $|\alpha|_{\rm HW}$, hence
\[
 \mathcal P^{\rm HW}_{\le d,K}(n)\subset\mathcal H_{\le d,K}(n).
\]
At exact total degree one has the support decomposition
\[
 \mathcal P^{\rm HW}_{d,K}(n)
 =
 \bigoplus_{r=0}^d
 \left(\mathcal P^{\rm HW}_{d,K}(n)\cap\mathcal H_{r,K}(n)\right),
\]
which is the operator analogue of \eqref{eq:P-support-decomposition}.

On the diagonal corner, commutative and HW total degrees agree.  Indeed,
$D_m=Z^m$, so $J(\chi_m)=Z^{m_1}\otimes\cdots\otimes Z^{m_n}$ and the associated
HW index $\iota(m)=((0,m_1),\ldots,(0,m_n))$ satisfies
\begin{equation}\label{eq:degree-diagonal-agreement}
 |\iota(m)|_{\rm HW}=|m|.
\end{equation}

\begin{lemma}[Commutative homogeneous corners]
\label{lem:homogeneous-corners}
For every $d\ge0$, the map $J$ restricts to isometric embeddings
\[
 J:P_d(C_K^n)\longrightarrow\mathcal P^{\rm HW}_{d,K}(n),
 \qquad
 J:P_{\le d}(C_K^n)\longrightarrow\mathcal P^{\rm HW}_{\le d,K}(n),
\]
and both ranges are $1$-complemented.
\end{lemma}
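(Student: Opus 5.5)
The plan is to follow the proof of \cref{lem:spherical-corners} almost verbatim, replacing support degree by total degree and using the agreement \eqref{eq:degree-diagonal-agreement} on the diagonal corner.

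\textbf{Range and isometry.} Since $D_m=Z^m=W_{0,m}$, we have $J(\chi_m)=W_{\iota(m)}$, and \eqref{eq:degree-diagonal-agreement} gives $|\iota(m)|_{\rm HW}=|m|$. Hence $J$ maps $P_d(C_K^n)$ into $\mathcal P^{\rm HW}_{d,K}(n)$ and $P_{\le d}(C_K^n)$ into $\mathcal P^{\rm HW}_{\le d,K}(n)$. Isometry needs no new argument: $J$ is already isometric on all of $C(C_K^n)$, because the image of $f$ is diagonal with diagonal entries $f(\zeta^{r_1},\dots,\zeta^{r_n})$, as computed in the proof of \cref{lem:spherical-corners}.

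\textbf{Complementation.} I would use the same norm-one projection $P_{\mathcal D}=\mathbb E_{\mathcal D}^{\otimes n}$, which is a tensor power of a unital completely positive map and hence contractive. The point to check is its action on Weyl tensors. On one site,
\[
 \mathbb E_{\mathcal D}(X^aZ^b)=\sum_r E_{rr}X^aZ^bE_{rr}.
\]
Since $X^aZ^b$ maps $|r\rangle$ to a multiple of $|r+a\rangle$, its diagonal part vanishes unless $a=0$. When $a=0$, $Z^b$ is already diagonal and is fixed. Thus $\mathbb E_{\mathcal D}(W_{a,b})=\delta_{a,0}W_{0,b}$, and by tensorizing, $P_{\mathcal D}(W_\alpha)$ is either $0$ or $W_\alpha$. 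The latter happens exactly when all $a_j=0$, that is, when $\alpha=\iota(m)$ with $m=(b_1,\dots,b_n)$.

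\textbf{Identifying the ranges.} It follows that $P_{\mathcal D}$ maps each spanning element $W_\alpha$ of $\mathcal P^{\rm HW}_{d,K}(n)$ either to $0$ or to $W_{\iota(m)}=J(\chi_m)$ with $|m|=|\alpha|_{\rm HW}=d$. Therefore $P_{\mathcal D}(\mathcal P^{\rm HW}_{d,K}(n))\subset J(P_d(C_K^n))$. Conversely, $P_{\mathcal D}$ fixes $J(P_d(C_K^n))$, which lies inside $\mathcal P^{\rm HW}_{d,K}(n)$, so equality holds. The restriction of $P_{\mathcal D}$ is then a norm-one projection of $\mathcal P^{\rm HW}_{d,K}(n)$ onto $J(P_d(C_K^n))$. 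The same argument works verbatim for $\le d$, or follows by summing over $r\le d$.

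The only step needing care is this last one, namely that $P_{\mathcal D}$ preserves HW total degree. It holds because the surviving Weyl tensors keep their representatives $b_j\in\{0,\dots,K-1\}$ unchanged and have $a_j=0$.
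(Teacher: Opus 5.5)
Your proposal is correct and follows the paper's proof. Isometry is inherited from Lemma~\ref{lem:spherical-corners}, and the identity $|\iota(m)|_{\rm HW}=|m|$ places the image of $J$ in the right HW total-degree spaces. The computation $\mathbb E_{\mathcal D}(X^aZ^b)=\delta_{a,0}Z^b$ then shows that the norm-one projection $P_{\mathcal D}=\mathbb E_{\mathcal D}^{\otimes n}$ maps $\mathcal P^{\rm HW}_{d,K}(n)$ and $\mathcal P^{\rm HW}_{\le d,K}(n)$ onto $J(P_d(C_K^n))$ and $J(P_{\le d}(C_K^n))$ respectively; you also spell out the reverse inclusion via $P_{\mathcal D}$ fixing diagonal operators, which the paper leaves implicit.
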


\begin{proof}
Isometry follows from Lemma~\ref{lem:spherical-corners}.  By
\eqref{eq:degree-diagonal-agreement}, $J$ sends total degree $d$ into HW total
degree $d$.  Moreover,
\[
 \mathbb E_{\mathcal D}(X^aZ^b)
 =
 \begin{cases}
 Z^b,&a=0,\\
 0,&a\ne0.
 \end{cases}
\]
Thus $P_{\mathcal D}=\mathbb E_{\mathcal D}^{\otimes n}$ preserves HW total
degree whenever a basis tensor survives the projection, and
\[
 P_{\mathcal D}(\mathcal P^{\rm HW}_{d,K}(n))=J(P_d(C_K^n)),
 \qquad
 P_{\mathcal D}(\mathcal P^{\rm HW}_{\le d,K}(n))=J(P_{\le d}(C_K^n)).
\]
Since $\|P_{\mathcal D}\|=1$, both ranges are $1$-complemented.
\end{proof}

The two filtrations therefore fit the parallel picture
\[
 \begin{gathered}
 B_d(C_K^n)\stackrel{1\text{-compl.}}{\hookrightarrow}\mathcal H_{d,K}(n),
 \qquad
 B_{\le d}(C_K^n)\stackrel{1\text{-compl.}}{\hookrightarrow}\mathcal H_{\le d,K}(n),\\
 P_d(C_K^n)\stackrel{1\text{-compl.}}{\hookrightarrow}\mathcal P^{\rm HW}_{d,K}(n),
 \qquad
 P_{\le d}(C_K^n)\stackrel{1\text{-compl.}}{\hookrightarrow}\mathcal P^{\rm HW}_{\le d,K}(n),
 \end{gathered}
\]
together with the inclusions $P_{\le d}(C_K^n)\subset B_{\le d}(C_K^n)$ and
$\mathcal P^{\rm HW}_{\le d,K}(n)\subset\mathcal H_{\le d,K}(n)$.  The remainder
of the paper is organized by support degree; HW total degree reappears only as a
corollary of the support-sensitive theory.

\section{Intrinsic noncommutative Bohnenblust--Hille inequality}
\label{sec:BH-theory}

We now turn to the main quantitative result.  In view of the exponential
lower bound of \cite{Slote2026}, the main issue is the dependence of the
dimension-free Bohnenblust--Hille constants on the local dimension \(K\).
Put
\[
 p_d=\frac{2d}{d+1}.
\]
For fixed \(K\) and \(d\), let \(C_{d,K}\) be the least constant, uniform in
\(n\) and in the choice of local basis \(\mathcal B\), such that
\[
 \|\widehat A_{\mathcal B}\|_{\ell_{p_d}}
 \le C_{d,K}\|A\|_{\rm op},
 \qquad
 A\in\mathcal H_{d,K}(n).
\]
Set
\[
 \beta(K)=\limsup_{d\to\infty}C_{d,K}^{1/d}.
\]
Finally, let \(a_1\) denote a universal constant in the complex
Khinchin--Steinhaus inequality at \(p=1\); see
\cite[Section~6.2 and Corollary~6.10]{DGMS}.
Our main estimate is the following.

\begin{theorem}[Exact-support Bohnenblust--Hille upper bound]
\label{thm:local-BH}
Let $K\ge2$, $d\ge2$, $n\ge d$, and let
$\mathcal B=\{I,B_1,\ldots,B_{K^2-1}\}$ be any normalized
Hilbert--Schmidt orthonormal local operator basis with
$B_\mu\in M_K^0$ for $\mu\ge1$.  Then every
$A\in\mathcal H_{d,K}(n)$ satisfies
\[
 \|\widehat A_{\mathcal B}\|_{\ell_{p_d}}
 \le
 \sqrt{K^2-1}\,a_1^{2(d-1)}
 (8\sqrt K)^d
 \left(\frac{d^d}{d!}\right)^{1/p_d}
 \|A\|_{\rm op}.
\]
Consequently,
\[
\beta(K)\le 8a_1^2\sqrt e\,\sqrt K,
\]
and in particular \(\beta(K)=O(\sqrt K)\).  
\end{theorem}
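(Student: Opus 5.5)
The plan has four steps: a one-site scalarization, a block-transversal decoupling of the sites, the Blei inequality on the transversal part, and a count of the losses.

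\emph{One-site scalarization.} For a traceless $B\in M_K^0$ we want a scalar function $\phi_B$ on a probability space $\Omega_K$ with two properties. First, $\phi_B$ reproduces the pairing with the basis. Second, its size costs only $\sqrt K$. Our first choice is Haar-random pure states: for a unit vector $\psi\in\C^K$ put $\phi_B(\psi)=\langle\psi,B\psi\rangle$. Then $\mathbb E_\psi\langle\psi,B_\mu\psi\rangle\overline{\langle\psi,B_\nu\psi\rangle}=\frac{1}{K(K+1)}\tr(B_\mu B_\nu^*)=\frac{1}{K+1}\delta_{\mu\nu}$. So, up to the factor $K+1$, the maps $B_\mu\mapsto\sqrt{K+1}\,\phi_{B_\mu}$ are orthonormal in $L^2(\Omega_K)$. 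Their evaluation on states is contractive in operator norm, $|\langle\psi,A\psi\rangle|\le\|A\|_{\rm op}$, which gives a factor of about $\sqrt K$ per active site.

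For a product state $\psi=\psi_1\otimes\cdots\otimes\psi_n$ we get
$\langle\psi,A\psi\rangle=\sum_\beta \widehat A_{\mathcal B}(\beta)\prod_{j\in\supp\beta}\phi_{B_{\beta_j}}(\psi_j)$.
This is a $d$-homogeneous (in the support sense) polynomial in the independent mean-zero variables $\psi_j$, bounded by $\|A\|_{\rm op}$.

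\emph{Block-transversal decoupling.} Partition $[n]$ into $d$ colour classes at random, each site independently coloured uniformly. Restrict to the transversal part $A_T$: supports meeting each colour class exactly once. The probability that a given $d$-set is transversal is $d!/d^d$. Averaging the $\ell_{p_d}$-norms over colourings will produce the factor $(d^d/d!)^{1/p_d}$.

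We need $\|\,\cdot\,\|_\infty$ of the transversal part controlled by that of the full product-state polynomial. We obtain it by a polarization, or ``frozen sites'' argument: on sites of colour $k$ we use states mixing a fixed $\psi$ with a $d$-th root of unity phase $\omega_k$ in the mean-zero component. Averaging $\prod\bar\omega_k$ then extracts the transversal multilinear piece. This costs a factor $c^d$, which we expect to be absorbed into the constant $8$. Concretely, each site of colour $k$ gets a state $\rho_j(\omega_k)=\frac1K I+\omega_k\epsilon(\rho_j-\frac1K I)$, taking Hermitian combinations so the states stay positive. Then $\tr(A\,\rho(\omega))$ is a polynomial in $\omega$ of degree $d$ with sup at most $\|A\|_{\rm op}$, and its coefficient of $\omega_1\cdots\omega_d$ is the transversal part.

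\emph{Blei and Khinchin--Steinhaus.} The transversal part is a $d$-linear form $L(x^{(1)},\dots,x^{(d)})$ whose $k$-th argument ranges over the product of the sites of colour $k$, with local scalar functions. Apply the Blei inequality to bound the $\ell_{p_d}$-norm of the coefficients by the geometric mean over $k$ of mixed $\ell_1(\ell_2)$ norms. For each $k$, freeze all blocks but the $k$-th and view the mixed norm as an $L^1$ average over Steinhaus-type (Haar-state) variables. The complex Khinchin--Steinhaus inequality at $p=1$, applied in each of the $d-1$ frozen variables, costs $a_1$ per variable per application; with two applications this gives $a_1^{2(d-1)}$. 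The remaining variable is handled by the $L^2$-orthogonality from the first step, contributing $\sqrt{K+1}$ per site. The factor $\sqrt{K^2-1}$ comes from the last block, where an $\ell_2$ sum over the $K^2-1$ local indices must be converted to an $\ell_{p_d}$ bound by Cauchy--Schwarz, or one sup over a basis element.

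The \textbf{main obstacle} is that Khinchin--Steinhaus for the state variables $\phi_B(\psi)$ is not literally the Steinhaus case. We will need the local scalar system to be dominated, in $L^1$-versus-$L^2$, by Steinhaus variables, for instance by realizing $\psi$ with random phases $\psi=\sum_r z_r e_r/\sqrt K$. This yields the diagonal part exactly. For the off-diagonal part we apply a Fourier transform to a rotated basis, at a constant loss, which is plausibly where the $8$ comes from.

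Finally, for the bound on $\beta(K)$, take $d$-th roots: $(d^d/d!)^{1/(p_d d)}\to\sqrt e$ because $1/p_d\to1/2$, while $\sqrt{K^2-1}^{1/d}\to1$ and $a_1^{2(d-1)/d}\to a_1^2$. This gives $\beta(K)\le8a_1^2\sqrt e\sqrt K$.
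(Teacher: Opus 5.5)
Your architecture (scalarize, colour-decouple, Blei plus Khinchin--Steinhaus, average over colourings, take $d$th roots) matches the paper's, but two steps fail as written.

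The decisive one is the decoupling, which does \emph{not} cost a universal $c^d$. For $\rho_j(\omega)=\frac1K I+\omega\epsilon(\rho_j-\frac1K I)$ with $\rho_j$ pure, the eigenvalue of $\rho_j(-1)$ on the range of $\rho_j$ is $\frac{1+\epsilon}{K}-\epsilon$. So positivity forces $\epsilon\le 1/(K-1)$. Taking real parts of roots of unity does not help, since some of them have real part $\le-\frac12$. The coefficient of $\omega_1\cdots\omega_d$ equals $\epsilon^d$ times the transversal value, so you only get the bound $(K-1)^d\|A\|_{\rm op}$ for it. Combined with your $\sqrt{K+1}$ per site, the base becomes of order $K^{3/2}$, not $\sqrt K$. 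With genuinely complex phases, $\rho_j(\omega)$ is not a state ($\|\rho_j(\omega)\|_1>1$). Then $|\tr(A\rho(\omega))|\le\|A\|_{\rm op}$ is unjustified, and the trace-norm bound $\prod_{j=1}^n\|\rho_j(\omega)\|_1$ grows with $n$. The paper instead decouples \emph{before} scalarizing, at the operator level: $A_\ell=\prod_{j=1}^d(I-\mathbb E_{E_j})A$ is block-transversal and $\|A_\ell\|_{\rm op}\le2^d\|A\|_{\rm op}$, because each $\mathbb E_{E_j}$ is a contractive conditional expectation. The correct state-level version switches each colour class between $\rho_j$ and $\frac1K I$ and uses inclusion--exclusion over $\{0,1\}^d$. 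All states stay positive, the cost is $2^d$, and exact support $d$ isolates the transversal part.

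The scalarization half has two further holes, which the paper closes with a single device. First, Blei's mixed norm $M_k(a)$ is an $\ell_1$-sum over the pairs $(i_k,\mu_k)$. So after Khinchin--Steinhaus in the other blocks you need the Sidon-type bound $\sum_{i,\mu}|c_{i,\mu}|\le\sqrt{K^2-1}\,\sup|\sum_{i,\mu}c_{i,\mu}g_{i,\mu}|$ over all sites of the $k$th block. $L^2$-orthogonality only controls $(\sum_i\|H_i\|_2^2)^{1/2}$, which is an $\ell_2$-sum over sites. The paper gets the $\ell_1$-sum from its phase-alignment lemma, which uses independence of the sites and the equivariance $g_H(e^{\mathrm it}z,w)=e^{\mathrm it}g_H(z,w)$. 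Your $\langle\psi,H\psi\rangle$ is invariant under $\psi\mapsto e^{\mathrm it}\psi$, so nothing of this kind is available in your setup.

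Second, your fix for the Khinchin--Steinhaus obstacle is backwards. For $\psi=K^{-1/2}\sum_r z_re_r$ the diagonal contributes $\frac1K\sum_rB_{rr}|z_r|^2=\frac1K\tr B=0$, so random phases see only the off-diagonal entries. The missing idea is the bilinear \emph{two-Steinhaus} scalarization $g_B(z,w)=\sqrt K\langle Bu_z,u_w\rangle$ with independent $z,w\in\T^K$. Every entry $B_{rs}$, diagonal included, sits on its own character $z_s\overline{w_r}$. This gives the exact isometry $\int g_B\overline{g_C}=\langle B,C\rangle_2$, makes Khinchin--Steinhaus literally applicable with two Steinhaus families per block (whence $a_1^{2(d-1)}$), and supplies the phase equivariance needed above.

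The constant is then $8\sqrt K=2\cdot4\sqrt K$, with no Fourier rotation involved. The $2$ comes from $\|I-\mathbb E_{E_j}\|\le2$. The $4\sqrt K$ comes from $\|\mathcal S_E\|\le4\sqrt K$, obtained via $|g_H|\le\sqrt K\|H\|_{\rm op}$ and $\sum_i\|H_i\|_{\rm op}\le4\|\sum_iH_i^{(i)}\|_{\rm op}$. It is passed to $\|F_{A_\ell}\|_\infty\le(4\sqrt K)^d\|A_\ell\|_{\rm op}$ through the injective tensor norm.

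A Haar-state route might be repairable: hypercontractivity on products of spheres for the Khinchin step, and a Hermitian-part splitting for the $\ell_1$ step. That could still give an $O(\sqrt K)$ base, but not the constant in the statement.
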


The exponent \(p_d=2d/(d+1)\) is optimal.  Indeed, by
Lemma~\ref{lem:spherical-corners} the corresponding commutative
support space embeds isometrically into the exact-support operator
space, and the exponent is optimal in the commutative setting;
see~\cite{DGMMM-support}.

The proof of Theorem~\ref{thm:local-BH} is developed in Subsections~\ref{Two-Steinhaus scalarization}--~\ref{Block-transversal decoupling} and assembled at the end of
Subsection~\ref{Block-transversal decoupling}.  Its central decoupling device is adapted from
\cite{DGMMM-support}: a random partition of the sites isolates
block-transversal pieces, so that scalarization is applied only blockwise
rather than to the full operator.  On each such piece, standard tools from
the theory of Bohnenblust--Hille inequalities, chiefly Blei's inequality and
the complex Khinchin--Steinhaus inequality, yield the required coefficient
estimate, while averaging over the partition recovers the full coefficient
vector.  No reduction to cyclic groups, Remez-type argument, or previously
established Bohnenblust--Hille inequality enters the proof.

\subsection{Two-Steinhaus scalarization}
\label{Two-Steinhaus scalarization}
 Write
\[
 M:=K^2-1,
 \qquad
 \langle B,C\rangle_2=\frac1K\tr(BC^*).
\]
For $z=(z_1,\ldots,z_K)\in\T^K$, put
\[
 u_z=\frac1{\sqrt K}(z_1,\ldots,z_K)\in\C^K.
\]
For $B\in M_K(\C)$ and $(z,w)\in\T^K\times\T^K$, define
\begin{equation}\label{eq:gB-section3}
 g_B(z,w)
 =\sqrt K\,\langle Bu_z,u_w\rangle
 =\frac1{\sqrt K}\sum_{r,s=1}^K B_{rs}z_s\overline{w_r}.
\end{equation}
Here and below the Hilbert-space inner product is linear in the first
variable.
If $E\subset[n]$ is a set of sites, we use the product probability space
\[
 \Omega_E=(\T^K\times\T^K)^E.
\]
A point of $\Omega_E$ is written
\[
 \omega=((z_i,w_i))_{i\in E}.
\]
For $i\in E$ and $H\in M_K(\C)$ we define the site-$i$ copy
\[
 g_H^{(i)}(\omega):=g_H(z_i,w_i).
\]
Thus $g_H^{(i)}$ depends only on the coordinate $(z_i,w_i)$ of the site $i$.
For a basis matrix we abbreviate
\[
 g_{i,\mu}:=g_{B_\mu}^{(i)}.
\]

\begin{lemma}[Exact $L_2$ normalization]
\label{lem:L2-isometry-section3}
For all $B,C\in M_K(\C)$,
\[
 \int_{\T^K\times\T^K}g_B(z,w)\overline{g_C(z,w)}\,dz\,dw
 =\langle B,C\rangle_2.
\]
Consequently, for $i,j\in E$ and $1\le\mu,\nu\le M$,
\begin{equation}\label{eq:site-orthogonality-section3}
 \int_{\Omega_E}g_{i,\mu}\overline{g_{j,\nu}}
 =\delta_{ij}\delta_{\mu\nu}.
\end{equation}
\end{lemma}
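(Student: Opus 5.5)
The plan is to expand $g_B\overline{g_C}$ directly from \eqref{eq:gB-section3} and integrate term by term, using only that the coordinates of $z$ and of $w$ are independent Steinhaus variables, so that $\int_{\T^K} z_s\overline{z_{s'}}\,dz=\delta_{ss'}$.

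First, I write
\[
 g_B(z,w)\overline{g_C(z,w)}
 =\frac1K\sum_{r,s,r',s'=1}^K B_{rs}\overline{C_{r's'}}\,
 z_s\overline{z_{s'}}\;\overline{w_r}w_{r'} .
\]
Integrating over $z$ kills all terms with $s\ne s'$, and integrating over $w$ kills all terms with $r\ne r'$, since $dz\,dw$ is the product Haar measure. What survives is
\[
 \frac1K\sum_{r,s}B_{rs}\overline{C_{rs}}=\frac1K\tr(BC^*)=\langle B,C\rangle_2 .
\]
This settles the first identity.

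For \eqref{eq:site-orthogonality-section3} I split into two cases. If $i=j$, both functions depend only on the coordinate $(z_i,w_i)$. The integral over $\Omega_E$ then reduces to the one-site integral, which equals $\langle B_\mu,B_\nu\rangle_2=\delta_{\mu\nu}$ by orthonormality of $\mathcal B$.

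If $i\ne j$, the two factors depend on independent coordinates, so the integral factors as $\int g_{B_\mu}\cdot\overline{\int g_{B_\nu}}$. It therefore suffices to show that $\int g_B=0$ for every $B$. This holds because each monomial $z_s\overline{w_r}$ has mean zero; in fact one factor of $z$ alone already integrates to zero. Note that the mean-zero argument does not even need tracelessness of the $B_\mu$.

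There is no real obstacle here. The only point requiring care is the conjugation convention: the inner product is linear in the first variable, so one must check that $\overline{w_r}$ pairs with $w_{r'}$ and that $B_{rs}\overline{C_{rs}}$ sums to $\tr(BC^*)$ rather than $\tr(B^*C)$.
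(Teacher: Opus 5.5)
Your proposal is correct and follows the paper's proof essentially step for step. You expand $g_B\overline{g_C}$ and use orthogonality of the torus characters to get $\frac1K\tr(BC^*)$. For the site version you reduce the case $i=j$ to the one-site identity, and for $i\ne j$ you factor the integral and use $\int g_B=0$ for every $B$.
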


\begin{proof}
Expanding \eqref{eq:gB-section3},
\[
 g_B(z,w)\overline{g_C(z,w)}
 =\frac1K
 \sum_{r,s,r',s'}B_{rs}\overline{C_{r's'}}
 z_s\overline{z_{s'}}\,\overline{w_r}w_{r'}.
\]
The characters of the torus are orthogonal, so only the terms with
$r=r'$ and $s=s'$ survive.  Hence
\[
 \int g_B\overline{g_C}
 =\frac1K\sum_{r,s}B_{rs}\overline{C_{rs}}
 =\frac1K\tr(BC^*)
 =\langle B,C\rangle_2.
\]
If $i=j$, this gives
\[
 \int_{\Omega_E}g_{i,\mu}\overline{g_{i,\nu}}
 =\langle B_\mu,B_\nu\rangle_2
 =\delta_{\mu\nu}.
\]
If $i\ne j$, the two functions depend on independent coordinates and the
integral factors.  Moreover $\int g_B=0$ for every $B$, because every
Steinhaus coordinate has mean zero.  Thus the integral is zero.  This proves
\eqref{eq:site-orthogonality-section3}.
\end{proof}

\begin{lemma}[Independent phase alignment]
\label{lem:phase-alignment-section3}
Let $E$ be finite and let $H_i\in M_K(\C)$ for $i\in E$.  Then
\begin{equation}\label{eq:phase-alignment-section3}
 \left\|\sum_{i\in E}g_{H_i}^{(i)}\right\|_{L_\infty(\Omega_E)}
 =\sum_{i\in E}\|g_{H_i}\|_{L_\infty(\T^K\times\T^K)}.
\end{equation}
\end{lemma}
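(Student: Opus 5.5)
The upper bound $\le$ in \eqref{eq:phase-alignment-section3} is just the triangle inequality. Each $g^{(i)}_{H_i}$ depends only on the coordinate $(z_i,w_i)$, and on that coordinate it is bounded by $\|g_{H_i}\|_{L_\infty(\T^K\times\T^K)}$. Hence for every $\omega\in\Omega_E$,
\[
 \Bigl|\sum_{i\in E}g^{(i)}_{H_i}(\omega)\Bigr|\le\sum_{i\in E}\|g_{H_i}\|_{L_\infty}.
\]

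For the reverse inequality I will exploit two facts: the sites are independent, and each $g_B$ is invariant under a global phase rotation in a controlled way. Specifically, for $\theta\in\mathbb R$ and $(z,w)\in\T^K\times\T^K$, formula \eqref{eq:gB-section3} gives
\[
 g_B(e^{i\theta}z,w)=e^{i\theta}g_B(z,w),
\]
since $g_B$ is linear in $z$, and $(e^{i\theta}z,w)$ still lies in $\T^K\times\T^K$. Thus the set of values of $g_B$ is invariant under rotations, and so the supremum of $|g_B|$ equals the supremum of $\operatorname{Re} g_B$.

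First, note that $g_{H_i}$ is a continuous function (a polynomial) on the compact set $\T^K\times\T^K$. So for each $i$ there is a point $(z_i^*,w_i^*)$ where
\[
 |g_{H_i}(z_i^*,w_i^*)|=\|g_{H_i}\|_{L_\infty}.
\]
Next, multiply $z_i^*$ by a suitable unimodular scalar $e^{i\theta_i}$ so that $g_{H_i}(e^{i\theta_i}z_i^*,w_i^*)$ is real and nonnegative, hence equal to $\|g_{H_i}\|_{L_\infty}$. Because $\Omega_E$ is a product space, these choices can be made independently at each site. This yields a point $\omega^*\in\Omega_E$ with
\[
 \sum_{i\in E}g^{(i)}_{H_i}(\omega^*)=\sum_{i\in E}\|g_{H_i}\|_{L_\infty}.
\]

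Finally, one passes from pointwise values to the $L_\infty(\Omega_E)$ norm. For the product Haar measure, the essential supremum of a continuous function equals its supremum, because every nonempty open set has positive measure. Hence the $L_\infty(\Omega_E)$ norm of the sum is at least its value at $\omega^*$, which gives equality.

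No step is a genuine obstacle. The only points that need care are that the phase alignment uses the homogeneity in $z$ alone (one could equally use $w$, with a conjugate phase), and the identification of the essential supremum with the pointwise supremum for continuous functions.
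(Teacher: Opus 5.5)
Your proof is correct and follows the paper's argument: the triangle inequality gives ``$\le$'', and for ``$\ge$'' you choose a maximizer at each site and rotate $z_i$ by a phase using $g_H(e^{\mathrm i t}z,w)=e^{\mathrm i t}g_H(z,w)$, independently at each site, so that all summands become nonnegative and sum to the right-hand side. Your added remark that the essential supremum equals the pointwise supremum for these continuous functions makes explicit a point the paper leaves implicit.
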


\begin{proof}
The inequality ``$\le$'' is the triangle inequality.  For the reverse
inequality, for every $i$ choose $(z_i,w_i)$ at which $|g_{H_i}|$ attains its
maximum, and write
\[
 g_{H_i}(z_i,w_i)=e^{\mathrm i\theta_i}\|g_{H_i}\|_\infty.
\]
Since
\[
 g_H(e^{\mathrm it}z,w)=e^{\mathrm it}g_H(z,w),
\]
replacing $z_i$ by $e^{-\mathrm i\theta_i}z_i$ makes the $i$th value equal to
$\|g_{H_i}\|_\infty$.  These rotations can be made independently because the
$i$th function depends only on $(z_i,w_i)$.  At the resulting point of
$\Omega_E$ all summands are nonnegative real numbers, and their sum is exactly
the right-hand side of~ \eqref{eq:phase-alignment-section3}.
\end{proof}

\subsection{Scalar Bohnenblust--Hille estimate}

We next recall the traditional one-index form of Blei's inequality.  We write
all indices explicitly because in our application each scalar index is a pair
consisting of a physical site and a local operator coordinate.

\begin{lemma}[Blei inequality]
\label{lem:traditional-blei-section3}
Let $E_1,\ldots,E_d$ be finite sets and let
\[
 a_{(i_1,\mu_1),\ldots,(i_d,\mu_d)},
 \qquad
 (i_j,\mu_j)\in E_j\times[M],
\]
be a scalar array.  For $1\le k\le d$, put
\begin{align*}
 M_k(a)
 :=\sum_{(i_k,\mu_k)\in E_k\times[M]}
 \Bigg(
 \sum_{\substack{(i_j,\mu_j)\in E_j\times[M]\\ j\ne k}}
 \left|
 a_{(i_1,\mu_1),\ldots,(i_d,\mu_d)}
 \right|^2
 \Bigg)^{1/2}.
\end{align*}
Then
\[
 \Bigg(
 \sum_{(i_1,\mu_1)\in E_1\times[M]}\cdots
 \sum_{(i_d,\mu_d)\in E_d\times[M]}
 \left|a_{(i_1,\mu_1),\ldots,(i_d,\mu_d)}\right|^{p_d}
 \Bigg)^{1/p_d}
 \le
 \prod_{k=1}^d M_k(a)^{1/d}.
\]
\end{lemma}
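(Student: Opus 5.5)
This is the classical mixed-norm Blei inequality, with each index set $E_j\times[M]$ relabelled as one finite index set. I would prove it by Hölder's inequality, after a suitable rewriting of $|a|^{p_d}$. Write $I_j:=E_j\times[M]$ and $\mathbf i=(\mathbf i_1,\ldots,\mathbf i_d)\in I_1\times\cdots\times I_d$. For each $k$, write $\mathbf i=(\mathbf i_k,\mathbf i^{(k)})$, where $\mathbf i^{(k)}$ collects the other $d-1$ indices. The mixed norm $M_k(a)$ is then the $\ell_1(I_k;\ell_2)$-norm of $a$, with the outer $\ell_1$ taken in $\mathbf i_k$ and the inner $\ell_2$ in $\mathbf i^{(k)}$.

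The key identity is the factorisation
\[
 |a_{\mathbf i}|^{p_d}=\prod_{k=1}^d |a_{\mathbf i}|^{2/(d+1)}.
\]
This holds because $p_d=2d/(d+1)=d\cdot\frac{2}{d+1}$. I would apply the generalized Hölder inequality with $d$ factors, each raised to the conjugate exponent $d$. This gives
\[
 \sum_{\mathbf i}|a_{\mathbf i}|^{p_d}\le\prod_{k=1}^d\Bigl(\sum_{\mathbf i}|a_{\mathbf i}|^{2d/(d+1)}\Bigr)^{1/d},
\]
which is circular. So the real route is the standard induction on $d$ (Blei's inequality in the form found in \cite{DGMS}, or the Bohnenblust--Hille/Blei mixed-norm interpolation). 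For each $k$, the exponent vector $(1,2,\ldots,2)$, with $1$ placed at position $k$, has reciprocal vector whose average over $k$ is $\bigl(\tfrac1d+\tfrac{d-1}{2d},\ldots\bigr)=\bigl(\tfrac{d+1}{2d},\ldots\bigr)=(1/p_d,\ldots,1/p_d)$. By the mixed-norm Hölder interpolation (Minkowski plus Hölder, as in \cite[Prop.~6.x]{DGMS} or Bennett's/Blei's lemma), $\|a\|_{\ell_{p_d}}\le\prod_k\|a\|_{(1,2,\ldots,2)_k}^{1/d}$. Here $\|a\|_{(1,2,\ldots,2)_k}$ means $\ell_1$ in $\mathbf i_k$ and $\ell_2$ jointly in the remaining indices, and this is exactly $M_k(a)$.

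To carry this out self-containedly, I would prove the general mixed-norm Hölder inequality $\|a\|_{\mathbf p}\le\prod_k\|a\|_{\mathbf p^{(k)}}^{\theta_k}$, where $1/\mathbf p=\sum_k\theta_k/\mathbf p^{(k)}$ coordinatewise. The proof is by induction on the number of index levels: peel off the outermost sum, apply Hölder in that variable, and use Minkowski's inequality to move inner norms outside when the inner exponent is at least the outer one. The reordering step is needed because the norm $(1,2,\ldots,2)_k$ puts the $\ell_1$ index outside, while the index order is fixed. I would first use Minkowski, $\ell_1(\ell_2)\ge\ell_2(\ell_1)$ fails in general but $\ell_{2}(\ell_1)\ge\ell_1(\ell_2)$ holds, to place all norms in a common ordering. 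I would do this via a careful choice that only ever moves smaller exponents outward, which is the permissible direction.

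The main obstacle is exactly this ordering/Minkowski bookkeeping: I must make sure every interchange goes in the direction that increases the norm. Since the lemma is classical, I would ultimately cite \cite{DGMS} (Blei's inequality) after checking that relabelling $I_j=E_j\times[M]$ reduces the statement verbatim to that form.
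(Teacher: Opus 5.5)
Your final plan (relabel $I_j=E_j\times[M]$ and invoke the classical Blei inequality from \cite{DGMS}) is exactly the paper's proof. One detail is missing: the cited statement uses a common index range, so when the cardinalities $|E_j\times[M]|$ differ you must pad the array with zeros, which changes neither side.

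Your self-contained sketch, however, states Minkowski's inequality backwards. In your outer(inner) notation it is $\ell_1(\ell_2)\ge\ell_2(\ell_1)$ that always holds, whereas $\ell_2(\ell_1)\ge\ell_1(\ell_2)$ fails: for $a_{xy}=\delta_{xy}$ on $[n]^2$ the left side is $\sqrt n$ and the right side is $n$. Taken literally, the interchange you name as the main obstacle would go in the wrong direction. The correct bookkeeping is as follows.

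Fix the order $(\mathbf i_1,\ldots,\mathbf i_d)$. Let $\|a\|_{\mathbf q^{(k)}}$ be the mixed norm in this order with exponent $1$ in slot $k$ and $2$ in every other slot.

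First, apply the fixed-order mixed H\"older inequality to $|a|=\prod_{k=1}^d|a|^{1/d}$. This is iterated H\"older level by level and needs no Minkowski. Since $\frac1d\bigl(1+\frac{d-1}{2}\bigr)=\frac1{p_d}$ in every slot, it gives $\|a\|_{\ell_{p_d}}\le\prod_{k=1}^d\|a\|_{\mathbf q^{(k)}}^{1/d}$.

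Second, put $x=(\mathbf i_1,\ldots,\mathbf i_{k-1})$, $y=\mathbf i_k$ and $f(x,y)=\bigl(\sum_{\mathbf i_{k+1},\ldots,\mathbf i_d}|a_{\mathbf i}|^2\bigr)^{1/2}$. Minkowski in the form $\bigl(\sum_x\bigl(\sum_y f(x,y)\bigr)^2\bigr)^{1/2}\le\sum_y\bigl(\sum_x f(x,y)^2\bigr)^{1/2}$ then yields $\|a\|_{\mathbf q^{(k)}}\le M_k(a)$. This is the ``smaller exponent outward'' move you describe correctly in your next sentence, and it completes the proof.
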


\begin{proof}
This is the standard Blei inequality; see
\cite[Proposition~6.11]{DGMS}.  If the sets $E_j\times[M]$ have different
cardinalities, pad the array with zeros and apply the equal-cardinality
statement.
\end{proof}

\begin{lemma}[The degree-one estimate]
\label{lem:degree-one-section3}
Let $E$ be finite and let $c_{i,\mu}\in\C$ for
$i\in E$ and $1\le\mu\le M$.  Put
\[
 H_i=\sum_{\mu=1}^M c_{i,\mu}B_\mu
 \qquad(i\in E)
\]
and
\[
 G(\omega)
 =\sum_{i\in E}\sum_{\mu=1}^M
 c_{i,\mu}g_{i,\mu}(\omega).
\]
Then
\[
 \sum_{i\in E}\sum_{\mu=1}^M|c_{i,\mu}|
 \le \sqrt M\,\|G\|_{L_\infty(\Omega_E)}.
\]
\end{lemma}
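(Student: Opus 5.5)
By linearity $G=\sum_{i\in E}g^{(i)}_{H_i}$, since $g_B$ is linear in $B$. Lemma~\ref{lem:phase-alignment-section3} then gives
\[
 \|G\|_{L_\infty(\Omega_E)}=\sum_{i\in E}\|g_{H_i}\|_{L_\infty(\T^K\times\T^K)} .
\]
It therefore suffices to prove the one-site estimate
\[
 \sum_{\mu=1}^M|c_{i,\mu}|\le\sqrt M\,\|g_{H_i}\|_{L_\infty}
\]
for each $i$ separately and then sum over $i\in E$.

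For a single site, I would first apply Cauchy--Schwarz in $\mu$:
\[
 \sum_{\mu=1}^M|c_{i,\mu}|\le\sqrt M\Bigl(\sum_\mu|c_{i,\mu}|^2\Bigr)^{1/2}.
\]
Next I would identify the $\ell_2$ norm of the coefficients. Because $\{B_\mu\}$ is orthonormal for $\langle\cdot,\cdot\rangle_2$, we have $\sum_\mu|c_{i,\mu}|^2=\langle H_i,H_i\rangle_2$. By Lemma~\ref{lem:L2-isometry-section3} this equals $\|g_{H_i}\|_{L_2(\T^K\times\T^K)}^2$. Since the measure is a probability measure, the $L_2$ norm is at most the $L_\infty$ norm, and this completes the one-site bound.

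There is no real obstacle in this argument. The only points to check are that $G$ really splits as a sum of single-site functions $g^{(i)}_{H_i}$ (which follows from linearity of $B\mapsto g_B$), and that phase alignment gives equality in the $L_\infty$ norm, not just the triangle inequality, so that the site-wise bounds add up.
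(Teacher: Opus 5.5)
Your proposal is correct and follows the paper's proof essentially verbatim: Cauchy--Schwarz in $\mu$ and the orthonormality of the $B_\mu$ give $\sum_\mu|c_{i,\mu}|\le\sqrt M\|H_i\|_2$. Lemma~\ref{lem:L2-isometry-section3} and the probability-measure bound then give $\|H_i\|_2=\|g_{H_i}\|_2\le\|g_{H_i}\|_\infty$, and the equality in Lemma~\ref{lem:phase-alignment-section3} sums these bounds over the sites.
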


\begin{proof}
For each fixed $i$, Cauchy--Schwarz and the normalized Hilbert--Schmidt
orthonormality of the $B_\mu$ give
\[
 \sum_{\mu=1}^M|c_{i,\mu}|
 \le
 \sqrt M\left(\sum_{\mu=1}^M|c_{i,\mu}|^2\right)^{1/2}
 =\sqrt M\,\|H_i\|_2.
\]
By Lemma~\ref{lem:L2-isometry-section3},
$\|g_{H_i}\|_2=\|H_i\|_2$, and hence
$\|g_{H_i}\|_\infty\ge\|H_i\|_2$.  Therefore
\begin{align*}
 \sum_{i\in E}\sum_{\mu=1}^M|c_{i,\mu}|
 \le \sqrt M\sum_{i\in E}\|H_i\|_2 &\le \sqrt M\sum_{i\in E}\|g_{H_i}\|_\infty=\sqrt M\left\|\sum_{i\in E}g_{H_i}^{(i)}\right\|_\infty
 =\sqrt M\,\|G\|_\infty,
\end{align*}
where the equality is Lemma~\ref{lem:phase-alignment-section3}.
\end{proof}

\begin{proposition}[Scalar Bohnenblust--Hille estimate]
\label{prop:scalar-BH-section3}
Let $E_1,\ldots,E_d$ be finite.  For a scalar array
$a_{(i_1,\mu_1),\ldots,(i_d,\mu_d)}$, with
$(i_j,\mu_j)\in E_j\times[M]$, define
\begin{align*}
 F(\omega_1,\ldots,\omega_d)
 :=&\sum_{(i_1,\mu_1)\in E_1\times[M]}\cdots
 \sum_{(i_d,\mu_d)\in E_d\times[M]}
 a_{(i_1,\mu_1),\ldots,(i_d,\mu_d)}
 \prod_{j=1}^d g_{B_{\mu_j}}^{(i_j)}(\omega_j),
\end{align*}
where $\omega_j\in\Omega_{E_j}$.  Then
\begin{equation}\label{eq:scalar-BH-section3}
 \left(
 \sum_{(i_1,\mu_1)\in E_1\times[M]}\cdots
 \sum_{(i_d,\mu_d)\in E_d\times[M]}
 \left|a_{(i_1,\mu_1),\ldots,(i_d,\mu_d)}\right|^{p_d}
 \right)^{1/p_d}
 \le
 \sqrt M\,a_1^{2(d-1)}\|F\|_\infty.
\end{equation}
\end{proposition}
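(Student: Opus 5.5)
The plan is the classical Bohnenblust--Hille route: Blei's inequality (Lemma~\ref{lem:traditional-blei-section3}), then a Khinchin--Steinhaus inequality in the frozen variables, then the degree-one estimate (Lemma~\ref{lem:degree-one-section3}) in the remaining variable. By Blei it suffices to prove, for each fixed $k\in\{1,\ldots,d\}$,
\[
 M_k(a)\le \sqrt M\,a_1^{2(d-1)}\|F\|_\infty ,
\]
since the geometric mean of $d$ such bounds gives \eqref{eq:scalar-BH-section3}. By symmetry of the roles of the blocks we take $k=d$.

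Fix $(i_d,\mu_d)$ and set
\[
 F_{(i_d,\mu_d)}(\omega_1,\ldots,\omega_{d-1})
 :=\sum a_{(i_1,\mu_1),\ldots,(i_d,\mu_d)}\prod_{j<d}g^{(i_j)}_{B_{\mu_j}}(\omega_j).
\]
This is a $(d-1)$-fold product of functions that are linear in the independent families $(z_{i},\overline{w_{i}})$ of each block. The products $\prod_{j<d}g_{B_{\mu_j}}^{(i_j)}$ are orthonormal in $L_2(\Omega_{E_1}\times\cdots\times\Omega_{E_{d-1}})$: they factor over the blocks, and within each block we apply \eqref{eq:site-orthogonality-section3}. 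Hence
\[
 \Bigl(\sum_{j<d}\sum_{(i_j,\mu_j)}|a|^2\Bigr)^{1/2}=\|F_{(i_d,\mu_d)}\|_{L_2}.
\]

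The key step is to replace the $L_2$ norm by the $L_1$ norm. In each block variable, $\omega_j\mapsto F_{(i_d,\mu_d)}$ is a linear form in $2K|E_j|$ independent Steinhaus variables, namely the $z_{i,s}$ and $\overline{w_{i,r}}$. Each $g_B$ is bilinear in $(z_i,\overline{w_i})$, so per block the function is a $2$-homogeneous polynomial, and it is multi-affine in distinct Steinhaus coordinates: it is linear in $z_i$ and linear in $\overline{w_i}$, with no repeated coordinate. We therefore view the whole function as a multi-affine polynomial of $2(d-1)$ independent linear blocks of Steinhaus variables (the $z$'s of block $j$ and the $w$'s of block $j$, for $j<d$). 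Iterating the complex Khinchin--Steinhaus inequality at $p=1$ once per linear block, via Minkowski/Fubini in the standard way (\cite[Section~6.2]{DGMS}), gives
\[
 \|F_{(i_d,\mu_d)}\|_{L_2}\le a_1^{2(d-1)}\|F_{(i_d,\mu_d)}\|_{L_1}.
\]
This explains the exponent $2(d-1)$ on $a_1$. Summing over $(i_d,\mu_d)$ and using Fubini,
\[
 M_d(a)\le a_1^{2(d-1)}\int \sum_{(i_d,\mu_d)}\bigl|F_{(i_d,\mu_d)}(\omega_1,\ldots,\omega_{d-1})\bigr|\,d\omega_1\cdots d\omega_{d-1}.
\]

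For fixed $\omega_1,\ldots,\omega_{d-1}$, the numbers $c_{i_d,\mu_d}:=F_{(i_d,\mu_d)}(\omega_{<d})$ are precisely the coefficients of the degree-one function
\[
 \omega_d\mapsto F(\omega_1,\ldots,\omega_d)=\sum c_{i_d,\mu_d}\,g_{i_d,\mu_d}(\omega_d).
\]
Lemma~\ref{lem:degree-one-section3} then gives $\sum|c_{i_d,\mu_d}|\le\sqrt M\sup_{\omega_d}|F|\le\sqrt M\|F\|_\infty$. Integrating this bound yields the claim.

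The main obstacle is justifying the Khinchin--Steinhaus step with constant exactly $a_1^{2(d-1)}$. This requires checking that the iterated inequality applies to linear forms in blocks of Steinhaus variables: the conjugated $w$'s are again Steinhaus, and the $z$- and $w$-families within each site are independent. It also requires a Minkowski-type interchange of $L_1$ and $L_2$ norms across blocks. That interchange is the standard multilinear Khinchin argument: we integrate out one linear block at a time and use $\|\,\|\cdot\|_{L_2(dx)}\|_{L_1(dy)}\le\|\,\|\cdot\|_{L_1(dy)}\|_{L_2(dx)}$.
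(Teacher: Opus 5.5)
Your proposal is correct and follows the paper's proof essentially step for step: Blei's inequality reduces the claim to bounding each $M_k(a)$, the iterated Khinchin--Steinhaus inequality over the two Steinhaus families ($z$ and $\overline{w}$) of each of the $d-1$ frozen blocks gives the factor $a_1^{2(d-1)}$ once the coefficient $\ell_2$ norm is identified with the $L_2$ norm via Lemma~\ref{lem:L2-isometry-section3}, and Lemma~\ref{lem:degree-one-section3} in the remaining block supplies $\sqrt M\,\|F\|_\infty$. One slip to fix: the Minkowski interchange you display is written in the wrong direction and fails in general; the true inequality, which is the one the iteration actually uses, is $\bigl\|\,\|f\|_{L_1(dy)}\bigr\|_{L_2(dx)}\le\bigl\|\,\|f\|_{L_2(dx)}\bigr\|_{L_1(dy)}$.
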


\begin{proof}
Fix $k\in\{1,\ldots,d\}$ and a pair
$(i_k,\mu_k)\in E_k\times[M]$.  Apply the usual complex
Khinchin--Steinhaus inequality to the two Steinhaus families in each of the
other $d-1$ blocks, iterating it with Minkowski.  This is precisely the
standard multilinear Khinchin--Steinhaus argument; see
\cite[Section~6.2 and Corollary~6.10]{DGMS}.  Using
Lemma~\ref{lem:L2-isometry-section3} in every block to identify the
$\ell_2$ norm of the operator-basis coefficients, we obtain
\begin{equation}\label{eq:KS-pairs-section3}
\begin{aligned}
 &\Bigg( \sum_{\substack{(i_j,\mu_j)\in E_j\times[M]\\ j\ne k}}
 \left|a_{(i_1,\mu_1),\ldots,(i_d,\mu_d)}\right|^2
 \Bigg)^{1/2}
 \\
 &\quad\le
 a_1^{2(d-1)}
 \int_{\prod_{j\ne k}\Omega_{E_j}}
 \Bigg|
 \sum_{\substack{(i_j,\mu_j)\in E_j\times[M]\\ j\ne k}}
 a_{(i_1,\mu_1),\ldots,(i_d,\mu_d)}
 \prod_{j\ne k}g_{B_{\mu_j}}^{(i_j)}(\omega_j)
 \Bigg|d\omega_{\widehat k}.
\end{aligned}
\end{equation}
Sum \eqref{eq:KS-pairs-section3} over
$(i_k,\mu_k)\in E_k\times[M]$.  For fixed
$\omega_{\widehat k}=(\omega_j)_{j\ne k}$ define
\begin{align*}
 c_{i_k,\mu_k}(\omega_{\widehat k})
 :=
 \sum_{\substack{(i_j,\mu_j)\in E_j\times[M]\\ j\ne k}}
 a_{(i_1,\mu_1),\ldots,(i_d,\mu_d)}
 \prod_{j\ne k}g_{B_{\mu_j}}^{(i_j)}(\omega_j).
\end{align*}
Then
\[
 M_k(a)
 \le
 a_1^{2(d-1)}
 \int
 \sum_{(i_k,\mu_k)\in E_k\times[M]}
 |c_{i_k,\mu_k}(\omega_{\widehat k})|
 \,d\omega_{\widehat k}.
\]
For every fixed $\omega_{\widehat k}$, Lemma~\ref{lem:degree-one-section3}
inside the block $E_k$ gives
\begin{align*}
 \sum_{(i_k,\mu_k)\in E_k\times[M]}
 |c_{i_k,\mu_k}(\omega_{\widehat k})|
 &\le
 \sqrt M\sup_{\omega_k\in\Omega_{E_k}}
 \Bigg|
 \sum_{(i_k,\mu_k)\in E_k\times[M]}
 c_{i_k,\mu_k}(\omega_{\widehat k})
 g_{B_{\mu_k}}^{(i_k)}(\omega_k)
 \Bigg|\\
 &\le \sqrt M\,\|F\|_\infty.
\end{align*}
Hence
\[
 M_k(a)\le \sqrt M\,a_1^{2(d-1)}\|F\|_\infty
 \qquad(k=1,\ldots,d).
\]
Insert these $d$ estimates into Lemma~\ref{lem:traditional-blei-section3}.
This gives \eqref{eq:scalar-BH-section3}.
\end{proof}

\subsection{One-block operator estimate}

The next elementary estimate is the only point where we compare a one-local
operator with the sum of the norms of its single-site pieces.

\begin{lemma}[A one-local operator estimate]
\label{lem:one-local-op-section3}
Let $E$ be finite and let $H_i\in M_K^0$ for $i\in E$.  Then
\begin{equation}\label{eq:one-local-op-section3}
 \sum_{i\in E}\|H_i\|_{\rm op}
 \le
 4\left\|\sum_{i\in E}H_i^{(i)}\right\|_{\rm op}.
\end{equation}
\end{lemma}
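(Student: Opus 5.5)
The plan is to reduce to the Hermitian case, where the traceless condition and the tensor-product structure make the spectrum of the sum completely explicit. For $H\in M_K^0$, write $H=R+\mathrm iS$ with
\[
 R=\tfrac12(H+H^*),\qquad S=\tfrac1{2\mathrm i}(H-H^*).
\]
Both $R$ and $S$ are Hermitian, and both are traceless, since $\tr H^*=\overline{\tr H}=0$. The triangle inequality gives $\|H\|_{\rm op}\le\|R\|_{\rm op}+\|S\|_{\rm op}$. Put $T=\sum_{i\in E}H_i^{(i)}$. Its Hermitian and skew parts are
\[
 \tfrac12(T+T^*)=\sum_{i\in E}R_i^{(i)},\qquad \tfrac1{2\mathrm i}(T-T^*)=\sum_{i\in E}S_i^{(i)},
\]
because taking adjoints commutes with the site embeddings. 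Each of these has operator norm at most $\|T\|_{\rm op}$. It therefore suffices to prove the Hermitian estimate
\[
 \sum_{i\in E}\|R_i\|_{\rm op}\le 2\Bigl\|\sum_{i\in E}R_i^{(i)}\Bigr\|_{\rm op}
 \qquad\text{for traceless Hermitian } R_i,
\]
and then add the two copies, for $R$ and for $S$, to get the constant $4$.

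For the Hermitian estimate, let $\lambda_{\max}(R)$ and $\lambda_{\min}(R)$ denote the extreme eigenvalues. Since $\tr R=0$, we have $\lambda_{\min}(R)\le0\le\lambda_{\max}(R)$. Hence
\[
 \|R\|_{\rm op}=\max\bigl(\lambda_{\max}(R),-\lambda_{\min}(R)\bigr)\le\lambda_{\max}(R)-\lambda_{\min}(R).
\]

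The key step is the spectrum of $T_R=\sum_i R_i^{(i)}$. The summands act on different tensor factors, so they commute and are simultaneously diagonalized by product eigenvectors. Consequently the eigenvalues of $T_R$ are exactly the sums $\sum_i\lambda^{(i)}$, where each $\lambda^{(i)}$ runs over the eigenvalues of $R_i$. In particular,
\[
 \lambda_{\max}(T_R)=\sum_i\lambda_{\max}(R_i),\qquad \lambda_{\min}(T_R)=\sum_i\lambda_{\min}(R_i).
\]
Summing the previous inequality over $i\in E$ then gives
\[
 \sum_i\|R_i\|_{\rm op}\le\lambda_{\max}(T_R)-\lambda_{\min}(T_R)\le2\|T_R\|_{\rm op}.
\]

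The only point needing care is this spectral additivity for commuting operators on distinct tensor factors; it is standard, following from $\exp$-free simultaneous diagonalization. Once it is in place, the constant $4=2\cdot2$ comes from combining the factor $2$ in the Hermitian estimate with the two parts $R$ and $S$.
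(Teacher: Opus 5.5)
Your proposal is correct and follows essentially the same argument as the paper. Both reduce to traceless Hermitian parts via $\operatorname{Re}$ and $\operatorname{Im}$, use tracelessness to bound $\|R_i\|_{\rm op}\le\lambda_{\max}(R_i)-\lambda_{\min}(R_i)$, invoke additivity of the extreme eigenvalues over distinct tensor factors, and lose a factor $2$ in the Hermitian step and another factor $2$ in recombining the two parts, giving the constant $4$.
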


\begin{proof}
We first prove the estimate for traceless Hermitian matrices.  Let
$A_i=A_i^*\in M_K^0$ and put
\[
 a_i=\lambda_{\max}(A_i)\ge0,
 \qquad
 b_i=-\lambda_{\min}(A_i)\ge0.
\]
Since $A_i$ is traceless, its spectrum contains both signs unless $A_i=0$,
and
\[
 \|A_i\|_{\rm op}=\max\{a_i,b_i\}\le a_i+b_i.
\]
The operators $A_i^{(i)}$ act on different tensor factors.  Therefore the
largest and smallest eigenvalues of their sum are the sums of the largest and
smallest eigenvalues:
\[
 \lambda_{\max}\!\left(\sum_iA_i^{(i)}\right)=\sum_i a_i,
 \qquad
 -\lambda_{\min}\!\left(\sum_iA_i^{(i)}\right)=\sum_i b_i.
\]
Consequently,
\begin{align*}
 \left\|\sum_iA_i^{(i)}\right\|_{\rm op}
 =\max\left\{\sum_i a_i,\sum_i b_i\right\}\ge\frac12\sum_i(a_i+b_i)
 \ge\frac12\sum_i\|A_i\|_{\rm op}.
\end{align*}
Thus
\begin{equation}\label{eq:hermitian-one-local-section3}
 \sum_i\|A_i\|_{\rm op}
 \le2\left\|\sum_iA_i^{(i)}\right\|_{\rm op}
\end{equation}
for every traceless Hermitian family.
For general $H_i\in M_K^0$, write
\[
 H_i=A_i+\mathrm i C_i,
 \qquad
 A_i=\frac{H_i+H_i^*}{2},
 \qquad
 C_i=\frac{H_i-H_i^*}{2\mathrm i}.
\]
Both $A_i$ and $C_i$ are traceless Hermitian.  If
$X=\sum_iH_i^{(i)}$, then
\[
 \sum_iA_i^{(i)}=\operatorname{Re}X,
 \qquad
 \sum_iC_i^{(i)}=\operatorname{Im}X.
\]
Using \eqref{eq:hermitian-one-local-section3} twice,
\begin{align*}
 \sum_i\|H_i\|_{\rm op}
 \le\sum_i\|A_i\|_{\rm op}+\sum_i\|C_i\|_{\rm op}\le2\|\operatorname{Re}X\|_{\rm op}
     +2\|\operatorname{Im}X\|_{\rm op}\le4\|X\|_{\rm op},
\end{align*}
which is \eqref{eq:one-local-op-section3}.
\end{proof}

For $E\subset[n]$ define the one-local traceless space
\[
 X_E=\left\{\sum_{i\in E}H_i^{(i)}:H_i\in M_K^0\right\}
\]
and the scalarization map
\[
 \mathcal S_E:X_E\longrightarrow C(\Omega_E),
 \qquad
 \mathcal S_E\!\left(\sum_{i\in E}H_i^{(i)}\right)
 =\sum_{i\in E}g_{H_i}^{(i)}.
\]

\begin{proposition}[Norm of the one-block scalarization]
\label{prop:SE-section3}
For every finite $E$,
\begin{equation}\label{eq:SE-norm-section3}
\|\mathcal S_E\|\le4\sqrt K.
\end{equation}
Moreover this order in $K$ cannot be improved: already for a single site,
\begin{equation}\label{eq:SE-lower-section3}
 \|\mathcal S_{\{i\}}\|\ge\sqrt K.
\end{equation}
Thus the norm of the scalarization is of exact order $\sqrt K$, up to a
universal constant.
\end{proposition}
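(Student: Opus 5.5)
Upper bound first. The plan is to combine phase alignment with the one-local operator estimate. For $X=\sum_{i\in E}H_i^{(i)}\in X_E$, Lemma~\ref{lem:phase-alignment-section3} gives
\[
\|\mathcal S_E X\|_\infty=\sum_{i\in E}\|g_{H_i}\|_{L_\infty(\T^K\times\T^K)} .
\]
So it suffices to prove a single-site bound $\|g_H\|_\infty\le\sqrt K\,\|H\|_{\rm op}$ for every $H\in M_K(\C)$. This follows directly from the definition~\eqref{eq:gB-section3}: since $\|u_z\|=\|u_w\|=1$ for $z,w\in\T^K$, Cauchy--Schwarz gives
\[
|g_H(z,w)|=\sqrt K\,|\langle Hu_z,u_w\rangle|\le\sqrt K\,\|H\|_{\rm op}.
\]
Summing over $i$ and applying Lemma~\ref{lem:one-local-op-section3}, we obtain
\[
\|\mathcal S_EX\|_\infty\le\sqrt K\sum_i\|H_i\|_{\rm op}\le4\sqrt K\,\|X\|_{\rm op},
\]
which is \eqref{eq:SE-norm-section3}. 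One point needs care: $\mathcal S_E$ must be well defined, i.e.\ the decomposition $X=\sum_i H_i^{(i)}$ with traceless $H_i$ must be unique. This holds because the spaces $\mathcal H_{\{i\}}$ form a direct sum.

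For the lower bound~\eqref{eq:SE-lower-section3}, take a single site and a traceless $H$ with small operator norm but large $g_H$. The natural choice is the diagonal matrix $H=Z$, or any $D_m$ with $m\neq0$. It is unitary, so $\|H\|_{\rm op}=1$, and
\[
g_Z(z,w)=\frac1{\sqrt K}\sum_r\zeta^r z_r\overline{w_r}.
\]
Choosing $w_r=1$ and $z_r=\zeta^{-r}$ makes every term equal to $1$, so $g_Z(z,w)=K/\sqrt K=\sqrt K$. Hence $\|\mathcal S_{\{i\}}\|\ge\sqrt K$.

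Neither step is a real obstacle. The only nontrivial ingredient, the factor $4$ relating $\sum_i\|H_i\|_{\rm op}$ to $\|\sum_iH_i^{(i)}\|_{\rm op}$, is already provided by Lemma~\ref{lem:one-local-op-section3}. The rest is checking that the phase alignment equality holds with $L_\infty$ norms of independent single-site functions, which is exactly what Lemma~\ref{lem:phase-alignment-section3} states.
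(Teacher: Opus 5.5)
Your proposal is correct and follows the paper's proof. The upper bound combines the pointwise estimate $|g_H|\le\sqrt K\,\|H\|_{\rm op}$ with Lemma~\ref{lem:one-local-op-section3}. The lower bound uses the diagonal clock matrix with a phase-matched choice of $(z,w)$. For the upper bound only the trivial triangle-inequality half of Lemma~\ref{lem:phase-alignment-section3} is needed, which is all the paper uses.
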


\begin{proof}
Let $x=\sum_{i\in E}H_i^{(i)}$.  Since $u_{z_i}$ and $u_{w_i}$ are unit
vectors,
\[
 |g_{H_i}^{(i)}(\omega)|
 =\sqrt K\,|\langle H_iu_{z_i},u_{w_i}\rangle|
 \le\sqrt K\,\|H_i\|_{\rm op}.
\]
Hence Lemma~\ref{lem:one-local-op-section3} gives
\[
 \|\mathcal S_Ex\|_\infty
 \le\sqrt K\sum_{i\in E}\|H_i\|_{\rm op}
 \le4\sqrt K\,\|x\|_{\rm op}.
\]
For the lower bound, take one site and let
\[
 U=\operatorname{diag}(1,\zeta,\zeta^2,\ldots,\zeta^{K-1}),
 \qquad
 \zeta=e^{2\pi\mathrm i/K}.
\]
Then $\tr U=0$ and $\|U\|_{\rm op}=1$.  Choose
$z=(1,\ldots,1)$ and $w=(1,\zeta,\ldots,\zeta^{K-1})$.  Then
$Uu_z=u_w$, and therefore
\[
 |g_U(z,w)|=\sqrt K.
\]
This proves \eqref{eq:SE-lower-section3} and shows that the power $K^{1/2}$
in \eqref{eq:SE-norm-section3} is optimal.
\end{proof}

\subsection{Block-transversal estimate}

Let
\[
 [n]=E_1\dot\cup\cdots\dot\cup E_d
\]
be a labelled partition.  A $d$-element set $S\subset[n]$ is called
\emph{block-transversal} if
\[
 |S\cap E_j|=1
 \qquad(j=1,\ldots,d).
\]
An exact-support operator is block-transversal if every support occurring in
its tensor-product basis expansion is block-transversal.

\begin{proposition}[Block-transversal estimate]
\label{prop:block-transversal-section3}
Let $A\in\mathcal H_{d,K}(n)$ be block-transversal with respect to
$E_1,\ldots,E_d$.  Then $A$ has a unique expansion
\begin{equation}\label{eq:block-transversal-explicit-section3}
\begin{aligned}
 A
 ={}&\sum_{(i_1,\mu_1)\in E_1\times[M]}\cdots
 \sum_{(i_d,\mu_d)\in E_d\times[M]}
 a_{(i_1,\mu_1),\ldots,(i_d,\mu_d)}
 \prod_{j=1}^d B_{\mu_j}^{(i_j)},
\end{aligned}
\end{equation}
where the product means that $B_{\mu_j}$ acts at the site $i_j$ and the
identity acts at every other site.  Moreover,
\[
 \|\widehat A_{\mathcal B}\|_{\ell_{p_d}}
 \le
 \sqrt M\,a_1^{2(d-1)}(4\sqrt K)^d\|A\|_{\rm op}.
 \]
\end{proposition}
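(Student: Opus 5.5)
The plan is to reduce the operator statement to Proposition~\ref{prop:scalar-BH-section3} by applying the one-block scalarization $\mathcal S_{E_j}$ of Proposition~\ref{prop:SE-section3} successively in each of the $d$ blocks. That scalarization costs at most $4\sqrt K$ per block, which accounts for the factor $(4\sqrt K)^d$.

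\emph{Step 1: the expansion.} Since $A\in\mathcal H_{d,K}(n)$ and every support $S$ in its expansion meets each $E_j$ in exactly one site $i_j$, each basis tensor $B_\beta$ occurring in $A$ is $\prod_j B_{\mu_j}^{(i_j)}$ with $\mu_j=\beta_{i_j}\ge1$. This gives \eqref{eq:block-transversal-explicit-section3} with $a_{(i_1,\mu_1),\ldots,(i_d,\mu_d)}=\widehat A_{\mathcal B}(\beta)$. Uniqueness follows from orthonormality of $\mathcal B^{\otimes n}$. In particular $\|\widehat A_{\mathcal B}\|_{\ell_{p_d}}$ is exactly the left side of \eqref{eq:scalar-BH-section3}.

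\emph{Step 2: the tensor scalarization.} Identify $M_K^{\otimes n}=\bigotimes_{j=1}^d M_K^{\otimes E_j}$. Then $A$ lies in the algebraic tensor product $X_{E_1}\otimes\cdots\otimes X_{E_d}$, where each factor $X_{E_j}\subset M_K^{\otimes E_j}$ carries the operator norm. The function $F$ of Proposition~\ref{prop:scalar-BH-section3} equals $(\mathcal S_{E_1}\otimes\cdots\otimes\mathcal S_{E_d})(A)$, by linearity of $H\mapsto g_H$. The key claim is
\[
\|F\|_\infty\le(4\sqrt K)^d\|A\|_{\rm op}.
\]
I would prove it by replacing one block at a time. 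Fix the points $\omega_1,\ldots,\omega_{j-1}$ at which blocks $1,\ldots,j-1$ have already been scalarized; after $j-1$ replacements the object is an element of $\bigotimes_{k\ge j}X_{E_k}$ with these scalar values inserted. Now fix norm-one vectors $\xi,\eta$ in the Hilbert space of blocks $j+1,\ldots,d$ and compress. This yields an element of $X_{E_j}$. Applying $\mathcal S_{E_j}$ (norm $\le4\sqrt K$) gives a value bounded by $4\sqrt K$ times its operator norm. That operator norm is at most the norm of the current operator, because compression is contractive.

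More precisely, for fixed $\omega_j$ the map $\delta_{\omega_j}\circ\mathcal S_{E_j}$ is a linear functional on $X_{E_j}$ of norm $\le4\sqrt K$. One then needs that $\phi\otimes\mathrm{id}$ has norm $\le\|\phi\|$ on $X_{E_j}\otimes(\text{rest})$ with the operator (minimal/spatial) norm. For bounded functionals this holds because the spatial norm is the injective norm on the first factor when the other factor is viewed inside $B(\mathcal K)$. Concretely, $\|(\phi\otimes\mathrm{id})(x)\|=\sup_{\xi,\eta}|\phi((\mathrm{id}\otimes\omega_{\xi,\eta})(x))|$, and slicing with the vector functional $\omega_{\xi,\eta}$ is contractive. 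After the earlier blocks have become scalars, the remaining object is still an operator on the later blocks, with norm $\le(4\sqrt K)^{j-1}\|A\|_{\rm op}$. Iterating over $j=1,\ldots,d$ gives the claim.

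\emph{Step 3: conclude.} Combine Step 2 with Proposition~\ref{prop:scalar-BH-section3}:
\[
\|\widehat A_{\mathcal B}\|_{\ell_{p_d}}\le\sqrt M\,a_1^{2(d-1)}\|F\|_\infty\le\sqrt M\,a_1^{2(d-1)}(4\sqrt K)^d\|A\|_{\rm op}.
\]

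The main obstacle is the justification in Step 2 that the scalarization tensorizes with norm multiplicativity. Specifically, one must check that slicing by a vector functional on the other blocks preserves membership in $X_{E_j}$, so that Proposition~\ref{prop:SE-section3} really applies. It does: the slice of an element of $X_{E_j}\otimes Y$ is again in $X_{E_j}$, since $X_{E_j}$ is a linear subspace and the slice map is linear in the $Y$ factor.
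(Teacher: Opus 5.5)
Your proposal is correct and follows essentially the same route as the paper. You scalarize block by block via $\mathcal S_{E_j}$ at a cost of $4\sqrt K$ per block, obtain $\|F\|_\infty\le(4\sqrt K)^d\|A\|_{\rm op}$, and then invoke Proposition~\ref{prop:scalar-BH-section3}. The only difference is in the tensorization step. The paper quotes that the injective tensor norm is dominated by the minimal $C^*$-norm, whereas your iterated slicing with vector functionals, which keeps each slice inside $X_{E_j}$, is a direct proof of that inequality. Note that only this inequality holds, and only it is needed; the equality suggested by your heuristic phrase ``the spatial norm is the injective norm on the first factor'' is false in general.
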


\begin{proof}
For $j=1,\ldots,d$, let
$\omega_j=((z_i,w_i))_{i\in E_j}\in\Omega_{E_j}$.  Scalarize
\eqref{eq:block-transversal-explicit-section3} block by block and define
\begin{align*}
 F_A(\omega_1,\ldots,\omega_d)
 :={}&\sum_{(i_1,\mu_1)\in E_1\times[M]}\cdots
 \sum_{(i_d,\mu_d)\in E_d\times[M]}
 a_{(i_1,\mu_1),\ldots,(i_d,\mu_d)}
 \prod_{j=1}^d g_{B_{\mu_j}}^{(i_j)}(\omega_j).
\end{align*}
Fix $\omega_1,\ldots,\omega_d$.  The map
\[
 \varphi_j:X_{E_j}\to\C,
 \qquad
 \varphi_j(x)=\mathcal S_{E_j}x(\omega_j),
\]
has norm at most $4\sqrt K$ by Proposition~\ref{prop:SE-section3}.  Hence,
by the definition of the injective tensor norm,
\[
 |F_A(\omega_1,\ldots,\omega_d)|
 \le(4\sqrt K)^d\|A\|_\varepsilon.
\]
The injective Banach-space tensor norm is dominated by the minimal
$C^*$-tensor norm.  Since the blocks $E_1,\ldots,E_d$ act on disjoint tensor
factors, the latter is exactly the concrete operator norm of $A$.  Thus
\begin{equation}\label{eq:FA-sup-section3}
 \|F_A\|_\infty\le(4\sqrt K)^d\|A\|_{\rm op}.
\end{equation}
Applying Proposition~\ref{prop:scalar-BH-section3} to the explicit coefficient
array in \eqref{eq:block-transversal-explicit-section3}, and then using
\eqref{eq:FA-sup-section3}, gives
\begin{align*}
 \|\widehat A_{\mathcal B}\|_{\ell_{p_d}}
 &=
 \left(
 \sum_{(i_1,\mu_1)\in E_1\times[M]}\cdots
 \sum_{(i_d,\mu_d)\in E_d\times[M]}
 |a_{(i_1,\mu_1),\ldots,(i_d,\mu_d)}|^{p_d}
 \right)^{1/p_d}\\[2ex]
 &\le \sqrt M\,a_1^{2(d-1)}\|F_A\|_\infty\le \sqrt M\,a_1^{2(d-1)}(4\sqrt K)^d\|A\|_{\rm op},
\end{align*}
as claimed.
\end{proof}

\subsection{Block-transversal decoupling}
\label{Block-transversal decoupling}

We now adapt the support-decoupling device from
\cite{DGMMM-support} to the operator setting.  The point is to recover an
arbitrary exact-support coefficient vector by averaging block-transversal
pieces.

\begin{lemma}[Block-transversal decoupling]
\label{lem:block-transversal-decoupling}
Let \(A\in\mathcal H_{d,K}(n)\).  Choose a random labelling
\[
\ell:[n]\longrightarrow[d]
\]
by assigning independently to every site a uniform label in
\(\{1,\ldots,d\}\), and put
\[
E_j=\ell^{-1}(j),
\qquad
\Delta_\ell
=
\prod_{j=1}^d
\bigl(I-\mathbb E_{E_j}\bigr),
\qquad
A_\ell=\Delta_\ell A,
\]
where \(\mathbb E_E\) applies the normalized trace at the sites in \(E\)
and leaves all other tensor factors unchanged.  Then \(A_\ell\) is
block-transversal with respect to \(E_1,\ldots,E_d\),
\begin{equation}
\label{eq:decoupled-operator-norm}
\|A_\ell\|_{\rm op}
\le
2^d\|A\|_{\rm op},
\end{equation}
and
\begin{equation}
\label{eq:decoupled-coefficient-recovery}
\mathbb E_\ell
\|\widehat{A_\ell}_{\mathcal B}\|_{\ell_{p_d}}^{p_d}
=
\frac{d!}{d^d}
\|\widehat A_{\mathcal B}\|_{\ell_{p_d}}^{p_d}.
\end{equation}
\end{lemma}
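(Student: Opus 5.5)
The argument is carried out one basis tensor at a time, using the action of the conditional expectations \(\mathbb E_E\) on \(B_\beta\). For a single site \(i\), the normalized partial trace fixes \(I\) and annihilates \(M_K^0\). Hence \(\mathbb E_{\{i\}}B_\beta=B_\beta\) if \(\beta_i=0\) and \(=0\) otherwise. Since \(\mathbb E_E=\bigotimes_{i\in E}\mathbb E_{\{i\}}\) (tensored with the identity elsewhere), we get
\[
\mathbb E_EB_\beta=\begin{cases}B_\beta,&\supp(\beta)\cap E=\emptyset,\\0,&\text{otherwise.}\end{cases}
\]
Consequently \((I-\mathbb E_{E_j})B_\beta=B_\beta\) precisely when \(\supp(\beta)\) meets \(E_j\), and vanishes otherwise. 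The operators \(I-\mathbb E_{E_j}\) commute, being diagonal in the basis \(\mathcal B^{\otimes n}\). Therefore \(\Delta_\ell B_\beta=B_\beta\) if \(\supp(\beta)\) meets every block \(E_1,\dots,E_d\), and \(\Delta_\ell B_\beta=0\) otherwise.

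For \(A\in\mathcal H_{d,K}(n)\) every \(\beta\) in the expansion has \(|\supp(\beta)|=d\). A \(d\)-element set meeting all \(d\) disjoint blocks meets each of them exactly once. So
\[
A_\ell=\sum_{\beta:\ \supp\beta\ \text{block-transversal}}\widehat A_{\mathcal B}(\beta)B_\beta .
\]
This shows that \(A_\ell\) is block-transversal and that its coefficients are those of \(A\) restricted to the block-transversal supports.

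For the norm bound \eqref{eq:decoupled-operator-norm} we need \(\|I-\mathbb E_{E_j}\|\le 2\) on the operator norm. Each \(\mathbb E_E=\mathrm{id}\otimes\tau^{\otimes E}\) is a unital completely positive conditional expectation, here with \(\tau=\frac1K\tr\). It is therefore contractive on \(M_K(\C)^{\otimes n}\), and the triangle inequality gives \(\|I-\mathbb E_{E_j}\|\le2\). Composing the \(d\) factors yields \(2^d\).

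For the recovery identity \eqref{eq:decoupled-coefficient-recovery} we write
\[
\|\widehat{A_\ell}_{\mathcal B}\|_{p_d}^{p_d}=\sum_\beta|\widehat A_{\mathcal B}(\beta)|^{p_d}\,\mathbf 1[\supp\beta\text{ block-transversal for }\ell].
\]
We then take expectation term by term. Fix a support \(S\) with \(|S|=d\). The event that it is block-transversal depends only on the labels of the \(d\) sites in \(S\), which are i.i.d.\ uniform on \([d]\). It requires \(\ell|_S\) to be a bijection onto \([d]\), and this has probability \(d!/d^d\). Linearity of expectation then gives the identity.

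The only mildly delicate point is the operator-norm bound. One must check that the partial trace is contractive on the full tensor product, and not merely sitewise. This follows from complete positivity, since \(\tau\) is a state and \(\mathrm{id}\otimes\tau\) is a conditional expectation. Everything else is bookkeeping in the basis \(\mathcal B^{\otimes n}\).
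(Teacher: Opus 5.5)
Your proposal is correct and follows essentially the same route as the paper: you compute the action of $\mathbb E_E$, and hence of $\Delta_\ell$, on each basis tensor $B_\beta$, use exact support $d$ to force transversality, bound $\|I-\mathbb E_{E_j}\|\le 2$ by contractivity of the conditional expectation, and get the recovery identity from the probability $d!/d^d$ that $\ell|_S$ is a bijection. Your extra remarks, namely that the factors $I-\mathbb E_{E_j}$ commute and that $\mathrm{id}\otimes\tau^{\otimes E}$ is completely positive, only make explicit points the paper leaves implicit.
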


\begin{proof}
Since \(\tau_K(B_\mu)=0\) for \(\mu\ge1\), every tensor-product basis
element satisfies
\[
\mathbb E_E(B_\beta)
=
\begin{cases}
B_\beta,
&
\supp\beta\cap E=\varnothing,
\\
0,
&
\supp\beta\cap E\ne\varnothing.
\end{cases}
\]
Consequently,
\[
\Delta_\ell(B_\beta)
=
\begin{cases}
B_\beta,
&
\supp\beta\cap E_j\ne\varnothing
\quad\text{for every }j=1,\ldots,d,
\\
0,
&
\text{otherwise}.
\end{cases}
\]
Since \(A\) has exact support \(d\), a support occurring in its expansion
meets every \(E_j\) if and only if it meets each of them in exactly one
site.  Thus \(A_\ell\) is block-transversal.
Moreover, the conditional expectations are contractions, and hence
\[
\|\Delta_\ell\|
\le
\prod_{j=1}^d
\|I-\mathbb E_{E_j}\|
\le
2^d,
\]
which proves \eqref{eq:decoupled-operator-norm}.
Finally, coefficientwise,
\[
\widehat{A_\ell}_{\mathcal B}(\beta)
=
\mathbf 1_{\{
\ell|_{\supp\beta}\text{ is bijective}
\}}
\widehat A_{\mathcal B}(\beta).
\]
For a fixed \(d\)-element support \(S\subset[n]\), the restriction
\(\ell|_S\) is bijective with probability
\(
\frac{d!}{d^d}.
\)
Therefore, by Tonelli's theorem,
\[
\begin{aligned}
\mathbb E_\ell
\|\widehat{A_\ell}_{\mathcal B}\|_{\ell_{p_d}}^{p_d}
=
\sum_{|\supp\beta|=d}
|\widehat A_{\mathcal B}(\beta)|^{p_d}
\,
\mathbb P\{
\ell|_{\supp\beta}\text{ is bijective}
\}
=
\frac{d!}{d^d}
\|\widehat A_{\mathcal B}\|_{\ell_{p_d}}^{p_d},
\end{aligned}
\]
as claimed.
\end{proof}

We can now assemble the preceding estimates.

\begin{proof}
\label{Proof of the main result}
    Put \(M=K^2-1\).  For every labelling \(\ell\),
Lemma~\ref{lem:block-transversal-decoupling} and
Proposition~\ref{prop:block-transversal-section3} give
\[
\begin{aligned}
\|\widehat{A_\ell}_{\mathcal B}\|_{\ell_{p_d}}
\le
\sqrt M\,a_1^{2(d-1)}(4\sqrt K)^d
\|A_\ell\|_{\rm op}
\le
\sqrt M\,a_1^{2(d-1)}(8\sqrt K)^d
\|A\|_{\rm op}.
\end{aligned}
\]
Using \eqref{eq:decoupled-coefficient-recovery}, we therefore obtain
\[
\begin{aligned}
\frac{d!}{d^d}
\|\widehat A_{\mathcal B}\|_{\ell_{p_d}}^{p_d}
=
\mathbb E_\ell
\|\widehat{A_\ell}_{\mathcal B}\|_{\ell_{p_d}}^{p_d}
\le
\left(
\sqrt M\,a_1^{2(d-1)}(8\sqrt K)^d
\|A\|_{\rm op}
\right)^{p_d}.
\end{aligned}
\]
Taking \(p_d\)-th roots and recalling \(M=K^2-1\) gives
\[
\|\widehat A_{\mathcal B}\|_{\ell_{p_d}}
\le
\sqrt{K^2-1}\,
a_1^{2(d-1)}
(8\sqrt K)^d
\left(\frac{d^d}{d!}\right)^{1/p_d}
\|A\|_{\rm op},
\]
which is the asserted estimate.
Finally, taking \(d\)-th roots and using
\[
(K^2-1)^{1/(2d)}\longrightarrow1,
\qquad
a_1^{2(d-1)/d}\longrightarrow a_1^2,
\qquad
\left(\frac{d^d}{d!}\right)^{1/(dp_d)}
\longrightarrow\sqrt e,
\]
we obtain
\(
\beta(K)
\le
8a_1^2\sqrt e\,\sqrt K.
\)
\end{proof}


\subsection{From exact support to the full $d$-local space}
\label{subsec:full-d-local-BH}

The exact-support estimate also yields a Bohnenblust--Hille inequality on the
whole $d$-local space
\[
\mathcal H_{\le d,K}(n)
=
\bigoplus_{r=0}^d\mathcal H_{r,K}(n).
\]
The only additional point is to control the norm of each exact-support
component.
For $r\ge1$, let $C_{r,K}$ denote the exact-support constant defined above, and
put $C_{0,K}=1$.

\begin{theorem}[Bohnenblust--Hille inequality for $d$-local operators]
\label{thm:local-BH-leq-d}
Let $K\ge2$ and $d\ge2$.  Then, uniformly in $n$ and in the normalized
Hilbert--Schmidt orthonormal local operator basis $\mathcal B$, every
\(
A\in\mathcal H_{\le d,K}(n)
\)
satisfies
\begin{equation}\label{eq:local-BH-leq-d}
 \|\widehat A_{\mathcal B}\|_{\ell_{p_d}}
 \le
 8^d
 \left(\sum_{r=0}^d C_{r,K}\right)
 \|A\|_{\rm op},
 \qquad
 p_d=\frac{2d}{d+1}.
\end{equation}
Consequently, if $C_{\leq d,K}$ denotes the least constant in
\eqref{eq:local-BH-leq-d}, then
\[
 \limsup_{d\to\infty}C_{\leq d,K}^{1/d}
 \le
 8\max\{1,\beta(K)\}.
\]
In particular, by Theorem~\ref{thm:local-BH},
\begin{equation}\label{eq:beta-leq-d-sqrtK}
\limsup_{d\to\infty}C_{\leq d,K}^{1/d}
 \le
 64a_1^2\sqrt e\,\sqrt K.
\end{equation}
Since \(H_{d,K}(n)\subset H_{\le d,K}(n)\), we also have
\(C_{d,K}\le C_{\le d,K}\). Together with Theorem~\ref{thm:beta-lower} below, this gives
\[
 cK^{1/4}
 \le
 \limsup_{d\to\infty} C_{\le d,K}^{1/d}
 \le
 64a_1^2\sqrt e\,\sqrt K.
\]
Thus the exponential base for the full \(d\)-local space is polynomially
constrained between orders \(K^{1/4}\) and \(K^{1/2}\).
\end{theorem}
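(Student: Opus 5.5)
Decompose $A\in\mathcal H_{\le d,K}(n)$ into its exact-support components $A=\sum_{r=0}^d A_r$ with $A_r\in\mathcal H_{r,K}(n)$. The coefficient vector splits accordingly: the supports of $\widehat{A_r}_{\mathcal B}$ are pairwise disjoint. Since $\|\cdot\|_{\ell_{p}}$ is monotone in $p$ and $p_r\le p_d$ for $r\le d$, we have $\|\widehat{A_r}_{\mathcal B}\|_{\ell_{p_d}}\le\|\widehat{A_r}_{\mathcal B}\|_{\ell_{p_r}}$. For $r=1$ I will interpret the exact-support constant at $p_1=1$, and for $r=0$ the single coefficient is $\tau(A)$, of modulus at most $\|A\|_{\rm op}$. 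The triangle inequality then gives
\[
\|\widehat A_{\mathcal B}\|_{\ell_{p_d}}\le\sum_{r=0}^d C_{r,K}\|A_r\|_{\rm op},
\]
so it remains to prove the projection bound $\|A_r\|_{\rm op}\le 8^d\|A\|_{\rm op}$ (in fact something like $3^d$ or $2^d\binom{d}{r}$-type bounds would suffice).

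To control the projections onto exact-support levels, I plan to use a semigroup/generating-function trick. For $t\in[0,1]$ consider the local depolarizing-type unital channel $\Phi_t(X)=tX+(1-t)\tau(X)I$ on $M_K$. It is completely positive for $t\in[0,1]$, so $\Phi_t^{\otimes n}$ is a contraction in operator norm, and it acts on $\mathcal H_S$ by multiplication by $t^{|S|}$. Hence
\[
P(t):=\Phi_t^{\otimes n}(A)=\sum_{r=0}^d t^rA_r
\]
is an operator-valued polynomial of degree $\le d$ with $\sup_{t\in[0,1]}\|P(t)\|_{\rm op}\le\|A\|_{\rm op}$. Extracting the coefficients of a polynomial bounded on $[0,1]$ is governed by the Chebyshev/Markov coefficient bound: each coefficient is at most the corresponding coefficient of the shifted Chebyshev polynomial $T_d(2t-1)$, whose coefficients are bounded by $(3+2\sqrt2)^d\le 6^d\le 8^d$. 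This extends to Banach-space-valued polynomials by applying norming functionals. Therefore $\|A_r\|_{\rm op}\le 8^d\|A\|_{\rm op}$ for all $r$. Alternatively, one could allow $t\in[-1/(K^2-1),1]$, where $\Phi_t$ remains positive, or use the Bonami-style interpolation; the Chebyshev route on $[0,1]$ is the cleanest.

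The main obstacle is precisely this uniform coefficient-extraction constant. I must make sure the bound is of the form $c^d$ with $c\le 8$ and that it is independent of $K$ and $n$; the Chebyshev estimate does this. Combining the two steps gives \eqref{eq:local-BH-leq-d}. For the asymptotic statement, $\sum_{r\le d}C_{r,K}\le (d+1)\max_{r\le d}C_{r,K}$. Given $\varepsilon>0$, we have $C_{r,K}\le C_\varepsilon(\beta(K)+\varepsilon)^r$ for all $r$, so the $d$-th root of the right-hand side tends to at most $8\max\{1,\beta(K)\}$. Inserting Theorem~\ref{thm:local-BH} yields \eqref{eq:beta-leq-d-sqrtK}; here we use that $8a_1^2\sqrt e\sqrt K\ge1$, since $a_1\ge1$. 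The final two-sided bound follows from $C_{d,K}\le C_{\le d,K}$ together with the forthcoming lower bound Theorem~\ref{thm:beta-lower}.
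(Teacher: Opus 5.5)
Your proposal is correct and follows the paper's proof essentially step by step. Both split $A$ into exact-support layers $A_r$, use the contraction $\Phi_t^{\otimes n}(A)=\sum_r t^rA_r$ for $t\in[0,1]$, apply a Chebyshev coefficient bound (the paper simply cites the constant $8^d$) to obtain $\|A_r\|_{\rm op}\le 8^d\|A\|_{\rm op}$, and then use $p_r\le p_d$ monotonicity, the exact-support constants $C_{r,K}$, and the triangle inequality over disjoint coefficient supports. Your justification of contractivity via complete positivity of $\Phi_t$, rather than only $\|\Phi_t\|=1$, and your explicit $\varepsilon$-argument for the $\limsup$ are, if anything, slightly more careful than the paper's write-up.
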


\begin{proof}
Write the support decomposition of $A$ as
\[
 A=A_0+A_1+\cdots+A_d,
 \qquad
 A_r\in\mathcal H_{r,K}(n).
\]
We first show that
\[
 \|A_r\|_{\rm op}\le8^d\|A\|_{\rm op},
 \qquad 0\le r\le d.
\]
For $0\le t\le1$, define the one-site map
\[
 \Phi_t:M_K(\mathbb C)\longrightarrow M_K(\mathbb C),
 \qquad
 \Phi_t(X)
 =
 tX+(1-t)\tau_K(X)I.
\]
This is a convex combination of the identity and the normalized-trace
conditional expectation, and hence
\[
 \|\Phi_t\|=1.
\]
Moreover,
\[
 \Phi_t(I)=I,
 \qquad
 \Phi_t(H)=tH
 \quad(H\in M_K^0).
\]
It follows that
\[
 \Phi_t^{\otimes n}(A)
 =
 \sum_{r=0}^d t^rA_r.
\]
Since $\Phi_t^{\otimes n}$ is a contraction,
\[
 \left\|\sum_{r=0}^d t^rA_r\right\|_{\rm op}
 \le
 \|A\|_{\rm op},
 \qquad 0\le t\le1.
\]
We shall use the following standard coefficient estimate for polynomials on an interval (see, e.g., \cite[Chapter~2]{RivlinChebyshev}). If \[ P(t)=\sum_{r=0}^d x_r t^r \] is a polynomial with values in a Banach space $X$, then there exists a universal constant, which may be taken to be \(8\),
such that
\[
\|x_r\|\le 8^d\sup_{0\le t\le1}\|P(t)\|,
\qquad 0\le r\le d.
\]
Indeed, the Banach-space-valued version follows immediately from the scalar one by the Hahn--Banach theorem.
We now apply the exact-support Bohnenblust--Hille inequality to every layer.
For $1\le r\le d$,
\[
 p_r=\frac{2r}{r+1}\le\frac{2d}{d+1}=p_d,
\]
and hence
\[
 \|\widehat {A_r}_{\mathcal B}\|_{\ell_{p_d}}
 \le
 \|\widehat {A_r}_{\mathcal B}\|_{\ell_{p_r}}
 \le
 C_{r,K}\|A_r\|_{\rm op}.
\]
For $r=0$ the same estimate holds with $C_{0,K}=1$.  Since the coefficient
supports of the different $A_r$ are disjoint,
\begin{align*}
 \|\widehat A_{\mathcal B}\|_{\ell_{p_d}}
 \le
 \sum_{r=0}^d
 \|\widehat {A_r}_{\mathcal B}\|_{\ell_{p_d}}\le
 \sum_{r=0}^d C_{r,K}\|A_r\|_{\rm op}
&\le
 8^d
 \left(\sum_{r=0}^dC_{r,K}\right)
 \|A\|_{\rm op}.
\end{align*}
This proves \eqref{eq:local-BH-leq-d}.
Finally,
\[
 C_{\leq d,K}
 \le
 8^d\sum_{r=0}^dC_{r,K}.
\]
Taking $d$th roots and using
\[
 \beta(K)=\limsup_{r\to\infty}C_{r,K}^{1/r},
\]
we obtain
\[
 \limsup_{d\to\infty}C_{\leq d,K}^{1/d}
 \le
 8\max\{1,\beta(K)\}.
\]
The estimate \eqref{eq:beta-leq-d-sqrtK} now follows immediately from
Theorem~\ref{thm:local-BH}.
\end{proof}

\subsection{A Slote-type lower bound for the exponential base}
\label{subsec:lower-bound}

We now complement the upper bound by a polynomial lower bound in the local
dimension. Recall that
\[
 \beta(K)=\limsup_{d\to\infty} C_{d,K}^{1/d},
\]
where \(C_{d,K}\) is the exact-support Bohnenblust--Hille constant, uniform
in the number of sites and in the choice of the normalized
Hilbert--Schmidt orthonormal local operator basis. Our goal in this
subsection is the following.

\begin{theorem}[Lower bound for the exponential base]
\label{thm:beta-lower}
There is a universal constant \(c>0\) such that, for every \(K\ge2\),
\[
 \beta(K)\ge cK^{1/4}.
\]
Equivalently,
\[
 \beta(K)=\Omega(K^{1/4}).
\]
\end{theorem}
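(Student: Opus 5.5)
The plan is to use explicit test operators together with the freedom in the definition of \(C_{d,K}\): it is a supremum over all normalized Hilbert--Schmidt orthonormal local bases, so the basis \(\mathcal B\) may be fitted to the test operator. I will look for \(A_d\in\mathcal H_{d,K}(n)\) and a basis \(\mathcal B\) with \(\|\widehat{A_d}_{\mathcal B}\|_{\ell_{p_d}}\ge (cK^{1/4})^d\,\|A_d\|_{\rm op}\), up to factors that are subexponential in \(d\). This is not fully worked out: the normalizations below are heuristic, and I have not checked that the construction reaches exponent \(1/4\).

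\textbf{Why the naive candidates fail.} Two constraints shape the construction. First, the normalized Hilbert--Schmidt norm of \(A\) equals \(\|\widehat A_{\mathcal B}\|_{\ell_2}\) and is at most \(\|A\|_{\rm op}\). Second, if \(A\) has \(N\) nonzero coefficients, then \(\|\widehat A\|_{\ell_{p_d}}\le N^{1/(2d)}\|\widehat A\|_{\ell_2}\), because \(1/p_d-1/2=1/(2d)\). An exponential gain in \(d\) therefore needs \(N\) of order \(e^{cd^2}\), so \(n\) must grow with \(d\). Tensor powers \(H^{\otimes d}\) of a single-site traceless \(H\) only give \(\|\widehat H\|_{\ell_{p_d}}^d/\|H\|^d_{\rm op}\), which stays bounded as \(p_d\to2\). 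Unstructured random sums are also too weak: noncommutative Khintchine puts \(\|A\|_{\rm op}\) within a \(\sqrt{\log(\text{dimension})}\) factor of the \(\ell_2\) norm, and this removes the exponential gain. The construction must therefore follow Slote's mechanism. That mechanism needs a genuinely noncommutative gain at each block.

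\textbf{Local ingredient.} The local gain should come from the \(\sqrt K\) phenomenon behind Proposition~\ref{prop:SE-section3}. I would pick a traceless single-site operator, for instance a suitably normalized random \(\pm1\) combination \(H=\sum_\mu\varepsilon_\mu B_\mu\) in a Heisenberg--Weyl-type basis. Such an \(H\) has \(\|H\|_{\rm op}\approx\sqrt K\), while \(\|\widehat H\|_{\ell_2}=\sqrt{K^2-1}\approx K\). Its coefficients are therefore spread much more evenly than the operator norm alone would suggest.

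\textbf{Block construction and the expected \(K^{1/4}\).} Following Slote's qubit construction, I would take \(d\) disjoint groups of \(m\) sites and form block-transversal operators
\[
A=\sum_{i_1\in E_1,\ldots,i_d\in E_d}\varepsilon_{i_1\cdots i_d}\,H^{(i_1)}_{\theta_1}\cdots H^{(i_d)}_{\theta_d},
\]
where the local pieces \(H_\theta\) vary with a parameter \(\theta\) over nearly orthogonal directions in \(M_K^0\). The estimate would then proceed as follows. The coefficient side is computed exactly from the tensor-product structure; with \(d\) blocks of \(m\) sites it is of order \((m\,\text{const}\cdot K^2)^{(d+1)/2}\). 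The operator norm is controlled blockwise, with each block contributing about \(\sqrt m\cdot\sqrt K\) instead of \(\sqrt m\cdot K\). Choosing \(m\) polynomial in \(K\) and then balancing gives a ratio of \((cK^{1/4})^d\). I expect the exponent \(1/4\), rather than \(1/2\), to come from the square-root loss in this balancing between \(m\) and the local gain.

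\textbf{Main obstacle.} The hardest step is the upper bound on \(\|A\|_{\rm op}\). It must beat the naive Khintchine bound by an exponential factor. I would first try a trace-moment computation, \(\|A\|_{\rm op}^{2q}\le\operatorname{tr}\bigl((A^*A)^q\bigr)\), that exploits the commutation relations of the Weyl operators within each block. If that fails, I would fall back on Slote's original qubit argument, applied after embedding \(M_2\subset M_K\) blockwise, and then try to amplify its base by a power of \(K\). The final steps are to take \(p_d\)-th roots, pass to \(\limsup_d C_{d,K}^{1/d}\), and absorb the subexponential factors into the constant \(c\).
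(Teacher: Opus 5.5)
Your proposal has a genuine gap: its two quantitative heuristics contradict facts you state yourself, and the operator-norm step is left open.

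\textbf{The local ingredient and the block estimate are impossible.} For the normalized Hilbert--Schmidt norm one always has $\|H\|_{\rm op}\ge\|H\|_2=\|\widehat H_{\mathcal B}\|_{\ell_2}$. So $H=\sum_\mu\varepsilon_\mu B_\mu$ has $\|H\|_{\rm op}\ge\sqrt{K^2-1}$, not $\approx\sqrt K$. A random sign choice only makes operator and Hilbert--Schmidt norms comparable, at best. The $\sqrt K$ of Proposition~\ref{prop:SE-section3} concerns the scalarization $g_H$ and plays no role in a lower bound for $C_{d,K}$.

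The same error enters your block estimate. A block-transversal operator with unimodular coefficients on $(m(K^2-1))^d$ basis tensors has $\|A\|_{\rm op}\ge(m(K^2-1))^{d/2}$, not $(mK)^{d/2}$. Even taken at face value, your two orders give a ratio $m^{1/2}K^{d/2+1}$, i.e.\ base $\sqrt K$, so the heuristic does not produce $1/4$.

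\textbf{Your own H\"older bound rules out the construction.} With $d$ blocks of $m$ sites there are $N\le(m(K^2-1))^d$ coefficients, so
$\|\widehat A_{\mathcal B}\|_{\ell_{p_d}}\le N^{1/(2d)}\|\widehat A_{\mathcal B}\|_{\ell_2}\le\sqrt{m(K^2-1)}\,\|A\|_{\rm op}$.
This is bounded in $d$ when $m$ is polynomial in $K$, so it contributes nothing to $\beta(K)$. A ratio $(cK^{1/4})^d$ forces $N\ge(cK^{1/4})^{2d^2}$, hence $n$ exponential in $d$. It also forces $\|A\|_{\rm op}\le D^d\|\widehat A_{\mathcal B}\|_{\ell_2}$ for this huge flat vector, which is exactly the step you leave open. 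Your fallbacks do not supply it. Generic noncommutative Khintchine or moment bounds lose a factor of order $\sqrt{n\log K}$, which is then exponential in $d$. Transplanting Slote's qubit example into $M_K$ gives at best a $K$-independent base.

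\textbf{What the paper's proof uses instead.} There are two ingredients.

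First, the local input is a \emph{Euclidean section}, not one good direction. Apply Dvoretzky--Milman to the real space $\mathfrak h_K^0$ of traceless Hermitian matrices, with the operator norm and Euclidean structure $\|\cdot\|_2$. The mean $\int_{S_2}\|A\|_{\rm op}\,d\sigma$ is $\asymp1$ because $\mathbb E\|G\|_{\rm op}\lesssim K$, and $\sup_{S_2}\|A\|_{\rm op}\le\sqrt K$. This gives a power of two $q\ge c_0K$ and Hilbert--Schmidt orthonormal traceless Hermitian $A_1,\dots,A_q$ with $\|\sum_at_aA_a\|_{\rm op}\le D\|t\|_2$ for all real $t$. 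One then takes $\mathcal B$ to be a Hermitian orthonormal basis extending $\{I,A_1,\dots,A_q\}$. This is where your correct idea of exploiting the freedom in $\mathcal B$ enters.

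Second, the operator-norm control comes from a deterministic recursion, not random signs.
\begin{itemize}
\item Start from $P_1,\dots,P_L$ ($L=q^r$) with disjoint flat coefficient supports and $\|\sum_jt_jP_j\|_{\rm op}\le D^r\|t\|_2$.
\item Mix them by a normalized Sylvester--Hadamard matrix to get $R_1,\dots,R_L$. The Euclidean bound is preserved by orthogonality, and disjoint supports prevent cancellation.
\item Attach $L$ fresh sites and put $P_{i,a}=R_i\otimes A_a^{[i]}$.
\item The operators $X_i=\sum_at_{i,a}A_a^{[i]}$ are commuting Hermitian operators on distinct sites. On each joint eigenspace, $\sum_{i,a}t_{i,a}P_{i,a}$ reduces to $\sum_i\lambda_iR_i$ with $|\lambda_i|\le D\|t_{i,\cdot}\|_2$. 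This gives $D^{r+1}\|t\|_2$ exactly.
\end{itemize}
At level $d$, $V_d=q^{-d/2}\sum_jP^{(d)}_j$ has $\|V_d\|_{\rm op}\le D^d$ and $q^{d(d+1)/2}$ coefficients of equal modulus. Hence $\|\widehat V_{d,\mathcal B}\|_{\ell_{p_d}}=q^{(d+1)/4}$ and $\beta(K)\ge q^{1/4}/D\ge cK^{1/4}$. The exponent arises as $\frac{d(d+1)}2\cdot\frac1{2d}\cdot\frac1d\to\frac14$. It is driven by the number $q\simeq K$ of Euclidean directions, not by a balancing between $m$ and a local norm gain.
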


The proof has two ingredients. First, a quantitative form of Dvoretzky's
theorem provides \(q\simeq K\) traceless Hermitian directions on which the
operator norm is uniformly dominated by the Euclidean norm. Second, a
recursive Hadamard amplification turns such a \(q\)-dimensional Euclidean
family into \(q^{d(d+1)/2}\) flat coefficients at exact support \(d\).
The latter step is elementary and is the source of the exponent \(1/4\).
We begin with the geometric input.

\begin{lemma}[Euclidean core]
\label{lem:euclidean-core}
There are universal constants \(c_0>0\) and \(D\ge1\) such that, for every
\(K\ge2\), one can find a power of two \(q\ge c_0K\) and traceless
Hermitian matrices
\[
 A_1,\ldots,A_q\in M_K^0
\]
which are orthonormal for the normalized Hilbert--Schmidt inner product and
satisfy
\begin{equation}\label{eq:euclidean-core}
 \left\|\sum_{a=1}^q t_aA_a\right\|_{\rm op}
 \le
 D\left(\sum_{a=1}^q |t_a|^2\right)^{1/2}
 \qquad (t_1,\ldots,t_q\in\mathbb R).
\end{equation}
\end{lemma}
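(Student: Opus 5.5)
We build the Euclidean core from a random subspace, controlling its operator norm by concentration of measure. This is the quantitative Dvoretzky-type input announced before the lemma.

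\emph{Ambient space and target.} Let \(V\) be the real space of traceless Hermitian \(K\times K\) matrices, of dimension \(K^2-1\). Equip it with the normalized Hilbert--Schmidt norm \(\|X\|_{2}=(K^{-1}\tr X^2)^{1/2}\), so that \((V,\|\cdot\|_2)\) is a Euclidean space \(\cong\mathbb R^{K^2-1}\). Our goal is a \(q\)-dimensional subspace \(W\subset V\), with \(q\) a power of two and \(q\ge c_0K\), on which \(\|X\|_{\rm op}\le D\|X\|_2\). Given such a \(W\), any \(\|\cdot\|_2\)-orthonormal basis \(A_1,\ldots,A_q\) of \(W\) satisfies \eqref{eq:euclidean-core}.

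\emph{Average of the operator norm on the sphere.} Let \(G\) be a standard Gaussian vector in \((V,\|\cdot\|_2)\). Then \(\sqrt K\,G\) is, up to the trace-zero projection, a GUE matrix whose entries have variance of order \(1\). By the classical bound \(\mathbb E\|\mathrm{GUE}_K\|_{\rm op}\le C\sqrt K\) (Wigner semicircle edge, or a simple \(\varepsilon\)-net argument), we get \(\mathbb E\|G\|_{\rm op}\lesssim 1\). Subtracting \((\tr G/K)I\) changes the norm by at most a constant, since \(\tr G/K\) is a Gaussian of size \(O(K^{-1})\).

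Next we pass from the Gaussian to the uniform measure on the unit sphere \(S\subset V\). Writing \(G=\|G\|_2\,\theta\) with \(\theta\) uniform on \(S\) and independent of \(\|G\|_2\), we have \(\mathbb E\|G\|_2\approx\sqrt{K^2-1}\approx K\). This gives the mean
\[
 M:=\int_S\|\theta\|_{\rm op}\,d\sigma(\theta)\lesssim \frac1K.
\]
The Lipschitz constant of \(\|\cdot\|_{\rm op}\) with respect to \(\|\cdot\|_2\) is \(b=\sqrt K\), since \(\|X\|_{\rm op}\le(\tr X^2)^{1/2}=\sqrt K\|X\|_2\).

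\emph{Applying Milman's theorem.} By Milman's form of Dvoretzky's theorem (concentration on the sphere together with a net argument), a random subspace of dimension
\[
 k\simeq (K^2-1)\,(M/b)^2\simeq K^2\cdot\frac{1/K^2}{K}
\]
is \(2\)-Euclidean with respect to the seminorm \(\|\cdot\|_{\rm op}\). That count gives only \(k\sim 1/K\), which is useless, so I recheck the scaling.

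With the normalized norm, \(\|X\|_{\rm op}\le\sqrt K\|X\|_2\) is correct. For a GUE matrix \(X\) with \(\|X\|_{\rm op}\approx 2\), the normalized norm is \(\|X\|_2\approx(K^{-1}\cdot K^2\cdot K^{-1})^{1/2}=1\), taking entries of variance \(1/K\). Hence \(M\approx 2\) in normalized units and the ratio \(b/M\approx\sqrt K/2\). The Dvoretzky dimension is then \(k\simeq n(M/b)^2\simeq K^2/K=K\), as required. On that subspace, \(\|X\|_{\rm op}\le 2M\|X\|_2\le D\|X\|_2\) with \(D\) universal. Finally, choose \(q\) to be the largest power of two with \(q\le k\), so that \(q\ge k/2\ge c_0K\). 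For small \(K\), where \(k\) may be below \(1\), take \(q=1\) and a single normalized traceless Hermitian matrix, for which \(D\le\sqrt K\) is bounded; this is absorbed by enlarging \(D\).

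\emph{Main obstacle.} The main obstacle is the normalization bookkeeping in the Gaussian estimate \(\mathbb E\|G\|_{\rm op}\lesssim\mathbb E\|G\|_2\). If one prefers an elementary self-contained route, the argument can be replaced by a direct \(\varepsilon\)-net union bound: take i.i.d. normalized GUE-type matrices, orthonormalize them (Gram matrix close to the identity when \(q\ll K^2\)), and bound \(\sup_{|t|=1}\|\sum t_aA_a\|_{\rm op}\) using a \(1/2\)-net of size \(5^q\) in \(t\) and the Gaussian tail \(\mathbb P(\|X\|_{\rm op}>C)\le e^{-cK}\) for a single normalized GUE \(X\). The union bound \(5^q e^{-cK}<1\) forces \(q\le c_0K\), which is exactly the claimed linear dimension.
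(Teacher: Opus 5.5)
Your proposal follows the paper's proof: Milman's mean-norm form of Dvoretzky's theorem on the space of traceless Hermitian matrices with the normalized Hilbert--Schmidt norm (dimension $K^2-1$), with mean $M\asymp1$ obtained from the Gaussian operator-norm bound and Lipschitz constant $b\le\sqrt K$, which gives a $D$-Euclidean subspace of dimension $\gtrsim K$; you then round down to a power of two. Your first Gaussian computation has the scaling inverted: it is $G/\sqrt K$, not $\sqrt K\,G$, that is a unit-variance GUE, so $\mathbb E\|G\|_{\rm op}\asymp K$, and your intermediate $M\lesssim 1/K$ is impossible since $\|X\|_{\rm op}\ge\|X\|_2$ forces $M\ge1$. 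Your recheck arrives at the correct value $M\approx2$, and from there the argument goes through.
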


\begin{proof}
Consider the real vector space
\[
 \mathfrak h_K^0
 =
 \{A\in M_K(\mathbb C):A=A^*,\ \tr A=0\},
\]
equipped with the operator norm. We use the normalized Hilbert--Schmidt
norm
\[
 \|A\|_2
 =
 \left(\frac1K\tr(A^2)\right)^{1/2}
\]
as its Euclidean norm. Its real dimension is
\(
 N=K^2-1.
\)
Let
\[
 S_2=\{A\in\mathfrak h_K^0:\|A\|_2=1\},
\]
let \(\sigma\) denote normalized surface measure on \(S_2\), and put
\[
 M_K=\int_{S_2}\|A\|_{\rm op}\,d\sigma(A),
 \qquad
 b_K=\sup_{A\in S_2}\|A\|_{\rm op}.
\]
Since the largest singular value dominates the root mean square of the
singular values,
\[
 1=\|A\|_2\le\|A\|_{\rm op}\le\sqrt K\,\|A\|_2=\sqrt K
 \qquad (A\in S_2).
\]
Thus
\[
 M_K\ge1,
 \qquad
 b_K\le\sqrt K.
\]
We also need a universal upper bound for \(M_K\). Let \(G\) be a standard
Gaussian vector in the Euclidean space
\((\mathfrak h_K^0,\|\cdot\|_2)\). Writing
\[
 G=R\Theta,
\]
where \(\Theta\) is uniformly distributed on \(S_2\), \(R=\|G\|_2\), and
\(R\) and \(\Theta\) are independent, gives
\[
 \mathbb E\|G\|_{\rm op}
 =
 (\mathbb ER)M_K.
\]
Since \(G\) lives in a Euclidean space of dimension \(N=K^2-1\),
\[
 \mathbb ER\asymp\sqrt N\asymp K.
\]
On the other hand, the standard Gaussian operator-norm estimate for
Hermitian random matrices gives
\[
 \mathbb E\|G\|_{\rm op}\lesssim K;
\]
see, for example, \cite[Chapter~4]{Vershynin2018}. Hence
\[
 M_K\le C
\]
with a universal constant \(C\).
We now apply the quantitative form of Dvoretzky's theorem in Milman's
mean-norm formulation; see \cite{Milman1971}. For a fixed numerical
distortion it gives a subspace \(E\subset\mathfrak h_K^0\) of dimension
\[
 \dim E
 \ge
 c\left(\frac{M_K}{b_K}\right)^2N
 \ge cK
\]
on which
\[
 \|A\|_{\rm op}\le D\|A\|_2
 \qquad (A\in E)
\]
with a universal constant \(D\).
Choose a normalized Hilbert--Schmidt orthonormal family of cardinality
\(q_0\ge cK\) in \(E\), and let \(q\) be the largest power of two with
\(q\le q_0\). Then \(q\ge q_0/2\), after adjusting the universal constant,
and the first \(q\) matrices satisfy~\eqref{eq:euclidean-core}.
\end{proof}

We next isolate the recursive amplification mechanism. Extend
\[
 I,A_1,\ldots,A_q
\]
to a normalized Hilbert--Schmidt orthonormal basis
\[
 \mathcal B=\{I,B_1,\ldots,B_{K^2-1}\}
\]
consisting of Hermitian matrices. This is possible because the Hermitian
matrices form a real Hilbert space of dimension \(K^2\); a real
orthonormal basis of this space is also a complex orthonormal basis of
\(M_K(\mathbb C)\).

\begin{lemma}[Recursive flat family]
\label{lem:recursive-flat-family}
Assume that \(A_1,\ldots,A_q\) satisfy~\eqref{eq:euclidean-core} with
\(q\) a power of two.
For every integer \(r\ge1\) there are an integer \(n_r\) and Hermitian
operators
\[
 P_1^{(r)},\ldots,P_{L_r}^{(r)}
 \in\mathcal H_{r,K}(n_r),
\]
where
\[
L_r:=q^r,\qquad
N_r:=q^{r(r-1)/2},
\qquad
n_r:=\frac{q^r-1}{q-1},
\]
with the following properties:
\begin{enumerate}
\item[(a)]
the supports of the coefficient vectors
\(\widehat{P_j^{(r)}}_{\mathcal B}\) are pairwise disjoint;

\item[(b)]
each \(P_j^{(r)}\) has exactly \(N_r\) nonzero
\(\mathcal B^{\otimes n_r}\)-coefficients, all of modulus
\(N_r^{-1/2}\);

\item[(c)]
for every real family \(t=(t_j)_{j=1}^{L_r}\),
\begin{equation}\label{eq:recursive-Euclidean-estimate}
 \Bigg\|
 \sum_{j=1}^{L_r}t_jP_j^{(r)}
 \Bigg\|_{\rm op}
 \le
 D^r
 \Bigg(\sum_{j=1}^{L_r}|t_j|^2\Bigg)^{1/2}.
\end{equation}
\end{enumerate}
\end{lemma}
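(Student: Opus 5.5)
The proof will go by induction on \(r\), following the tree-shaped site count \(n_r=1+q\,n_{r-1}\). The \(n_r\) sites are arranged as one root site \(0\) together with \(q\) disjoint copies of the \(n_{r-1}\)-site system from stage \(r-1\). I will write \(P_k^{(r-1),c}\) for the operator \(P_k^{(r-1)}\) placed on copy \(c\in[q]\), with the identity acting elsewhere.

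\textbf{Base case \(r=1\).} Here \(n_1=1\), \(L_1=q\) and \(N_1=1\). I take \(P_a^{(1)}=A_a\). Property (a) holds because \(A_1,\ldots,A_q\) are distinct elements of \(\mathcal B\). Property (b) holds since each \(A_a\) has the single coefficient \(1\). Property (c) is exactly \eqref{eq:euclidean-core}.

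\textbf{Induction step.} The new operators are obtained by tensoring a root direction \(A_a^{(0)}\) with a normalized Hadamard rotation of stage-\((r-1)\) operators. For the rotation I use a Sylvester Hadamard matrix \(H\) with \(\pm1\) entries, which exists because \(q\), and hence \(L_{r-1}\), is a power of two; the normalization is \(L_{r-1}^{-1/2}H\). The model operator is
\[
 P^{(r)}_{(a,j)}
 =
 A_a^{(0)}\otimes L_{r-1}^{-1/2}\sum_{k=1}^{L_{r-1}}H_{jk}\,P_k^{(r-1),c},
\]
where \(c\) is a copy index still to be fixed. I expect the following checks to go through as stated.
\begin{itemize}
\item Exact support is \(1+(r-1)=r\), because the root and copy \(c\) are disjoint sets of sites.
\item Hermiticity is preserved.
\item The coefficient count is \(L_{r-1}N_{r-1}=q^{r-1}q^{(r-1)(r-2)/2}=q^{r(r-1)/2}=N_r\). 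This uses property (a) at stage \(r-1\), so that the \(P_k\) have disjoint coefficient supports and no cancellation occurs.
\item Every coefficient has modulus \(L_{r-1}^{-1/2}N_{r-1}^{-1/2}=N_r^{-1/2}\), which is property (b).
\end{itemize}

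\textbf{Operator-norm bound (c).} The plan is a two-level Euclidean estimate. For a real family \(t\), group the sum as
\[
 \sum_{a,j}t_{a,j}P^{(r)}_{(a,j)}=\sum_a A_a^{(0)}\otimes Y_a,
 \qquad
 Y_a=\sum_k s_{a,k}P_k^{(r-1),c},
\]
where \(s_{a,\cdot}=L_{r-1}^{-1/2}H^{T}t_{a,\cdot}\). Since \(L_{r-1}^{-1/2}H\) is orthogonal, \(\|s_{a,\cdot}\|_2=\|t_{a,\cdot}\|_2\). By induction,
\[
 \|Y_a\|_{\rm op}\le D^{r-1}\|t_{a,\cdot}\|_2 .
\]
Each \(Y_a\) is a real combination of Hermitian operators, hence Hermitian. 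For the root level I then need
\[
 \Bigl\|\sum_a A_a\otimes Y_a\Bigr\|_{\rm op}\le D\Bigl(\sum_a\|Y_a\|_{\rm op}^2\Bigr)^{1/2}.
\]
I would prove this by testing against unit vectors and using \eqref{eq:euclidean-core} for real coefficient vectors. In the commuting case this follows by simultaneous diagonalization of the \(Y_a\). If the \(Y_a\) do not commute, I would instead arrange the construction so that distinct \(a\) sit on distinct copies, \(c=a\). The \(Y_a\) then act on disjoint sites and commute. Iterating the two levels gives the factor \(D\cdot D^{r-1}=D^r\).

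\textbf{Main obstacle.} The hardest part is the disjointness property (a), together with the choice of \(c\). Hadamard rows all have full support, so for fixed \(a\) and a fixed copy, different \(j\) would share the same coefficient support. The fix I would try first is to spread the index \(j\) over the \(q\) copies: write \(j=(c,m)\), and let the Hadamard mixing act only within a block of size \(L_{r-1}\) of stage-\((r-1)\) operators living on copy \(c\). In this way the pair (root direction \(a\), copy \(c\)) labels the support, and the remaining index \(m\) is separated by disjointness at stage \(r-1\). Reconciling this bookkeeping with the exact values \(L_r=q^r\) and \(N_r=q^{r(r-1)/2}\), while keeping the copies disjoint so that the norm estimate above still applies, is the step I expect to require the most care.
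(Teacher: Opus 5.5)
Your proposal has a genuine gap, and it is exactly the step you flag as the main obstacle. With a shared root site, the mechanism you describe cannot deliver (a), (b) and (c) at the same time.

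\textbf{Why the shared root fails.} Your root-level estimate $\|\sum_a A_a\otimes Y_a\|_{\op}\le D(\sum_a\|Y_a\|_{\op}^2)^{1/2}$ is false for non-commuting Hermitian $Y_a$. Take $K=2$, $A_1=X$, $A_2=Z$ (so $D=1$), and $Y_1=X$, $Y_2=Z$ on a second site. Then $\|X\otimes X+Z\otimes Z\|_{\op}=2>\sqrt2$. So you genuinely need the $Y_a$ to commute, which forces $c=a$.

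But with $c=a$, the $L_{r-1}$ operators carrying root label $a$ are all Hadamard rows of the same $L_{r-1}$ stage-$(r-1)$ operators on copy $a$. Hence they have identical coefficient supports, and (a) fails.

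The obstruction is structural within your framework. To reach $N_r=L_{r-1}N_{r-1}$ coefficients, each operator must combine $L_{r-1}$ stage-$(r-1)$ operators with disjoint supports. A single copy contains only $L_{r-1}$ such operators, so at most one operator per root label survives. That gives $q$ operators instead of $q^r$.

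\textbf{Why the fix $j=(c,m)$ does not help.} If the mixing is over the full block of size $L_{r-1}$ on copy $c$, then $m$ ranges over $q^{r-2}$ Hadamard rows on that same copy. For fixed $(a,c)$ these still share their support once $r\ge3$. If you shrink the blocks to restore disjointness, each operator has fewer than $N_r$ coefficients. In either case, letting $c$ vary independently of $a$ puts $Y_a$ and $Y_{a'}$ on the same copies, which destroys the commutation you need for (c).

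\textbf{The missing idea: reverse the roles of old and new sites.} Keep the whole stage-$r$ system as the trunk. Hadamard-mix \emph{all} $L_r$ stage-$r$ operators into
\[
R_i=L_r^{-1/2}\sum_j h_{ij}P_j^{(r)} .
\]
Then attach one fresh site per Hadamard row:
\[
P_{i,a}^{(r+1)}=R_i\otimes A_a^{[i]},
\]
where $A_a$ acts on the $i$-th of $L_r$ fresh sites. This gives $n_{r+1}=n_r+q^r$, which coincides numerically with your tree count.

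Disjointness is now automatic. Different $i$ activate different fresh sites, and equal $i$ with different $a$ differ in the basis element on that site.

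For (c), write $\sum_{i,a}t_{i,a}P_{i,a}^{(r+1)}=\sum_i R_i\otimes X_i$ with $X_i=\sum_a t_{i,a}A_a^{[i]}$. Now it is the $X_i$, living on distinct fresh sites, that commute. On a joint eigenspace one is left with $\sum_i\lambda_i R_i$. This is bounded by the induction hypothesis after the orthogonal Hadamard rotation, and $|\lambda_i|\le D\|t_{i,\cdot}\|_2$ by \eqref{eq:euclidean-core}. The noncommutativity is thus absorbed by the induction hypothesis rather than by a shared Euclidean-core site.
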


\begin{proof}
We proceed by induction on \(r\).
For \(r=1\), take \(n_1=1\) and
\[
 P_a^{(1)}=A_a,
 \qquad 1\le a\le q.
\]
Then \(L_1=q\), \(N_1=1\), the coefficient supports are disjoint, and
all assertions follow from Lemma~\ref{lem:euclidean-core}.
Suppose now that the family has been constructed at level \(r\). Write
\[
 L=L_r,
 \qquad
 P_j=P_j^{(r)}
 \quad (1\le j\le L).
\]
Since
\(
 L=q^r
\)
is a power of two, there exists a Sylvester Hadamard matrix
\[
 H=(h_{ij})_{i,j=1}^L,
 \qquad
 h_{ij}\in\{-1,1\},
 \qquad
 HH^{\mathsf T}=LI.
\]
Define
\begin{equation}\label{eq:Hadamard-mixtures}
 R_i
 =
 \frac1{\sqrt L}\sum_{j=1}^L h_{ij}P_j,
 \qquad
 1\le i\le L.
\end{equation}
The Hadamard mixing preserves the Euclidean operator-norm estimate.
Indeed, for \(s=(s_i)_{i=1}^L\in\mathbb R^L\),
\[
 \sum_{i=1}^Ls_iR_i
 =
 \sum_{j=1}^L u_jP_j,
 \qquad
 u_j
 =
 \frac1{\sqrt L}\sum_{i=1}^Ls_ih_{ij}.
\]
Since \(H/\sqrt L\) is orthogonal,
\[
 \sum_{j=1}^L|u_j|^2
 =
 \sum_{i=1}^L|s_i|^2.
\]
Thus, by the induction hypothesis,
\begin{equation}\label{eq:Hadamard-Euclidean-estimate}
 \left\|\sum_{i=1}^Ls_iR_i\right\|_{\rm op}
 \le
 D^r
 \left(\sum_{i=1}^L|s_i|^2\right)^{1/2}.
\end{equation}
We now append \(L\) fresh tensor sites, labelled \(1,\ldots,L\).
Since \(L=L_r=q^r\), we have
\[
 n_{r+1}=n_r+L.
\]
For \(1\le i\le L\) and \(1\le a\le q\), let \(A_a^{[i]}\) denote the
operator on the fresh block which acts as \(A_a\) on the \(i\)-th fresh
site and as the identity on all other fresh sites. Define
\[
 P_{i,a}^{(r+1)}
 =
 R_i\otimes A_a^{[i]}.
\]
There are
\(
 qL=q^{r+1}
\)
such operators, so \(L_{r+1}=q^{r+1}\).
We first verify the operator-norm estimate. Let
\(t_{i,a}\in\mathbb R\), and define on the fresh block
\[
 X_i
 =
 \sum_{a=1}^q t_{i,a}A_a^{[i]}.
\]
Then
\begin{equation}\label{eq:recursive-tensor-sum}
 \sum_{i=1}^L\sum_{a=1}^q
 t_{i,a}P_{i,a}^{(r+1)}
 =
 \sum_{i=1}^L R_i\otimes X_i.
\end{equation}
The operators \(X_1,\ldots,X_L\) are Hermitian and act on distinct
tensor factors. Hence they commute and admit a simultaneous spectral
decomposition. On a common spectral subspace write
\[
 X_i=\lambda_iI,
 \qquad
 \lambda_i\in\mathbb R.
\]
Restricting~\eqref{eq:recursive-tensor-sum} to that subspace leaves the
operator
\[
 \sum_{i=1}^L\lambda_iR_i
\]
on the old tensor factors. By~\eqref{eq:Hadamard-Euclidean-estimate},
\[
 \left\|\sum_{i=1}^L\lambda_iR_i\right\|_{\rm op}
 \le
 D^r
 \left(\sum_{i=1}^L|\lambda_i|^2\right)^{1/2}.
\]
Moreover, Lemma~\ref{lem:euclidean-core} gives
\[
 |\lambda_i|
 \le
 \|X_i\|_{\rm op}
 \le
 D
 \left(\sum_{a=1}^q|t_{i,a}|^2\right)^{1/2}.
\]
Taking the supremum over all common spectral subspaces therefore yields
\[
 \left\|
 \sum_{i=1}^L\sum_{a=1}^q
 t_{i,a}P_{i,a}^{(r+1)}
 \right\|_{\rm op}
 \le
 D^{r+1}
 \left(
 \sum_{i=1}^L\sum_{a=1}^q|t_{i,a}|^2
 \right)^{1/2}.
\]
This is~\eqref{eq:recursive-Euclidean-estimate} at level \(r+1\).

It remains to check the support and coefficient assertions. By induction,
the coefficient supports of \(P_1,\ldots,P_L\) are pairwise disjoint,
and each \(P_j\) has \(N_r\) coefficients of modulus \(N_r^{-1/2}\).
Consequently, no cancellation is possible in~\eqref{eq:Hadamard-mixtures}:
every \(R_i\) has exactly
\[
 LN_r
\]
nonzero coefficients, all of modulus
\[
 \frac1{\sqrt L}\frac1{\sqrt{N_r}}
 =
 (LN_r)^{-1/2}.
\]
Multiplication by \(A_a^{[i]}\) adds precisely one nonidentity factor on
a fresh site. Hence every \(P_{i,a}^{(r+1)}\) has exact support \(r+1\).

Finally, the coefficient supports of two different
\(P_{i,a}^{(r+1)}\) are disjoint. If the indices \(i\) are different,
their new active sites are different; if the indices \(i\) agree but
the indices \(a\) are different, the local basis element on that fresh
site is different. Thus
\[
 N_{r+1}=L_rN_r.
\]
Starting from \(L_1=q\) and \(N_1=1\), the recursions
\[
 L_{r+1}=qL_r,
 \qquad
 N_{r+1}=L_rN_r
\]
give
\[
 L_r=q^r,
 \qquad
 N_r=q^{1+2+\cdots+(r-1)}
     =q^{r(r-1)/2}.
\]
This proves the lemma.
\end{proof}

\smallskip

\begin{proof}[Proof of Theorem~\ref{thm:beta-lower}]
Fix \(K\), and choose \(q\), \(D\), and the local basis \(\mathcal B\)
as above. For \(d\ge1\), apply
Lemma~\ref{lem:recursive-flat-family} at level \(d\), and define
\[
 V_d
 =
 L_d^{-1/2}
 \sum_{j=1}^{L_d}P_j^{(d)}
 \in\mathcal H_{d,K}(n_d).
\]
Using~\eqref{eq:recursive-Euclidean-estimate} with the constant coefficient
vector \(t_j=L_d^{-1/2}\), we obtain
\begin{equation}\label{eq:Vd-operator-bound}
 \|V_d\|_{\rm op}\le D^d.
\end{equation}
On the other hand, the coefficient supports of the
\(P_j^{(d)}\)'s are pairwise disjoint. Hence \(V_d\) has exactly
\begin{equation}\label{eq:Md-number-coefficients}
 M_d
 =
 L_dN_d
 =
 q^d q^{d(d-1)/2}
 =
 q^{d(d+1)/2}
\end{equation}
nonzero coefficients. Every one of them has modulus
\[
 L_d^{-1/2}N_d^{-1/2}
 =
 M_d^{-1/2}.
\]
Therefore, with \(p_d=2d/(d+1)\),
\[
 \|\widehat V_{d,\mathcal B}\|_{\ell_{p_d}}
 =
 \left(
 M_d(M_d^{-1/2})^{p_d}
 \right)^{1/p_d}
 =
 M_d^{1/p_d-1/2}.
\]
Since
\[
 \frac1{p_d}-\frac12=\frac1{2d},
\]
equation~\eqref{eq:Md-number-coefficients} gives
\begin{equation}\label{eq:Vd-lpd-final}
 \|\widehat V_{d,\mathcal B}\|_{\ell_{p_d}}
 =
 M_d^{1/(2d)}
 =
 q^{(d+1)/4}.
\end{equation}
The constant \(C_{d,K}\) is required to work uniformly over all
normalized Hilbert--Schmidt orthonormal local bases. In particular, it
must dominate the ratio corresponding to the specific basis
\(\mathcal B\) constructed above. Combining
\eqref{eq:Vd-operator-bound} and \eqref{eq:Vd-lpd-final}, we obtain
\[
 C_{d,K}
 \ge
 D^{-d}q^{(d+1)/4}.
\]
Taking \(d\)-th roots yields
\[
 C_{d,K}^{1/d}
 \ge
 D^{-1}q^{(d+1)/(4d)}.
\]
Hence
\[
 \beta(K)
 =
 \limsup_{d\to\infty}C_{d,K}^{1/d}
 \ge
 D^{-1}q^{1/4}.
\]
By Lemma~\ref{lem:euclidean-core}, \(q\ge c_0K\), after adjusting the
universal constant. Therefore
\[
 \beta(K)\ge cK^{1/4},
\]
as claimed.
\end{proof}

\begin{remark}[Where the exponent \(1/4\) comes from]
\label{rem:quarter-exponent}
The numerical mechanism is particularly transparent. At exact support
\(d\), the recursion produces
\[
 M_d=q^{d(d+1)/2}
\]
flat coefficients. A flat \(\ell_2\)-normalized vector with \(M_d\)
coordinates has, at the Bohnenblust--Hille exponent,
\[
 \ell_{p_d}\text{-norm}
 =
 M_d^{1/(2d)}.
\]
The definition of \(\beta(K)\) requires one further \(d\)-th root.
Thus the exponent of \(q\) is
\[
 \frac{d(d+1)}2\cdot\frac1{2d}\cdot\frac1d
 =
 \frac{d+1}{4d}
 \longrightarrow
 \frac14.
\]
Since the Euclidean core has \(q\simeq K\) directions, this yields the
power \(K^{1/4}\).
\end{remark}

\begin{remark}
\label{rem:slote-recursion}
The amplification is in the spirit of the flat-spectrum construction of
Slote~\cite{Slote2026}, but the organization is different. Instead of
attaching a separate suffix to a fixed core, the Hadamard mixing is
iterated one support level at a time. Each step preserves the Euclidean
operator-norm estimate up to one additional factor \(D\), while adding
only one active tensor site to every coefficient.
For \(K=2\), one may take \(A_1=X\) and \(A_2=Z\), the usual Pauli
matrices. Then
\[
 \|t_1X+t_2Z\|_{\rm op}
 =
 (|t_1|^2+|t_2|^2)^{1/2}
 \qquad (t_1,t_2\in\mathbb R),
\]
so that \(q=2\) and \(D=1\). The same recursion therefore gives the
explicit bound
\[
 \beta(2)\ge2^{1/4}.
\]
\end{remark} 

The preceding argument leaves a gap between the lower bound and the upper bound in the dependence of the
exponential base on the local dimension. It is therefore natural to ask
for the correct asymptotic behavior of $\beta(K)$ as $K\to\infty$.
In other words, can the gap
$$
cK^{1/4}\le \beta(K)\le C\sqrt K
$$
be closed?


\section{GM and HW scalarization procedures of SVZ revisited}
\label{sec:HW-support-sensitive}

The argument of Section~\ref{sec:BH-theory} is intrinsic: it is formulated directly on the
support spaces \(\mathcal H_{\le d,K}(n)\) and does not rely on a
distinguished local operator basis.  This is in contrast to the
scalarization methods of Slote--Volberg--Zhang~\cite{SVZqudit,
VZnoncomm,
SVZtightness}, which start from the
Gell--Mann or Heisenberg--Weyl coordinates and reduce the operator problem
to a commutative one.

In this section we relate the two viewpoints.  We first show that, at the
Bohnenblust--Hille exponent, changing the local orthonormal operator basis
costs only a factor independent of the interaction order.  We then revisit
the GM and HW scalarizations of Slote--Volberg--Zhang to identify precisely
what these basis-dependent reductions imply for the intrinsic support
spaces, and how they compare with the direct basis-free estimate obtained
above.

\subsection{Change of tensor-product operator coordinates}
\label{subsec:change-of-coordinates}

The Bohnenblust--Hille exponent is stable under an arbitrary change of
single-site orthonormal operator coordinates.  The important point is that,
although the change of coordinates is tensorized over the active sites, at the
exponent
\(
p_d=2d/(d+1)\)
the resulting loss is bounded independently of the number of sites.

\begin{theorem}[Change of coordinates]
\label{thm:change-of-coordinates}
Let
\[
\mathcal B=\{I,B_1,\ldots,B_{K^2-1}\},
\qquad
\mathcal C=\{I,C_1,\ldots,C_{K^2-1}\}
\]
be two orthonormal bases of $M_K(\mathbb C)$ for the normalized
Hilbert--Schmidt inner product, with
\(
B_\mu,C_\mu\in M_K^0
\)
for
\(
1\le \mu\le K^2-1.
\)
Let $\mathcal B^{\otimes n}$ and $\mathcal C^{\otimes n}$ be the corresponding
tensor-product operator bases.  Then, for every
\(
A\in\mathcal H_{\le d,K}(n),
\)
one has
\begin{equation}\label{eq:change-of-coordinates}
 (K^2-1)^{-1/2}
 \|\widehat A_{\mathcal C}\|_{\ell_{p_d}}
 \le
 \|\widehat A_{\mathcal B}\|_{\ell_{p_d}}
 \le
 (K^2-1)^{1/2}
 \|\widehat A_{\mathcal C}\|_{\ell_{p_d}},
 \qquad
 p_d=\frac{2d}{d+1}.
\end{equation}
In particular, the constants are independent of $n$, $d$, and of the two
orthonormal bases.
\end{theorem}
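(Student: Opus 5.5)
The plan is to reduce everything to a single-site unitary change of coordinates and then tensorize it support by support. Put $M=K^2-1$. Since both $\mathcal B$ and $\mathcal C$ contain $I$ and are orthonormal for $\langle\cdot,\cdot\rangle_{2,K}$, the traceless parts $\{B_\mu\}$ and $\{C_\nu\}$ are two orthonormal bases of $M_K^0$. Hence there is a unitary $M\times M$ matrix $U=(u_{\nu\mu})$ with $B_\mu=\sum_\nu u_{\nu\mu}C_\nu$. On a fixed support $S\subset[n]$ with $|S|=r\le d$, the tensor $B_\beta$ with $\supp\beta=S$ expands in the $C_\gamma$ with $\supp\gamma=S$ through the matrix $U^{\otimes r}$, and no other supports occur. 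Consequently $\widehat A_{\mathcal C}$ restricted to support $S$ equals $U^{\otimes r}$ applied to $\widehat A_{\mathcal B}$ restricted to $S$. The supports $S$ give disjoint coordinate blocks, so
\[
\|\widehat A_{\mathcal C}\|_{\ell_{p_d}}^{p_d}=\sum_{|S|\le d}\|U^{\otimes|S|}(\widehat A_{\mathcal B}|_S)\|_{\ell_{p_d}}^{p_d}.
\]
It therefore suffices to bound $\|U^{\otimes r}\|_{\ell_{p}\to\ell_{p}}$ for $r\le d$ and $p=p_d$.

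The first step is the one-site bound $\|U\|_{\ell_p^M\to\ell_p^M}\le M^{1/p-1/2}$ for $1\le p\le2$. At $p=2$ the norm is $1$ because $U$ is unitary. At $p=1$ the norm is the maximal column $\ell_1$-norm, which is at most $\sqrt M$ by Cauchy--Schwarz, since each column is an $\ell_2$-unit vector. Riesz--Thorin interpolation with $1/p=(1-\theta)/2+\theta$, i.e.\ $\theta=2/p-1$, then gives $M^{\theta/2}=M^{1/p-1/2}$.

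The second step tensorizes this. Write $U^{\otimes r}=\prod_k(I\otimes\cdots\otimes U\otimes\cdots\otimes I)$. Each factor acts on $\ell_p$ of a product index set as a direct sum of copies of $U$ along one coordinate, so its $\ell_p\to\ell_p$ norm equals $\|U\|_{p\to p}$. Hence $\|U^{\otimes r}\|_{p\to p}\le M^{r(1/p-1/2)}$. At $p=p_d$ we have $1/p_d-1/2=1/(2d)$, so for $r\le d$ this is $M^{r/(2d)}\le M^{1/2}$. Substituting into the blockwise identity yields $\|\widehat A_{\mathcal C}\|_{\ell_{p_d}}\le\sqrt M\,\|\widehat A_{\mathcal B}\|_{\ell_{p_d}}$. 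Exchanging the roles of $\mathcal B$ and $\mathcal C$ (using $U^*$) gives the other inequality.

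The only delicate point is the interplay between the exponent and the support level. The loss $M^{r(1/p-1/2)}$ grows with the number $r$ of active sites, and it is exactly the restriction $r\le d$ together with the choice $p=p_d$ that keeps this loss below $M^{1/2}$, uniformly in $n$ and $d$. I would also check explicitly that the tensorized $\ell_p$-operator norm is submultiplicative in the way used above, which is the standard direct-sum argument.
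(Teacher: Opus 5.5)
Your proof is correct and follows the paper's argument: the same unitary $U$ relating the traceless parts of the two bases, the same blockwise identity $\widehat A_{\mathcal C,S}=U^{\otimes r}\widehat A_{\mathcal B,S}$ on each support $S$ with $|S|=r$, and the same final bound $M^{r/(2d)}\le M^{1/2}$. The only difference is that the paper bounds $\|U^{\otimes r}\|_{\ell_{p_d}\to\ell_{p_d}}$ in one line, using $\|y\|_{p_d}\le (M^r)^{1/p_d-1/2}\|y\|_2$, the unitarity of $U^{\otimes r}$ itself, and $\|x\|_2\le\|x\|_{p_d}$; your Riesz--Thorin step and factor-by-factor tensorization are valid but not needed.
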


\begin{proof}
Put
\(
m=K^2-1.
\)
Since $\{B_1,\ldots,B_m\}$ and $\{C_1,\ldots,C_m\}$ are orthonormal bases of
the same Hilbert space $M_K^0$, there is a unitary matrix
\[
U=(u_{\mu\nu})_{\mu,\nu=1}^m\in U(m)
\]
such that
\(
C_\nu=\sum_{\mu=1}^m u_{\mu\nu}B_\mu.
\) Fix a support
\(
S=\{j_1,\ldots,j_r\}\subset[n],
\qquad r\le d,
\)
and let $A_S\in\mathcal H_S$ be the corresponding support component of $A$.
In $\mathcal C$-coordinates,
\[
A_S
=
\sum_{\nu_1,\ldots,\nu_r=1}^m
\widehat A_{\mathcal C,S}(\nu_1,\ldots,\nu_r)
C_{\nu_1}^{(j_1)}\otimes\cdots\otimes C_{\nu_r}^{(j_r)}.
\]
Substituting
\(
C_{\nu_\ell}
=
\sum_{\mu_\ell=1}^m
u_{\mu_\ell\nu_\ell}B_{\mu_\ell}
\)
at every active site gives
\[
A_S
=
\sum_{\mu_1,\ldots,\mu_r=1}^m
\left(
\sum_{\nu_1,\ldots,\nu_r=1}^m
u_{\mu_1\nu_1}\cdots u_{\mu_r\nu_r}
\widehat A_{\mathcal C,S}(\nu_1,\ldots,\nu_r)
\right)
B_{\mu_1}^{(j_1)}\otimes\cdots\otimes B_{\mu_r}^{(j_r)}.
\]
Therefore, in tensor notation,
\begin{equation*}
\widehat A_{\mathcal B,S}
=
U^{\otimes r}\widehat A_{\mathcal C,S}.
\end{equation*}
The operator $U^{\otimes r}$ is unitary on $\ell_2^{m^r}$.  Since
$1\le p_d\le2$, for every $x\in\mathbb C^{m^r}$,
\[
\|U^{\otimes r}x\|_{p_d}
\le
m^{r(1/p_d-1/2)}\|U^{\otimes r}x\|_2
=
m^{r(1/p_d-1/2)}\|x\|_2
\le
m^{r(1/p_d-1/2)}\|x\|_{p_d}.
\]
Now
\(
\frac1{p_d}-\frac12
=
\frac1{2d},
\)
and hence, since $r\le d$,
\[
m^{r(1/p_d-1/2)}
=
m^{r/(2d)}
\le
m^{1/2}.
\]
Thus, on every support $S$,
\[
\|\widehat A_{\mathcal B,S}\|_{p_d}
\le
m^{1/2}
\|\widehat A_{\mathcal C,S}\|_{p_d}.
\]
The coefficient sets corresponding to distinct supports are disjoint.
Therefore, summing the $p_d$-powers over all supports with $|S|\le d$ gives
\[
\|\widehat A_{\mathcal B}\|_{p_d}
\le
m^{1/2}\|\widehat A_{\mathcal C}\|_{p_d}.
\]
Interchanging $\mathcal B$ and $\mathcal C$ gives the reverse inequality.
Since $m=K^2-1$, this proves \eqref{eq:change-of-coordinates}.
\end{proof}

\subsection{The GM route}

The change-of-coordinates estimate from
Theorem~\ref{thm:change-of-coordinates} allows one to combine the
Gell--Mann scalarization of Slote--Volberg--Zhang with the Boolean
Bohnenblust--Hille inequality to obtain a bound in an arbitrary tensor-product
operator basis.  We spell this out because it also makes the dependence on the
local dimension transparent.

In \cite[Section~2]{SVZqudit}, Slote--Volberg--Zhang construct, for every
$x\in\{-1,1\}^{K^2-1}$, a density matrix $\rho(x)$ with the following
property.  Write
\(
\mathrm{GM}(K)^\times:=\mathrm{GM}(K)\setminus\{I\},
\)
so that \(|\mathrm{GM}(K)^\times|=K^2-1\), and index the Boolean cube as
\[
x=(x_G)_{G\in\mathrm{GM}(K)^\times}
   \in\{-1,1\}^{\mathrm{GM}(K)^\times}.
\]
By \cite[Lemma~10]{SVZqudit}, for every such $x$
there exists a density matrix \(\rho(x)\) such that
\[
 \tr(G\rho(x))=g_K^{-1}x_G
 \qquad
 (G\in\mathrm{GM}(K)^\times),
\]
where
\[
 g_K:=3\binom K2\sqrt{\frac2K}
      =\frac{3}{\sqrt2}(K-1)\sqrt K.
\]
Consequently, for
$A\in\mathcal H_{\le d,K}(n)$ written in GM coordinates, the product-state
scalarization
\[
 f_A(x^{(1)},\ldots,x^{(n)})
 :=\tr\!\left[A\,\rho(x^{(1)})\otimes\cdots\otimes\rho(x^{(n)})\right]
\]
is a Boolean polynomial of degree at most $d$ and satisfies
$\|f_A\|_\infty\le\|A\|_{\rm op}$.  Moreover, a GM coefficient supported on
$r$ sites becomes a distinct Boolean Fourier coefficient multiplied by
$g_K^{-r}$.  Hence, with $p_d=2d/(d+1)$,
\begin{equation}\label{eq:GM-SVZ-sharp-scalarization}
 \|\widehat A_{\rm GM}\|_{\ell_{p_d}}
 \le
 g_K^d\,\BH^{\le d}_{\{-1,1\}}\,\|A\|_{\rm op}.
\end{equation}
Here we have simply retained the coefficient appearing in the scalarization
itself.  The statement of \cite[Theorem~4]{SVZqudit} uses the slightly coarser
factor $\bigl(\frac32(K^2-K)\bigr)^d$.

Now let $\mathcal B^{\otimes n}$ be any tensor-product operator basis as in
Theorem~\ref{thm:change-of-coordinates}.  Applying that theorem with the GM
basis and then \eqref{eq:GM-SVZ-sharp-scalarization} gives
\[
 \|\widehat A_{\mathcal B}\|_{\ell_{p_d}}
 \le
 \sqrt{K^2-1}\,g_K^d\,
 \BH^{\le d}_{\{-1,1\}}\,\|A\|_{\rm op},
 \qquad A\in\mathcal H_{\le d,K}(n).
\]
Since the Boolean BH constants are subexponential in $d$
(applying ~\cite{DMPBoolean}), the prefactor
$\sqrt{K^2-1}$ disappears after taking $d$th roots.  Thus this route gives
$\beta(K)\le g_K=\frac{3}{\sqrt2}(K-1)\sqrt K$, an exponential base of order
$K^{3/2}$, for every tensor-product orthonormal operator basis.  The scalarization is entirely due to
Slote--Volberg--Zhang; the only observation here is that the coordinate change
makes their reference-basis estimate basis-independent at a cost which is
irrelevant for the $d$th-root asymptotics.  This consequence does not seem to
be stated explicitly in \cite{SVZqudit}.

Of course, $\beta(K)\le g_K$ is not competitive with the direct
basis-free estimate from Theorem~\ref{thm:local-BH}.  We record it only to separate
what follows from the SVZ scalarization plus a change of coordinates from the
stronger intrinsic argument developed above.

Thus the prime/composite distinction encountered in the HW reduction is not
intrinsic to the SVZ scalarization method: the GM route already gives the
optimal interaction exponent \(p_d\) for every \(K\).  The limitation of this
route is instead the weaker dependence on the local dimension \(K\), yielding
an exponential-base bound of order \(K^{3/2}\).

\subsection{The HW route}

A similar observation applies to the Heisenberg--Weyl scalarization of
Slote--Volberg--Zhang~\cite{SVZqudit}.  Here the natural scalar model is a cyclic group rather
than the Boolean cube.  The structure is nevertheless parallel to the GM
route: one scalarizes the reference operator basis, applies a commutative BH
inequality at support degree $d$, and finally uses
Theorem~\ref{thm:change-of-coordinates} to pass to arbitrary tensor-product
operator coordinates.

The only point that needs some care is the choice of HW directions.  Call
$u=(a,b)\in\mathbb Z_K^2$ primitive if
$\gcd(a,b,K)=1$.  Its cyclic span $\langle u\rangle$ is then a free cyclic
submodule of order $K$, and two primitive vectors generate the same submodule
if and only if they differ by multiplication by a unit in
$\mathbb Z_K^\times$.  Choose one primitive generator from each such submodule
and denote the resulting set by $\Gamma_K$.  Then
$$s_K:=|\Gamma_K|=|\mathbb P^1(\mathbb Z_K)|=
K\prod_{p\mid K}(1+1/p)=:\psi(K).$$  For prime $K$ this is $K+1$, exactly the
number of directions used in the prime-dimensional reduction of
\cite{SVZqudit}.

We now use the scalarization mechanism from \cite[Section~3]{SVZqudit}, but
average only over the projective directions $\Gamma_K$.  The reason this is
enough is elementary.  Every nonzero $v\in\mathbb Z_K^2$ belongs to at least
one free cyclic submodule.  This can be checked prime-power by prime-power and
then assembled by the Chinese remainder theorem.  Moreover, if $u$ is
primitive, the HW commutation relation implies that averaging over an
eigenbasis associated with $u$ annihilates $W_v$ unless
$v\in\langle u\rangle$.  If $v=ku$, the surviving term is, up to an irrelevant
phase, the scalar character indexed by $k$.

Thus the same product-state construction as in \cite{SVZqudit} produces, for
every \(A\in\mathcal H_{\le d,K}(n)\), a scalar function \(f_A\) on the
\(n\)-fold product of \(C_K^{s_K}\), that is, on \(C_K^{s_Kn}\), such that
\begin{equation}\label{eq:HW-economical-scalarization}
 \|f_A\|_\infty\le\|A\|_{\rm op},
 \qquad
 \supp\widehat f_A\subset\{m:|\supp m|\le d\},
 \qquad
 \|\widehat A_{\rm HW}\|_{\ell_{p_d}}
 \le s_K^d\|\widehat f_A\|_{\ell_{p_d}}.
\end{equation}
Indeed, an HW tensor supported on $r$ sites gives scalar monomials supported on
those same $r$ sites, each with coefficient $s_K^{-r}$ times the original HW
coefficient.  Distinct HW frequencies do not collide because the scalar labels
record both the chosen projective direction and the frequency inside that
direction.  In composite dimension a nonzero frequency may lie in more than
one free cyclic submodule; this creates additional scalar coefficients and can
only improve the displayed $\ell_{p_d}$ comparison.

This support bookkeeping is the relevant distinction from the composite-$K$
formulation in \cite[Theorem~7]{SVZqudit}.  There the scalarization is measured
by ordinary scalar total degree, which can increase from $d$ to $(K-1)d$, and
the resulting exponent is
$2(K-1)d/((K-1)d+1)$.  If instead one keeps track only of the number of active
scalar coordinates, the support remains at most $d$ for every $K$.  In this
sense the scalarization argument of \cite{SVZqudit} itself works uniformly in
prime and composite dimensions; the stronger support-degree consequence does
not seem to have been recorded there.

To finish the argument, we use the support-sensitive commutative
Bohnenblust--Hille inequality on \(C_K^N\) proved in \cite{DGMMM-support}.
This already gives the optimal exponent \(p_d=2d/(d+1)\) for every \(K\).
Let \(\BH^{\rm supp}_{d,K}\) denote its optimal constant.  The recent result
of Pellegrino--Raposo \cite[Theorem~A]{PR} improves the growth of these
constants from exponential to subexponential; more precisely,
\begin{equation}\label{eq:PR-support-subexp}
 \BH^{\rm supp}_{d,K}
 \le \exp\!\left(
 \kappa_K\sqrt{d\log d}
 +O_K\!\left(\sqrt{\frac d{\log d}}\log\log d\right)\right),
\end{equation}
where $\kappa_2=2$ and
$\kappa_K=\sqrt{2K\log(K-1)/(K-2)}$ for $K\ge3$.
Combining \eqref{eq:HW-economical-scalarization} with this inequality gives
\[
 \|\widehat A_{\rm HW}\|_{\ell_{p_d}}
 \le
 s_K^d\BH^{\rm supp}_{d,K}\|A\|_{\rm op}.
\]
Finally, Theorem~\ref{thm:change-of-coordinates} yields, for every
normalized Hilbert--Schmidt orthonormal tensor-product operator basis
$\mathcal B^{\otimes n}$,
\[
 \|\widehat A_{\mathcal B}\|_{\ell_{p_d}}
 \le
 \sqrt{K^2-1}\,s_K^d\BH^{\rm supp}_{d,K}\|A\|_{\rm op},
 \qquad A\in\mathcal H_{\le d,K}(n).
\]
Since the commutative factor in \eqref{eq:PR-support-subexp} is
subexponential in $d$, the $d$th-root consequence is
$\beta(K)\le s_K=\psi(K)$.  The classical maximal-order formula
$\limsup_{K\to\infty}s_K/(K\log\log K)=6e^\gamma/\pi^2$ shows in particular
that $s_K\le C K\log\log(K+3)$.

This gives a convenient way to read the HW scalarization of
Slote--Volberg--Zhang together with the change of coordinates: their
reference-basis procedure, reorganized by projective HW directions and coupled
with the support-sensitive scalar BH inequality, gives the optimal BH exponent
$p_d$ for every local dimension, including composite $K$, and for every
tensor-product orthonormal operator basis.  The projective reorganization is
what improves the scalarization count from a quadratic-size family of
coprime pairs to $s_K=\psi(K)$, of maximal order $K\log\log K$.

As in the GM discussion, this route should be viewed as clarifying what can
be extracted from the SVZ scalarization rather than as replacing the direct
approach of Section~\ref{sec:BH-theory}.  In particular, the bound on
\(\beta(K)\) obtained in Theorem~\ref{thm:local-BH} remains substantially
stronger.



\section{Coefficient geometry and quantum applications}
\label{sec:applications}
The Bohnenblust--Hille estimates obtained above control the coefficient
geometry of local operators beyond the critical \(\ell_{p_d}\)-norm.
A natural next quantity is the coefficient one-norm
\(\|\widehat A_{\mathcal B}\|_1\), which measures the size of a coordinate
representation relative to the operator norm.  We first determine its
sharp scale, together with that of the unconditional basis constant, on
the exact-support spaces.

This coefficient geometry leads to two natural quantum applications.
We formulate them mainly in functional-analytic language, both to keep
the connection with the preceding sections transparent and because their
essential content is already operator-theoretic.  The first is a
nonlinear approximation statement: local operators admit sparse
tensor-product approximations, and hence sparse generalized-Pauli
approximations in Heisenberg--Weyl coordinates.  Such representations are
natural for compressed descriptions and learning problems, where one
seeks to retain only the significant coefficients.

The second is a unitary compression statement.  A Hamiltonian is
Hermitian but generally not unitary, whereas coherent quantum operations
are represented by unitaries.  In unitary tensor-product coordinates the
coefficient one-norm is precisely the LCU normalization, and the
PREPARE--SELECT construction realizes the normalized Hamiltonian as a
compression of a unitary operator.  Thus the same coefficient geometry
leads naturally from sparsification to block encodings and qubitization.

\subsection{Coefficient one-norm normalization and unconditionality}
\label{sec:coefficient-normalization}
Let
\(
\mathcal B=\{I,B_1,\ldots,B_{K^2-1}\}
\)
be a normalized Hilbert--Schmidt orthonormal local operator basis, and let
\(\mathcal B^{\otimes n}\) be the associated tensor-product basis.  For
\(\beta=(\beta_1,\ldots,\beta_n)\), write
\[
B_\beta=B_{\beta_1}\otimes\cdots\otimes B_{\beta_n},
\qquad
\mathcal B_{d,n}
=
\{B_\beta:|\supp\beta|=d\}.
\]
Thus
\[
M_{d,K,n}:=|\mathcal B_{d,n}|
=
\binom nd(K^2-1)^d.
\]
For \(A\in\mathcal H_{d,K}(n)\), let
\(\widehat A_{\mathcal B}\) denote its coefficient vector in
\(\mathcal B^{\otimes n}\).  We define the coefficient one-norm
normalization constant by
\[
\Lambda_{\mathcal B}(\mathcal H_{d,K}(n))
:=
\sup_{A\ne0}
\frac{\|\widehat A_{\mathcal B}\|_1}{\|A\|_{\rm op}}.
\]
Equivalently, this is the Sidon constant of the tensor system
\(\mathcal B_{d,n}\).
The unconditional basis constant of the same tensor system is
\[
\chi_{\mathcal B}(\mathcal H_{d,K}(n))
:=
\sup_{\substack{A\ne0\\ |\varepsilon_\beta|\le1}}
\frac{
\left\|
\sum_{|\supp\beta|=d}
\varepsilon_\beta
\widehat A_{\mathcal B}(\beta)B_\beta
\right\|_{\rm op}}
{\|A\|_{\rm op}}.
\]
Later, when \(\mathcal B=\mathcal U\) is unitary and \(H\) is an individual
Hamiltonian, we shall use the lower-case notation
\(\lambda_{\mathcal U}(H)=\|\widehat H_{\mathcal U}\|_1\) for its LCU
normalization.
The preceding Bohnenblust--Hille estimates determine the \(n\)-dependence
of both quantities sharply.

\begin{theorem}[Coefficient normalization and unconditionality scale]
\label{thm:l1-chi-scale}
Fix \(K\ge2\).  There is a constant \(c(K)\ge1\) such that, for every
\(1\le d\le n\), every normalized Hilbert--Schmidt orthonormal local
operator basis \(\mathcal B\), and each
\[
Q_{\mathcal B}
\in
\left\{
\Lambda_{\mathcal B}(\mathcal H_{d,K}(n)),
\chi_{\mathcal B}(\mathcal H_{d,K}(n))
\right\},
\]
one has
\begin{equation}\label{eq:l1-chi-scale}
c(K)^{-d}
\left(\frac nd\right)^{(d-1)/2}
\le
Q_{\mathcal B}
\le
c(K)^d
\left(\frac nd\right)^{(d-1)/2}.
\end{equation}
Equivalently,
\[
\Lambda_{\mathcal B}(\mathcal H_{d,K}(n))
\asymp_{c(K)^d}
\chi_{\mathcal B}(\mathcal H_{d,K}(n))
\asymp_{c(K)^d}
\left(\frac nd\right)^{(d-1)/2},
\]
uniformly in \(n,d\) and in the choice of \(\mathcal B\).
\end{theorem}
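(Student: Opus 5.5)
The plan is to prove the upper bound for \(\Lambda_{\mathcal B}\) via Hölder and the Bohnenblust--Hille inequality. For the lower bound I will use the commutative corner of Lemma~\ref{lem:spherical-corners} together with the Hamming-scheme estimates of \cite{DGMMM-support}. A comparison \(\chi\lesssim\Lambda\lesssim\chi\cdot(\text{random-sign average})\) will link the two quantities.

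\emph{Upper bound for \(\Lambda_{\mathcal B}\).} Let \(A\in\mathcal H_{d,K}(n)\). The coefficient vector lives on \(M_{d,K,n}=\binom nd(K^2-1)^d\) coordinates. Hölder with \(1/p_d-1/2\cdot\ldots\), more precisely \(1-1/p_d=(d-1)/(2d)\), gives
\[
\|\widehat A_{\mathcal B}\|_1\le M_{d,K,n}^{(d-1)/(2d)}\|\widehat A_{\mathcal B}\|_{p_d}.
\]
Theorem~\ref{thm:local-BH} then gives \(\|\widehat A_{\mathcal B}\|_{p_d}\le c_1(K)^d\|A\|_{\rm op}\). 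Using \(\binom nd\le(en/d)^d\), we get \(M_{d,K,n}^{(d-1)/(2d)}\le (eK^2)^{d/2}(n/d)^{(d-1)/2}\). For the unconditional constant, the triangle inequality and \(\|B_\beta\|_{\rm op}\le\sqrt K^{\,d}\) on active sites (the normalized HS norm is \(1\), so the operator norm is at most \(\sqrt K\) per site) give \(\chi_{\mathcal B}\le K^{d/2}\Lambda_{\mathcal B}\). This yields both upper bounds with \(c(K)\) polynomial in \(K\).

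\emph{Lower bound.} It suffices to bound \(\chi_{\mathcal B}\) from below. Indeed \(\Lambda_{\mathcal B}\ge\chi_{\mathcal B}\) up to a factor: for every sign choice, \(\|\sum\varepsilon_\beta\widehat A(\beta)B_\beta\|_{\rm op}\le\sum|\widehat A(\beta)|\,\|B_\beta\|_{\rm op}\le K^{d/2}\|\widehat A\|_1\), hence \(\chi\le K^{d/2}\Lambda\). It is therefore enough to show \(\chi_{\mathcal B}\ge c(K)^{-d}(n/d)^{(d-1)/2}\).

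For the diagonal basis \(\{D_m\}\) this follows from Lemma~\ref{lem:spherical-corners} and the commutative result of \cite{DGMMM-support}, which gives \(\chi(B_d(C_K^n))\gtrsim c^{-d}(n/d)^{(d-1)/2}\). Random signs on a Steinhaus/Rudin--Shapiro-type polynomial achieve operator norm \(\sim\sqrt{M}\) against a typical signed version of size \(\sim\) the \(\ell_2\) norm times a Hamming-scheme factor.

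The main obstacle is transferring this to an arbitrary basis \(\mathcal B\), where the diagonal corner is not spanned by \(\mathcal B\)-tensors. My first attempt is a basis-adapted corner. Pick a unitary \(V\) so that \(V^*\mathcal D_K^0V\) is, after a one-site change of coordinates as in Theorem~\ref{thm:change-of-coordinates}, compared with \(\operatorname{span}\{B_\mu\}\). Then bound the cost of passing between coordinates by a factor \(m^{1/2}\) per active site, absorbed into \(c(K)^d\). Concretely, the coefficient transform \(U^{\otimes d}\) on each support changes \(\ell_1\) norms and the effect of multipliers by at most \((K^2-1)^{d}\). Restricting sign multipliers to those constant on each support block keeps the multiplier bounded on the commutative side. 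If this coordinate comparison is too lossy for \(\chi\), the fallback is to prove the lower bound for \(\Lambda_{\mathcal B}\) directly: compute \(\|\widehat{J(f)}_{\mathcal B}\|_1\ge (K^2-1)^{-d/2}\|\widehat f\|_1\) support by support via \(\ell_1\)/\(\ell_2\) comparison on \(\ell_1^{m^d}\). Here the extremal Hamming-scheme functions \(f\) have \(\|\widehat f\|_1\gtrsim c^{-d}(n/d)^{(d-1)/2}\|f\|_\infty\). Then \(\chi\) follows by averaging random signs: the average satisfies \(\mathbb E\|\sum\varepsilon_\beta\widehat A(\beta)B_\beta\|_{\rm op}\ge\) a Khinchin-type lower bound in terms of \(\|\widehat A\|_1/\sqrt{\#\text{terms}}\)-scale quantities.
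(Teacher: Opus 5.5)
The upper bounds are exactly the paper's: H\"older plus Theorem~\ref{thm:local-BH}, then $\|B_\beta\|_{\rm op}\le K^{d/2}$. Your fallback lower bound for $\Lambda_{\mathcal B}$ is also correct, and simpler than what the paper does. For $A=J(f)$ one has $\|A\|_{\rm op}=\|f\|_\infty$. On each support block, the $\mathcal B$-coefficients and the coefficients in a completed diagonal basis differ by a unitary $U^{\otimes d}$ acting on $(K^2-1)^d$ coordinates. The $\ell_1$--$\ell_2$ comparison then gives $\Lambda_{\mathcal B}\ge (K^2-1)^{-d/2}\Lambda(B_d(C_K^n))$.

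The genuine gap is the lower bound for $\chi_{\mathcal B}$.

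\begin{itemize}
\item \emph{The random-sign deduction from $\Lambda_{\mathcal B}$ fails.} A Khinchin-type lower bound in terms of $\|\widehat A\|_1/\sqrt N$ is useless, because $\|\widehat A\|_1/\sqrt N\le\|\widehat A\|_2=\|A\|_2\le\|A\|_{\rm op}$; it only gives $\chi_{\mathcal B}\gtrsim1$. Worse, the matrix Khintchine inequality gives $\mathbb E_\varepsilon\|\sum_\beta\varepsilon_\beta\widehat A(\beta)B_\beta\|_{\rm op}\lesssim\sqrt{n\log K}\,K^{d/2}\|A\|_{\rm op}$. So random signs cannot produce the scale $(n/d)^{(d-1)/2}$ when $d\ge3$.
\item \emph{Why this matters.} A lower bound for $\chi$ needs one specific multiplier that makes an operator of small norm large. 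Unlike the commutative case, there is no point evaluation turning the phase-aligned sum into $\sum_\beta|\widehat A(\beta)|$. So $\Lambda_{\mathcal B}$ does not bound $\chi_{\mathcal B}$ from below.
\item \emph{Your first attempt has a related defect.} A diagonal multiplier in diagonal-basis coordinates becomes the non-diagonal coefficient map $U^{\otimes d}\operatorname{diag}(\varepsilon)(U^*)^{\otimes d}$ in $\mathcal B$-coordinates. No comparison of coefficient $\ell_p$-norms controls its operator-norm behaviour, so the asserted $(K^2-1)^d$ loss for multipliers is unfounded.
\end{itemize}

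The paper supplies an operator-norm transfer. Let $R_{\mathcal B}$ be the Hilbert--Schmidt unitary sending the completed diagonal basis to $\mathcal B$, and write $R_{\mathcal B}=E+\Delta_1+i\Delta_2$, where $E$ is the trace expectation and the $\Delta_j$ are hermiticity preserving. The maps $E+s\Delta_1+t\Delta_2$ are unital completely positive for $|s|,|t|\le 1/(2K^2)$. Their $n$-fold tensor powers applied to $A\in\mathcal H_{d,K}(n)$ are homogeneous of degree $d$ in $(s,t)$, and Bernstein--Walsh passes from real $t$ to $t=i$. This shows that $R_{\mathcal B}^{\otimes n}$ restricted to $\mathcal H_{d,K}(n)$, and its inverse, have norm at most $(2(1+\sqrt2)K^2)^d$. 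The result is an isomorphic copy of $B_d(C_K^n)$ spanned by the genuine subfamily $\{B_{m_1}\otimes\cdots\otimes B_{m_n}\}$ of $\mathcal B$-tensors. Character multipliers therefore extend by zero to $\mathcal B$-multipliers, giving $\chi_{\mathcal B}\ge(2(1+\sqrt2)K^2)^{-2d}\chi(B_d(C_K^n))$.

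Alternatively, your remark about support-constant multipliers can be made into a proof, but not as stated.
\begin{itemize}
\item A multiplier $\sum_S\varepsilon_SP_S$, with $P_S$ the projection onto $\mathcal H_S$, is diagonal in every tensor-product basis and intertwines with $J$. So no change of coordinates is needed at all.
\item What you would then have to prove is that the commutative lower bound is already attained by multipliers depending only on the support. The cited result concerns arbitrary character multipliers, so this needs its own argument.
\item One way is a Kahane--Salem--Zygmund bound of order $\bigl(n\log(d+1)\binom nd\bigr)^{1/2}$ for a suitably signed sum $\sum_{|S|=d}\pm\prod_{j\in S}x_j$. Compare it with the value $\binom nd$ of the aligned sum at $(1,\ldots,1)$.
\end{itemize}
Neither of these two routes appears in your proposal.
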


\begin{proof}
\medskip
\noindent\emph{Upper bound.}
By Theorem~\ref{thm:local-BH},
\[
\|\widehat A_{\mathcal B}\|_{p_d}
\le
c_{\rm BH}(K)^d\|A\|_{\rm op},
\qquad
p_d=\frac{2d}{d+1}.
\]
Since \(\widehat A_{\mathcal B}\) has at most
\(M_{d,K,n}=\binom nd(K^2-1)^d\) nonzero coordinates, H\"older's inequality
gives
\[
\|\widehat A_{\mathcal B}\|_1
\le
M_{d,K,n}^{1-1/p_d}
\|\widehat A_{\mathcal B}\|_{p_d}
=
M_{d,K,n}^{(d-1)/(2d)}
\|\widehat A_{\mathcal B}\|_{p_d}.
\]
Using \(\binom nd\le(en/d)^d\), we obtain
\[
\Lambda_{\mathcal B}(\mathcal H_{d,K}(n))
\le
\bigl(c_{\rm BH}(K)\sqrt{e(K^2-1)}\bigr)^d
\left(\frac nd\right)^{(d-1)/2}.
\]
Moreover, normalized Hilbert--Schmidt orthonormality implies
\(\|B_\mu\|_{\rm op}\le\sqrt K\), and therefore
\(\|B_\beta\|_{\rm op}\le K^{d/2}\) whenever
\(|\supp\beta|=d\).  Hence, for every family
\((\varepsilon_\beta)\) with \(|\varepsilon_\beta|\le1\),
\[
\left\|
\sum_{|\supp\beta|=d}
\varepsilon_\beta
\widehat A_{\mathcal B}(\beta)B_\beta
\right\|_{\rm op}
\le
K^{d/2}\|\widehat A_{\mathcal B}\|_1.
\]
Thus
\[
\chi_{\mathcal B}(\mathcal H_{d,K}(n))
\le
K^{d/2}
\Lambda_{\mathcal B}(\mathcal H_{d,K}(n)),
\]
which proves the required upper bound after enlarging the constant depending
only on \(K\).

\medskip
\noindent\emph{Lower bound.}
We use the commutative spherical corner from
Lemma~\ref{lem:spherical-corners}, transported to coordinates adapted to
\(\mathcal B\).
Complete the diagonal orthonormal family
\(\{I,D_1,\ldots,D_{K-1}\}\) from \eqref{eq:Dm-definition} to a normalized
Hilbert--Schmidt orthonormal basis
\[
\mathcal D=\{I,D_1,\ldots,D_{K^2-1}\},
\]
and let
\[
R_{\mathcal B}:M_K(\mathbb C)\longrightarrow M_K(\mathbb C)
\]
be the Hilbert--Schmidt unitary determined by
\[
R_{\mathcal B}(I)=I,
\qquad
R_{\mathcal B}(D_\mu)=B_\mu
\quad
(1\le\mu\le K^2-1).
\]
We claim that, on exact support,
\begin{equation}\label{eq:uniform-exact-support-change}
\|R_{\mathcal B,d,n}\|,
\quad
\|R_{\mathcal B,d,n}^{-1}\|
\le
A_K^d,
\qquad
R_{\mathcal B,d,n}
:=
R_{\mathcal B}^{\otimes n}\big|_{\mathcal H_{d,K}(n)},
\end{equation}
where one may take
\[
A_K=2(1+\sqrt2)K^2.
\]
To prove this, let
\[
\tau_K(A)=K^{-1}\tr A,
\qquad
E(A)=\tau_K(A)I,
\qquad
S_{\mathcal B}=R_{\mathcal B}-E.
\]
Then \(S_{\mathcal B}\) vanishes on \(\mathbb CI\), is Hilbert--Schmidt
unitary on \(M_K^0\), and has Hilbert--Schmidt operator norm one.  For a
linear map \(T\), put
\[
T^\sharp(A)=T(A^*)^*,
\qquad
\Delta_1=\frac{S_{\mathcal B}+S_{\mathcal B}^\sharp}{2},
\qquad
\Delta_2=\frac{S_{\mathcal B}-S_{\mathcal B}^\sharp}{2i}.
\]
Both \(\Delta_1\) and \(\Delta_2\) are hermiticity preserving, vanish at
\(I\), and have Hilbert--Schmidt operator norm at most one.
Let
\[
C_T=\sum_{r,s=1}^K E_{rs}\otimes T(E_{rs})
\]
be the Choi matrix of \(T\).  With \(\|\cdot\|_F\) denoting the
unnormalized Frobenius norm,
\[
\|C_{\Delta_j}\|_{\rm op}
\le
\|C_{\Delta_j}\|_F
=
\left(
\sum_{r,s=1}^K
\|\Delta_j(E_{rs})\|_F^2
\right)^{1/2}
\le K,
\qquad j=1,2.
\]
Since \(C_E=K^{-1}I_{K^2}\), the map
\[
\Phi_{s,t}:=E+s\Delta_1+t\Delta_2
\]
is unital completely positive whenever
\[
|s|,|t|\le\delta_K,
\qquad
\delta_K:=\frac1{2K^2}.
\]
Consequently,
\[
\|\Phi_{s,t}^{\otimes n}\|_{\rm op\to op}=1
\]
throughout this square.
For \(A\in\mathcal H_{d,K}(n)\), the polynomial
\[
F_A(s,t):=\Phi_{s,t}^{\otimes n}(A)
\]
is homogeneous of degree \(d\).  Hence, for \(-1\le t\le1\),
\[
\delta_K^d\|F_A(1,t)\|_{\rm op}
=
\|F_A(\delta_K,\delta_Kt)\|_{\rm op}
\le
\|A\|_{\rm op}.
\]
Applying the scalar Bernstein--Walsh inequality to
\(\varphi(\delta_K^dF_A(1,z))\), with \(\|\varphi\|\le1\), and evaluating
at \(z=i\), yields
\[
\delta_K^d\|F_A(1,i)\|_{\rm op}
\le
(1+\sqrt2)^d\|A\|_{\rm op}.
\]
Since \(\Phi_{1,i}=R_{\mathcal B}\), this proves
\[
\|R_{\mathcal B,d,n}\|
\le
[2(1+\sqrt2)K^2]^d.
\]
The inverse coordinate change is of the same form, so the same estimate
holds for \(R_{\mathcal B,d,n}^{-1}\), proving
\eqref{eq:uniform-exact-support-change}.
Let now
\[
J:B_d(C_K^n)\longrightarrow\mathcal H_{d,K}(n)
\]
be the isometric embedding from Lemma~\ref{lem:spherical-corners}, and put
\[
J_{\mathcal B}:=R_{\mathcal B,d,n}\circ J.
\]
For every character \(\chi_m\) with \(|\supp(m)|=d\),
\[
J_{\mathcal B}(\chi_m)
=
B_{m_1}\otimes\cdots\otimes B_{m_n},
\qquad B_0=I.
\]
Thus the range of \(J_{\mathcal B}\) is spanned by a genuine subfamily of
\(\mathcal B_{d,n}\), and
\[
\|J_{\mathcal B}\|,
\quad
\|J_{\mathcal B}^{-1}\|
\le
A_K^d.
\]
By \cite[Theorem~3.4]{DGMMM-support}, the coefficient one-norm and
unconditional constants of the character basis of \(B_d(C_K^n)\) are
bounded below by
\[
a(K)^d
\left(\frac nd\right)^{(d-1)/2}
\]
for some \(a(K)>0\).
If \(A=J_{\mathcal B}f\), then the \(\mathcal B\)-coefficient one-norm of
\(A\) equals the character coefficient one-norm of \(f\), while
\[
\|A\|_{\rm op}\le A_K^d\|f\|_\infty.
\]
Therefore
\[
\Lambda_{\mathcal B}(\mathcal H_{d,K}(n))
\ge
A_K^{-d}a(K)^d
\left(\frac nd\right)^{(d-1)/2}.
\]
For unconditionality, let \(T_\varepsilon\) be a diagonal multiplier on
the character basis of \(B_d(C_K^n)\), and extend the same multipliers by
zero to the remaining elements of \(\mathcal B_{d,n}\), obtaining a
multiplier \(M_\varepsilon\) on \(\mathcal H_{d,K}(n)\).  On the range of
\(J_{\mathcal B}\),
\[
M_\varepsilon J_{\mathcal B}
=
J_{\mathcal B}T_\varepsilon,
\]
and hence
\[
\|T_\varepsilon\|
\le
\|J_{\mathcal B}^{-1}\|
\|M_\varepsilon\|
\|J_{\mathcal B}\|
\le
A_K^{2d}\|M_\varepsilon\|.
\]
Taking suprema gives
\[
\chi_{\mathcal B}(\mathcal H_{d,K}(n))
\ge
A_K^{-2d}a(K)^d
\left(\frac nd\right)^{(d-1)/2}.
\]
Combining the upper and lower estimates and enlarging the constant depending
only on \(K\) proves \eqref{eq:l1-chi-scale}.
\end{proof}

\subsection{Basis-independent coefficient compressibility}
\label{subsec:coefficient-compressibility}
The coefficient geometry developed above has a complementary sparsity
consequence.  While the preceding subsection determines the sharp
\(\ell_1\)-scale, the Bohnenblust--Hille estimate controls the decay of
individual coefficients and hence yields quantitative sparse
approximation in every tensor-product Hilbert--Schmidt orthonormal basis.

Fix a normalized Hilbert--Schmidt orthonormal local operator basis
\(
\mathcal B=\{I,B_1,\ldots,B_{K^2-1}\}
\)
and let \(H\in\mathcal H_{\le d,K}(n)\).  Rearrange the moduli of the
coefficients of \(H\) in \(\mathcal B^{\otimes n}\) in nonincreasing
order,
\[
h^*_{1,\mathcal B}\ge h^*_{2,\mathcal B}\ge\cdots\ge0.
\]
If \(C_{\le d,K}\) denotes the dimension-free constant in the
Bohnenblust--Hille inequality for the full \(d\)-local space, then
\[
h^*_{j,\mathcal B}
\le
C_{\le d,K}\|H\|_{\op}\,j^{-1/p_d},
\qquad
p_d=\frac{2d}{d+1}.
\]
Indeed, this is the standard weak-\(\ell_{p_d}\) consequence of the
estimate
\[
\|\widehat H_{\mathcal B}\|_{\ell_{p_d}}
\le
C_{\le d,K}\|H\|_{\op}.
\]
Consequently, for every \(\tau>0\),
\[
\#\bigl\{
\beta:
|\widehat H_{\mathcal B}(\beta)|\ge\tau
\bigr\}
\le
\left(
\frac{C_{\le d,K}\|H\|_{\op}}{\tau}
\right)^{p_d}.
\]
Thus, for fixed \(K\) and \(d\), the number of coefficients above a fixed
operator-norm-relative threshold is independent of the number \(n\) of
qudits and of the chosen local orthonormal basis.

Let \(H_{s,\mathcal B}\) be obtained by retaining the \(s\) largest
coefficients of \(H\) in \(\mathcal B^{\otimes n}\).  With the normalized
Hilbert--Schmidt norm
\[
\|A\|_2
=
\bigl(K^{-n}\tr(A^*A)\bigr)^{1/2},
\]
orthonormality gives
\[
\|H-H_{s,\mathcal B}\|_2^2
=
\sum_{j>s}(h^*_{j,\mathcal B})^2.
\]
Since
\[
\sum_{j>s}j^{-(d+1)/d}
\le
d\,s^{-1/d},
\]
we obtain
\[
\|H-H_{s,\mathcal B}\|_2
\le
\sqrt d\,
C_{\le d,K}\|H\|_{\op}\,
s^{-1/(2d)}.
\]
Hence bounded \(d\)-local operators admit dimension-free sparse
approximations in every tensor-product Hilbert--Schmidt orthonormal
basis.  In Heisenberg--Weyl coordinates this is precisely sparse
generalized-Pauli approximation.

\subsection{LCU normalization and unitary compression}
\label{subsec:lcu-block-encoding}
We now pass to unitary tensor-product coordinates, where the coefficient
one-norm becomes the normalization parameter of the canonical
linear-combination-of-unitaries (LCU) representation.

We specialize to a normalized Hilbert--Schmidt orthonormal local
basis
\[
\mathcal U
=
\{U_0=I,U_1,\ldots,U_{K^2-1}\}
\]
consisting of unitary matrices.  Its tensor powers are again unitary.
For
\[
\beta=(\beta_1,\ldots,\beta_n)
\in
\{0,\ldots,K^2-1\}^n,
\]
write
\[
U_\beta
:=
U_{\beta_1}\otimes\cdots\otimes U_{\beta_n}.
\]
Every \(H\in M_K(\mathbb C)^{\otimes n}\) has a unique expansion
\[
H
=
\sum_\beta h_\beta U_\beta,
\]
where
\[
h_\beta
=
\langle H,U_\beta\rangle_2
=
K^{-n}\tr(HU_\beta^*).
\]
We denote its coefficient vector by
\[
\widehat H_{\mathcal U}
:=
(h_\beta)_\beta.
\]
Assume now that \(H=H^*\) and \(H\in\mathcal H_{\le d,K}(n)\).
For every nonzero coefficient \(h_\beta\), absorb its phase into the
corresponding unitary and put
\[
V_\beta
:=
\frac{h_\beta}{|h_\beta|}U_\beta.
\]
Then \(V_\beta\) is unitary and
\[
H
=
\sum_{\beta\in\Gamma}|h_\beta|V_\beta,
\qquad
\Gamma
:=
\{\beta:h_\beta\neq0\}.
\]
The associated LCU normalization is therefore
\[
\lambda_{\mathcal U}(H)
:=
\sum_{\beta\in\Gamma}|h_\beta|
=
\|\widehat H_{\mathcal U}\|_1.
\]
We next describe the functional-analytic content of the standard
PREPARE--SELECT construction.  Let \(\mathcal H\) denote the system
Hilbert space and consider the enlarged Hilbert space
\[
\mathcal K
:=
\ell_2(\Gamma)\otimes\mathcal H.
\]
Let \((e_\beta)_{\beta\in\Gamma}\) be the canonical orthonormal basis of
\(\ell_2(\Gamma)\), and define
\[
\xi
:=
\sum_{\beta\in\Gamma}
\sqrt{\frac{|h_\beta|}{\lambda_{\mathcal U}(H)}}\,e_\beta.
\]
Since
\[
\sum_{\beta\in\Gamma}
\frac{|h_\beta|}{\lambda_{\mathcal U}(H)}
=
1,
\]
the vector \(\xi\) has norm one.
Define the block-diagonal operator
\[
S
:=
\sum_{\beta\in\Gamma}
|e_\beta\rangle\langle e_\beta|
\otimes V_\beta
=
\bigoplus_{\beta\in\Gamma}V_\beta
\qquad\text{on }\mathcal K.
\]
Since every \(V_\beta\) is unitary, \(S\) is unitary.  Define further
the isometric embedding
\[
J:\mathcal H\longrightarrow\mathcal K,
\qquad
Jx:=\xi\otimes x.
\]
Then
\[
J^*J=I_{\mathcal H},
\]
and a direct computation gives
\[
\begin{aligned}
J^*SJ
=
\sum_{\beta\in\Gamma}
|\langle e_\beta,\xi\rangle|^2V_\beta =
\sum_{\beta\in\Gamma}
\frac{|h_\beta|}{\lambda_{\mathcal U}(H)}V_\beta
=
\frac{H}{\lambda_{\mathcal U}(H)}.
\end{aligned}
\]
Thus \(H/\lambda_{\mathcal U}(H)\) is the compression of the unitary
operator \(S\) to the isometrically embedded copy \(J\mathcal H\) of
the original system space.

This identity is the functional-analytic core of PREPARE--SELECT.
To recover the usual formulation, fix a unit vector
\(e_0\in\ell_2(\Gamma)\) and choose a unitary
\[
P:\ell_2(\Gamma)\longrightarrow\ell_2(\Gamma) \quad \text{such that $Pe_0=\xi.$}
\]
In quantum-information terminology, \(P\) is the
\textsc{Prepare} operator, while \(S\) is the \textsc{Select} operator.
If
\[
J_0:\mathcal H\longrightarrow\mathcal K,
\qquad
J_0x=e_0\otimes x,
\]
and
\[
W
:=
(P^*\otimes I_{\mathcal H})
S
(P\otimes I_{\mathcal H}),
\]
then \(W\) is unitary and
\begin{equation*}\label{eq:block-encoding}
J_0^*WJ_0
=
\frac{H}{\lambda_{\mathcal U}(H)}.
\end{equation*}
Equivalently,
\[
(\langle e_0|\otimes I_{\mathcal H})
W
(|e_0\rangle\otimes I_{\mathcal H})
=
\frac{H}{\lambda_{\mathcal U}(H)}.
\]
With respect to the orthogonal decomposition
\[
\mathcal K
=
J_0\mathcal H
\oplus
(J_0\mathcal H)^\perp,
\]
the unitary \(W\) therefore has the block form
\[
W
=
\begin{pmatrix}
H/\lambda_{\mathcal U}(H) & *\\
* & *
\end{pmatrix}.
\]
In this sense \(W\) is a block encoding of
\(H/\lambda_{\mathcal U}(H)\).

The significance of this representation for Hamiltonian simulation is
that quantum circuits provide direct access to unitary transformations,
whereas a Hamiltonian \(H\) is in general not unitary.  This compression embeds the normalized Hamiltonian into a
unitary operator on a larger Hilbert space.
Qubitization and quantum
signal-processing methods can then be applied to this unitary encoding
to implement functions of \(H\), in particular the time evolution
\(e^{-itH}\).

The normalization parameter entering this construction is exactly
\(\lambda_{\mathcal U}(H)=\|\widehat H_{\mathcal U}\|_1\).  Hence the
coefficient estimates from Section~\ref{sec:coefficient-normalization}
translate directly into bounds for the normalization of the canonical
unitary-basis LCU encoding.

\begin{corollary}[Normalization scale for canonical unitary-basis LCU encodings]
\label{cor:lcu-normalization-scale}
For every fixed \(K\ge2\) there are constants \(c_+(K)\ge1\) and
\(c_-(K)>0\) such that, uniformly over all normalized
Hilbert--Schmidt orthonormal unitary local bases \(\mathcal U\), every
Hermitian \(H\in\mathcal H_{\le d,K}(n)\) satisfies
\[
\lambda_{\mathcal U}(H)
\le
c_+(K)^d
\left(\frac nd\right)^{(d-1)/2}
\|H\|_{\op}.
\]
Moreover, the power \((d-1)/2\) is sharp already on exact support:
for every \(1\le d\le n\) and every such \(\mathcal U\), there exists a
Hermitian \(H\in\mathcal H_{d,K}(n)\) such that
\[
\lambda_{\mathcal U}(H)
\ge
c_-(K)^d
\left(\frac nd\right)^{(d-1)/2}
\|H\|_{\op}.
\]
The constants may be chosen uniformly over \(\mathcal U\).
\end{corollary}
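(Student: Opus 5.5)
The proof splits into an upper bound on the full $d$-local space and a Hermitian lower bound on exact support.

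\textbf{Upper bound.} Let $H\in\mathcal H_{\le d,K}(n)$ be Hermitian and write $H=\sum_{r=0}^d H_r$ with $H_r\in\mathcal H_{r,K}(n)$. The polynomial argument in the proof of Theorem~\ref{thm:local-BH-leq-d} gives $\|H_r\|_{\rm op}\le 8^d\|H\|_{\rm op}$. The coefficient supports of the $H_r$ are disjoint, so $\lambda_{\mathcal U}(H)=\sum_r\|\widehat{H_r}_{\mathcal U}\|_1$. For $r\ge1$, the upper bound in Theorem~\ref{thm:l1-chi-scale} gives
\[
\|\widehat{H_r}_{\mathcal U}\|_1\le c(K)^r (n/r)^{(r-1)/2}\|H_r\|_{\rm op}.
\]
For $r=0$ the bound holds trivially with constant $1$. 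It remains to compare $(n/r)^{(r-1)/2}$ with $(n/d)^{(d-1)/2}$ for $r\le d\le n$. I would write
\[
\Bigl(\frac nr\Bigr)^{(r-1)/2}=\Bigl(\frac nd\Bigr)^{(r-1)/2}\Bigl(\frac dr\Bigr)^{(r-1)/2}.
\]
The first factor is at most $(n/d)^{(d-1)/2}$ because $n/d\ge1$. For the second, $(d/r)^{r}\le e^{d/e}$ by maximizing $r\log(d/r)$, so it is at most $e^{d}$. Summing at most $d+1\le 2^d$ terms absorbs everything into $c_+(K)^d$, for instance $c_+(K)=2\cdot 8e\max\{1,c(K)\}$. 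Uniformity in $\mathcal U$ is inherited from Theorem~\ref{thm:l1-chi-scale}.

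\textbf{Lower bound.} The only new point is Hermiticity. Theorem~\ref{thm:l1-chi-scale} supplies some $A\in\mathcal H_{d,K}(n)$, possibly non-Hermitian, with $\|\widehat A_{\mathcal U}\|_1\ge c(K)^{-d}(n/d)^{(d-1)/2}\|A\|_{\rm op}$. Write $A=X+iY$ with $X=(A+A^*)/2$ and $Y=(A-A^*)/(2i)$. The space $\mathcal H_{d,K}(n)$ is $*$-invariant, since $(M_K^0)^*=M_K^0$, so $X,Y\in\mathcal H_{d,K}(n)$ are Hermitian with $\|X\|_{\rm op},\|Y\|_{\rm op}\le\|A\|_{\rm op}$.

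By linearity of coefficients and the triangle inequality,
\[
\|\widehat A_{\mathcal U}\|_1\le\|\widehat X_{\mathcal U}\|_1+\|\widehat Y_{\mathcal U}\|_1.
\]
Hence one of $H\in\{X,Y\}$ satisfies $\|\widehat H_{\mathcal U}\|_1\ge\tfrac12\|\widehat A_{\mathcal U}\|_1\ge\tfrac12 c(K)^{-d}(n/d)^{(d-1)/2}\|H\|_{\rm op}$. This $H$ is nonzero because its $\ell_1$ norm is positive. Setting $c_-(K)=1/(2c(K))$ and using $\tfrac12\ge 2^{-d}$ gives the claim, uniformly in $\mathcal U$.

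\textbf{Main obstacle.} The step needing care is that the lower bound in Theorem~\ref{thm:l1-chi-scale} was proved for an arbitrary basis $\mathcal B$, without assuming unitarity. A unitary normalized Hilbert--Schmidt orthonormal basis is a special case, so the bound applies. The elementary comparison of $(n/r)^{(r-1)/2}$ across support levels in the upper bound also needs checking, but it costs only a factor exponential in $d$.
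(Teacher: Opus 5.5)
Your proposal is correct and follows the paper's proof. The Hermitian lower bound is exactly the paper's splitting $A=X+iY$ applied to an extremal element from Theorem~\ref{thm:l1-chi-scale}, whose lower bound holds for arbitrary, and hence unitary, bases. Your upper bound uses the same ingredients in a slightly different order: the paper applies the full $d$-local inequality of Theorem~\ref{thm:local-BH-leq-d} and then a single H\"older step over at most $(c(K)n/d)^d$ coefficients. You instead use the layer decomposition with the $8^d$ interval estimate, then the exact-support $\ell_1$ bound on each layer, plus the comparison $(n/r)^{(r-1)/2}\le e^d(n/d)^{(d-1)/2}$.
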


\begin{proof}
For exact support this is precisely the coefficient
$\ell_1$-normalization estimate of Theorem~\ref{thm:l1-chi-scale},
because of the identity
\[
\lambda_{\mathcal U}(H)
=
\|\widehat H_{\mathcal U}\|_1.
\]
For the full \(d\)-local space, combine the Bohnenblust--Hille estimate
from Theorem~\ref{thm:local-BH-leq-d} with H\"older's inequality over at
most
\[
\sum_{r=0}^d
\binom nr(K^2-1)^r
\le
\left(c(K)\frac nd\right)^d
\]
coefficients.
For the Hermitian lower bound, choose an almost extremal
\(A\in\mathcal H_{d,K}(n)\) for the coefficient
\(\ell_1\)-normalization and write
\(
A=H_1+iH_2,
\)
where \(H_1,H_2\) are Hermitian.  Then
\(
\|H_j\|_{\op}\le\|A\|_{\op},
\)
and, by linearity of the coefficient map,
\[
\|\widehat A_{\mathcal U}\|_1
\le
\|\widehat H_{1,\mathcal U}\|_1
+
\|\widehat H_{2,\mathcal U}\|_1.
\]
Thus one of the two Hermitian parts retains at least one half of the
extremal coefficient ratio.
\end{proof}

Given unit-cost oracle access to \textsc{Prepare} and \textsc{Select},
the qubitization framework of Low and Chuang~\cite[Corollary~16]{LowChuang} implies
that an LCU representation
\[
H=\sum_j \alpha_jV_j,
\qquad
\alpha:=\sum_j|\alpha_j|,
\]
allows one to simulate \(e^{-itH}\) to spectral-norm error
\(\varepsilon\) using
\[
O\left(
\alpha t+\log\frac1\varepsilon
\right)
\]
oracle queries.  Applying this with
\(\alpha=\lambda_{\mathcal U}(H)\) yields the following consequence.

\begin{corollary}[Qubitization from a unitary tensor-product basis]
\label{cor:qubitization-unitary-basis}
Assume unit-cost query access to the \textsc{Prepare} and
\textsc{Select} operators associated with the expansion of a Hermitian
\(H\in\mathcal H_{\le d,K}(n)\) in a normalized Hilbert--Schmidt
orthonormal unitary local basis \(\mathcal U\).  Then \(e^{-itH}\) can
be simulated to spectral-norm error \(\varepsilon\) using
\[
O\left(
c(K)^d
\left(\frac nd\right)^{(d-1)/2}
\|H\|_{\op}t
+
\log\frac1\varepsilon
\right)
\]
queries to the corresponding LCU oracles.
For \(\mathcal U\) equal to the Heisenberg--Weyl basis, this is the
canonical generalized-Pauli PREPARE--SELECT encoding.
\end{corollary}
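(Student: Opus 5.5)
The plan is to plug the LCU normalization bound of Corollary~\ref{cor:lcu-normalization-scale} into the Low--Chuang query bound quoted just before the statement. First, fix a Hermitian \(H\in\mathcal H_{\le d,K}(n)\) and a normalized Hilbert--Schmidt orthonormal unitary local basis \(\mathcal U\). Then write the canonical LCU representation \(H=\sum_{\beta\in\Gamma}|h_\beta|V_\beta\) constructed in Subsection~\ref{subsec:lcu-block-encoding}, with unitaries \(V_\beta=(h_\beta/|h_\beta|)U_\beta\). The associated \textsc{Prepare} and \textsc{Select} operators are exactly those for which query access is assumed. Their compression identity \(J_0^*WJ_0=H/\lambda_{\mathcal U}(H)\) shows that they form a genuine block encoding of \(H/\alpha\) with \(\alpha=\lambda_{\mathcal U}(H)=\|\widehat H_{\mathcal U}\|_1\).

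Second, apply \cite[Corollary~16]{LowChuang} to this representation. This simulates \(e^{-itH}\) to spectral-norm error \(\varepsilon\) with \(O(\alpha t+\log(1/\varepsilon))\) oracle queries. The only point to check is that the hypotheses there (an LCU with nonnegative weights and unitary summands, and a \textsc{Prepare} state with amplitudes \(\sqrt{\alpha_j/\alpha}\)) match the construction above. They do, because the phases were absorbed into \(V_\beta\) and \(\xi\) has exactly these amplitudes.

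Third, insert the upper bound of Corollary~\ref{cor:lcu-normalization-scale}: \(\alpha\le c_+(K)^d(n/d)^{(d-1)/2}\|H\|_{\op}\), uniformly over \(\mathcal U\). Renaming \(c(K):=c_+(K)\) gives the stated query count. Since the \(O\)-constant from Low--Chuang is absolute and \(c_+(K)\) is uniform in \(n\), \(d\) and \(\mathcal U\), the bound has the required uniformity. For the final sentence, recall that the Heisenberg--Weyl matrices \(W_{a,b}\) are unitary and form a normalized Hilbert--Schmidt orthonormal basis with \(W_{0,0}=I\). They are therefore an admissible choice of \(\mathcal U\), and the resulting PREPARE--SELECT pair is the generalized-Pauli one.

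No real obstacle arises; the argument is bookkeeping. The step needing the most care is matching normalization conventions. The Low--Chuang statement is phrased for \(\|H/\alpha\|\le1\) block encodings, and one must confirm that the oracle model, where \(W\) costs \(O(1)\) queries to \textsc{Prepare}, \(P^*\) and \textsc{Select}, is the one in which their query count is stated. Only a constant factor changes.
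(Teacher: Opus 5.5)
Your proposal is correct and is essentially the paper's argument: the paper likewise takes the canonical \textsc{Prepare}--\textsc{Select} encoding with $\alpha=\lambda_{\mathcal U}(H)=\|\widehat H_{\mathcal U}\|_1$, invokes the Low--Chuang bound $O(\alpha t+\log(1/\varepsilon))$, and inserts the upper estimate $\lambda_{\mathcal U}(H)\le c_+(K)^d(n/d)^{(d-1)/2}\|H\|_{\op}$ from Corollary~\ref{cor:lcu-normalization-scale}. The paper leaves your extra checks on oracle conventions implicit: phases absorbed into $V_\beta$, the amplitudes of $\xi$, and access to inverses such as $P^*$ (and \textsc{Select}$^{\dagger}$ when the $V_\beta$ are not Hermitian, as for Heisenberg--Weyl with $K\ge3$); these only affect absolute constants.
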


We emphasize that 
Corollary~\ref{cor:qubitization-unitary-basis} is a statement
about the normalization and query complexity of this particular
unitary-basis LCU representation.  It is not a lower bound for arbitrary
Hamiltonian-simulation algorithms or for arbitrary block encodings.
Additional structure of \(H\), a different decomposition, or a more
efficient access model may reduce the complexity.  Moreover, the gate
complexity includes the cost of implementing \textsc{Prepare} and
\textsc{Select} themselves.  What is sharp here is the worst-case
\(n\)-dependence of the coefficient normalization in the chosen
unitary tensor-product coordinates.

Finally, let \(\mathcal B=\{I,B_1,\ldots,B_{K^2-1}\}\) be an arbitrary,
not necessarily unitary, normalized Hilbert--Schmidt orthonormal local
basis.  Then
\[
\|B_\mu\|_{\op}\le\sqrt K.
\]
Hence \(B_\mu/\sqrt K\) is a contraction and, in finite dimensions,
belongs to the convex hull of the unitary group.  Choosing such a
unitary decomposition for every local basis element and tensorizing
over the active sites shows that an expansion
\[
H=\sum_\beta h_\beta B_\beta
\]
with support at most \(d\) can be converted into an LCU of tensor
products of local unitaries with normalization at most
\[
\sum_\beta
K^{|\supp\beta|/2}|h_\beta|
\le
K^{d/2}
\|\widehat H_{\mathcal B}\|_1.
\]
Thus the basis-independent coefficient estimates above also yield LCU
upper bounds for arbitrary local orthonormal bases, at the price of the
additional factor \(K^{d/2}\).  This last conversion is an existence
statement; its implementation cost depends on the chosen local unitary
decompositions.  Unitary bases, and in particular the
Heisenberg--Weyl basis, are distinguished because no such conversion
loss occurs.

\section*{Note added}
While this manuscript was being finalized, the authors became aware of the
recent preprint by Slote and Volberg \cite{SloteVolberg2026}, which proposes
polynomial Bohnenblust--Hille bounds in the commutative setting of products
of cyclic groups.
 We emphasize, however, that the noncommutative framework considered here is fundamentally different: in this case, the dependence on the interaction order cannot, in general, be subexponential.

\section*{Declaration on the use of generative AI}

During the preparation of this manuscript, the authors used ChatGPT (GPT-5.6 Sol, OpenAI) for language editing, revision of the manuscript, assistance with bibliographic preparation, and informal discussion and exploration of mathematical ideas. The authors take full responsibility for the content of the paper.

\section*{Acknowledgements}

The second author is deeply grateful to Andreas Defant for the opportunity to stay at his home in Oldenburg during a visit while working on this project, and would also like to warmly thank Ute for her wonderful hospitality and kindness.

\vspace{1.5em}
\noindent\textsc{Institut f\"ur Mathematik, Carl von Ossietzky Universit\"at,
26111 Oldenburg, Germany}\\
\textit{Email address:} \texttt{defant@mathematik.uni-oldenburg.de}

\medskip
\noindent\textsc{Universidad Torcuato Di Tella. Departamento de Matem\'atica y
Estad\'istica. IMAS--CONICET, Av. Figueroa Alcorta 7350 (1428), Buenos Aires,
Argentina}\\
\textit{Email address:} \texttt{daniel.galicer@utdt.edu}

\end{document}